\DeclareMathOperator*{\argmin}{arg\,min}
\DeclareMathOperator*{\spa}{span}
\DeclareMathOperator*{\sinc}{sinc}
\DeclareMathOperator*{\range}{Ran}
\newcommand{\Ndet}{N_{\rm det}}
\newcommand{\Nt}{N_t}
\newcommand{\Nx}{N_x}
\newcommand{\Nrad}{N_r}
\newcommand{\MATLAB}{\textsc{Matlab}\xspace}
\title{A Galerkin least squares approach for\\photoacoustic tomography\footnote{\textbf{Funding:}  Authors gratefully acknowledge the support of the Tyrolean Science Fund (TWF).}}
\author{Johannes Schwab\\Sergiy Pereverzyev Jr.\\
Markus Haltmeier\footnote{Corresponding author, {\tt markus.haltmeier@uibk.ac.at}.}}
\date{Department of Mathematics
University of Innsbruck\\
Technikerstrasse 13, A-6020 Innsbruck, Austria}
\newcommand{\edot}{\,\cdot\,}
\newcommand{\C}{\mathbb{C}}
\newcommand{\R}{\mathbb{R}}
\newcommand{\Z}{\mathbb{Z}}
\newcommand{\N}{\mathbb{N}}
\newcommand{\la}{\lambda}
\newcommand{\La}{\Lambda}
\newcommand{\eps}{\epsilon}
\newcommand{\trans}{\mathsf{T}}
\newcommand{\ball}{B_R(0)}
\newcommand{\xx}{x}
\newcommand{\Ao}{\mathbf A}
\newcommand{\Io}{\mathbf I}
\newcommand{\Bo}{\mathbf B}
\newcommand{\Mo}{\mathbf M}
\newcommand{\Wo}{\mathbf W}
\newcommand{\Po}{\mathbf P}
\newcommand{\Qo}{\mathbf Q}
\newcommand{\V}{\mathcal V}
\newcommand{\X}{\mathcal X}
\newcommand{\Y}{\mathcal Y}
\newcommand\rmd{\mathrm{d}}
\newcommand\ds{\rmd s}
\newcommand{\ga}{\gamma}
\newcommand{\ph}{\varphi}
\newcommand{\dd}{d}
\newcommand{\EE}{\mathcal{E}}
\newcommand{\RR}{\mathcal{R}}
\newcommand{\sph}{\mathbb S}
\newtheorem{theorem}{Theorem}
\newtheorem{lemma}[theorem]{Lemma}
\newtheorem{remark}[theorem]{Remark}
\newcommand*\bigcdot{\mathpalette\bigcdot@{.6}}
\newcommand*\bigcdot@[2]{\mathbin{\vcenter{\hbox{\scalebox{#2}{$\m@th#1\bullet$}}}}}
\newcommand\ip[2]{\langle {#1},  {#2} \rangle}
\newcommand\inner[2]{{#1}\bigcdot {#2}}
\numberwithin{equation}{section}
\numberwithin{figure}{section}
\numberwithin{table}{section}
\numberwithin{theorem}{section}
\newcommand{\bkl}[1]{\left(#1\right)}
\newcommand{\kl}[1]{(#1)}
\newcommand{\set}[1]{\{#1\}}
\newcommand{\abs}[1]{\lvert#1\rvert}
\newcommand{\norm}[1]{\lVert#1\rVert}
\begin{document}

\maketitle

% REQUIRED
\begin{abstract}
The development of fast and accurate image reconstruction algorithms is a central
aspect of computed tomography. In this paper we address this issue for
photoacoustic computed tomography in circular geometry. We investigate  the
Galerkin least squares method for that purpose. For approximating the function to be
recovered  we use subspaces of translation invariant spaces generated by a
single function.
This includes many systems that have  previously been  employed in PAT such
as generalized  Kaiser-Bessel basis functions or the natural pixel basis.
By exploiting an isometry property of the forward problem we are able to efficiently
set up the Galerkin equation  for a wide class of generating  functions  and devise
efficient   algorithms for its solution. We establish a convergence analysis and
present numerical simulations that demonstrate the efficiency and accuracy
of the derived algorithm.

\medskip
\noindent
\textbf{Key words:}
Photoacoustic imaging, computed tomography,  Galerkin least squares method, Kaiser-Bessel functions, Radon transform, least-squares approach.

\medskip
\noindent
\textbf{AMS subject classification:}
65R32, 45Q05, 92C55.
\end{abstract}

\section{Introduction}
\label{sec:intro}

Photoacoustic tomography (PAT) is an emerging non-invasive tomographic imaging
modality that allows high resolution imaging with high  contrast.
Applications  are ranging from breast screening in patients to whole body  imaging
of  small animals  \cite{beard2011biomedical,ntziachristos2005looking,KruKisReiKruMil03,wang2012photoacoustic}.
The basic principle of PAT is as follows. If a semitransparent sample is illuminated with a short pulse, then parts of the optical energy are absorbed inside the sample (see Figure~\ref{fig:pat}). This causes a rapid thermoelastic expansion, which in turns  induces an acoustic pressure wave. The pressure wave is measured outside  of the sample and used for reconstructing an image of the interior.

\begin{figure}[thb!]
\centering
  \includegraphics[width=0.8\textwidth]{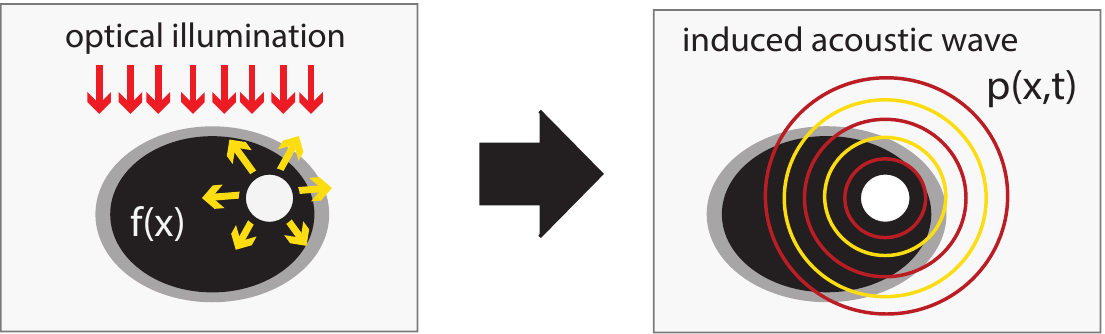}
\caption{\label{fig:pat}
\textsc{Basic principle of PAT.}  Pulsed optical illumination and  subsequent thermal expansion induces  an acoustic pressure wave. The pressure wave is measured outside of the object and used to obtain an image of the interior.}
\end{figure}

In this paper we work with the standard model of PAT, where the  acoustic pressure  $p \colon \R^\dd \times \kl{0, \infty} \to \R$
solves the  standard wave equation
\begin{equation} \label{eq:wave-fwd}
	\left\{ \begin{aligned}
	&\partial_t^2  p (x,t) - \Delta_x p(x,t)
	=
	0 \,,
	 && \text{ for }
	\kl{x,t} \in
	\R^\dd \times \kl{0, \infty} \,,
	\\
	&p\kl{x,0}
	=
	f(x)  \,,
	&& \text{ for }
	x  \in \R^\dd \,,
	\\
	&\partial_t
	p\kl{x,0}
	=0 \,,
	&& \text{ for }
	x  \in \R^\dd \,.
\end{aligned} \right.
\end{equation}
Here $d$ is the spatial dimension, $f \colon \R^\dd \to \R$ the absorbed  energy distribution, $\Delta_x$ the spatial Laplacian, and
$\partial_t$ the derivative with respect to the time variable $t$. The speed of sound is assumed to be constant and has been rescaled to
one. We further suppose that $f$ vanishes outside an open  ball $\ball \subseteq \R^\dd$.
The goal of PAT is to recover the function $f$ from measurements of $\Wo f \coloneqq p|_{ \partial \ball \times (0, \infty) }$.
Evaluation of $\Wo$  is referred to as  the direct problem
and the problem of reconstructing  $f$ from  (possibly approximate) knowledge
of $\Wo f $ as  the inverse problem of PAT. The cases $d=3$ and $d=2$ are of actual relevance in  PAT (see \cite{kuchment2011mathematics,BurBauGruHalPal07}).

In the recent years several solution methods for the inverse problem of PAT have been derived. These approaches can
be classified in direct methods on the one and iterative (model based) approaches on the other hand.
Direct methods are based on explicit solutions for inverting the operator $\Wo$ that can be implemented numerically.
This includes time reversal (see  \cite{burgholzer2007exact,Hristova2008,FinPatRak04,nguyen2016dissipative,Treeby10}), Fourier domain  algorithms (see \cite{AgrKuc07,HalSchBurNusPal07,Kun07b,palamodov2012uniform,salman14inversion,xu2002timedomain}),
and explicit reconstruction formulas  of the back-projection
type (see \cite{ansorg2013summability,FinHalRak07,FinPatRak04,haltmeier13inversion,haltmeier14universal,HalPer15a,HalPer15b,Kun07a,kunyansky2015inversion,natterer2012photo,nguyen2009family,xu2005universal}). Model based iterative approaches, on the other hand, are based on a discretization of the forward problem  together with numerical solution methods for solving the
resulting system of linear equations. Existing iterative approaches  use
interpolation based discretization (see \cite{DeaBueNtzRaz12,PalNusHalBur07b,PalViaPraJac02,RosNtzRaz13,zhang2009effects})
or approximation using radially symmetric basis functions  (see \cite{wang2014discrete,wang2012investigation}). Recently, also iterative schemes using
a continuous domain formulation of the adjoint have been studied, see \cite{arridge2016adjoint,belhachmi2016direct,haltmeier2016iterative}.
Direct methods are numerically efficient and robust and have similar
complexity as numerically evaluating the forward problem.
Iterative methods typically are slower  since the forward and adjoint problems have to be evaluated repeatedly. However, iterative methods have the advantage of being  flexible as one  can  easily add  regularization terms and incorporate measurement characteristics such as finite sampling, finite bandwidth and finite detectors size (see \cite{DeaBueNtzRaz12,haltmeier2010spatial,RoiEtAl14,wang2011imaging,wang2014discrete,XuWan03}). Additionally,  iterative methods tend to be more accurate
in the case of noisy data.

\subsection{Proposed Galerkin least squares approach}

In this paper we develop a  Galerkin approach for PAT that combines
advantages of  direct and  model based approaches. Our method  comes with a clear convergence theory, sharp error estimates and an efficient implementation.
The  Galerkin least squares method for $\Wo f = g$  consists in finding a minimizer of the
restricted least squares  functional,
\begin{equation} \label{eq:LN}
	f_N  \coloneqq \argmin \set{ \norm{\Wo h - g } \mid h  \in  \X_N }  \,,
\end{equation}
where $\X_N$ is a  finite dimensional reconstruction space   and  $\norm{\edot} $
an appropriate Hilbert space norm.
If $(\varphi^k_N)_{k \in\La_N}$ is a basis of $\X_N$  then
$f_N = \sum_{k \in\La_N} c_{N,k}\varphi^k_N$, where $c_N=(c_{N,k})_{k\in \La_N} $
is the unique solution
of the  Galerkin equation
\begin{equation}\label{eq:lsgal}
\Ao_N c_N = (\ip{\Wo\ph^k_N}{g})_{k \in\La_N}
\qquad \text{ with }  \; \Ao_N \coloneqq (\ip{\Wo \varphi^k_N}{\Wo \varphi_N^\ell})_{k,\ell \in \La_N}  \,.
\end{equation}
We call the matrix $\Ao_N$  the (discrete) imaging matrix.

In general, both the computation of the  imaging matrix as well as the  solution
of the Galerkin equation can be  numerically expensive.
In this paper we demonstrate that for the inverse problem of PAT,
both issues  can be efficiently implemented.
These  observations are based on the  following:
\begin{itemize}
\item \textsc{Isometry property.}
Using the isometry property of \cite{FinHalRak07,FinPatRak04}
one shows that the entries of the system matrix are given by
$\tfrac{R}{2}\ip{ \varphi^k_N}{ \varphi_N^\ell}_{L^2}$;
see~Theorem \ref{thm:leastsquares}.

\item \textsc{Shift invariance.} If, additionally, we take the basis functions
$\varphi^k_N$ as translates of a single
generating function $\varphi \in L^2(\R^\dd)$, then
$\ip{ \varphi^k_N}{ \varphi_N^\ell}_{L^2} = \ip{ \varphi^0_N}{ \varphi_N^{k-\ell}}_{L^2}$
for $k, \ell \in \La_N \subseteq \Z^\dd$ .
\end{itemize}
Consequently only $2^\dd \abs{\La_N}$ inner products have to be computed in our Galerkin
approach  opposed to   $\abs{\La_N}(\abs{\La_N}+1)/2 = \mathcal{O}(\abs{\La_N}^2)$ inner products required in the general case.
Further, the resulting shift invariant structure of the system matrix allows to
efficiently solve the Galerkin equation.

Note that  shift  invariant  spaces  are frequently employed in  computed tomography and include  splines spaces,   spaces of bandlimited functions, or spaces generated by
Kaiser-Bessel functions. In this paper we will especially use Kaiser-Bessel functions which are often considered as the most suitable basis for computed tomography       \cite{herman2015basis,lewitt1992alternatives,matej1996practical,nilchian2015optimized}. For the use in PAT they have first been proposed in~\cite{wang2014discrete}. We are not aware of existing Galerkin approaches for tomographic image reconstruction exploiting  isometry and shift invariance. However, we anticipate that similar methods can be derived for other tomographic problems, where an isometry property
is known (such as X-ray based CT~\cite{Kuc14,Nat01}).
We further note that our approach has close relations to the method of approximate
inverse, which has frequently been applied to computed tomography
\cite{HalSchuSch05,Lou96,LouMaa90,LouSchu96,rieder2000approximate,rieder2004approximate}. Instead of approximating the unknown function
using a prescribed reconstruction space,  the method of approximate inverse recovers prescribed moments of the unknown and is somehow dual to the    Galerkin approach.

\subsection{Outline}
The rest of this article is organized as follows. In Section~\ref{sec:galerkinPAT}
we apply the Galerkin least squares method for the inverse problem of PAT.
By using  the isometry property we derive a simple characterization of the
Galerkin equation in  Theorem~\ref{thm:leastsquares}.
We derive a convergence and stability result for the  Galerkin least squares
method applied to PAT (see Theorem~\ref{thm:convG}).
In Section~\ref{sec:spaces} we study shift invariant spaces for computed tomography.
As the main results in that section  we derive an estimate for  the
$L^2$-approximation  error using elements from the shift invariant space.
In Section~\ref{sec:galerkinshift}  we  present details for the  Galerkin approach
using subspaces of shift invariant  spaces.
In  Section \ref{sec:num} we present numerical studies using our
Galerkin approach and  compare it to related approaches in the literature.
The paper concludes with a conclusion and a short outlook in
Section~\ref{sec:conclusion}.

\section{Galerkin approach for PAT}
\label{sec:galerkinPAT}

Throughout the following, suppose $d\geq 2$, let   $\ball \coloneqq \set{\xx \in \R^\dd \mid  \norm{\xx} < R }$ denote the open ball with radius $R$ centered at the origin, and let
$ L^2_R(\R^\dd) \coloneqq \set{ f \in L^2\kl{\R^\dd} \mid  f(x) = 0 \text { for } \xx \in \R^\dd \setminus \ball}$ denote the Hilbert space of all square integrable functions which vanish outside $\ball$. For two measurable functions $ g_1,  g_2 \colon \partial \ball  \times (0, \infty) \to \R$ we write
 \begin{equation} \label{eq:tinner}
 	\ip{g_1}{g_2}_t \coloneqq
	\int_{\partial \ball }
	\int_0^\infty g_1(z,t)g_2(z,t)  \, t \, \rmd t \,  \ds(z)  \,,
\end{equation}
provided that the integral exists. We further denote by  $\Y$ the Hilbert space of
all functions $ g \colon \partial \ball  \times (0, \infty) \to \R$ with $\norm{g}_t^2 \coloneqq \ip{g}{g}_t < \infty$.

\subsection{PAT and the  wave equation}

For initial data $ f  \in C_c^{[d/2]+2}(\ball)$ consider
the wave equation  \eqref{eq:wave-fwd}.
The solution $p \colon \R^\dd \times (0, \infty) \to \R$  of
\eqref{eq:wave-fwd} restricted to the boundary of $\ball$  is denoted by
$\bar \Wo f \colon \partial \ball \times  (0, \infty) \to \R$.
The associated operator is defined by $\bar  \Wo \colon   C_c^{[d/2]+2}(\ball) \subseteq L^2_R(\R^\dd) \to \Y \colon f \mapsto \bar \Wo f$

\begin{lemma}[Isometry and continuous extension \label{lem:iso} of $\bar  \Wo$]\mbox{}
\begin{enumerate}
\item\label{lem:iso1} For all $f_1, f_2 \in  C_c^{[d/2]+2}(\ball)$ we have
$\ip{f_1}{f_2} = \tfrac{2}{R} \ip{\bar \Wo f_1}{\bar \Wo f_2}_t$.
\item\label{lem:iso2} $\bar \Wo$ uniquely extends to  a bounded linear operator $\Wo \colon L^2_R(\R^\dd) \to \Y$.
\item\label{lem:iso3}
For all $f_1, f_2  \in L^2_R(\R^\dd)$ we have $\ip{f_1}{f_2} = \tfrac{2}{R} \ip{\Wo f_1}{\Wo f_2}_t$.
\end{enumerate}
\end{lemma}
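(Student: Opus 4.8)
The strategy is to reduce everything to a single scalar identity and then obtain the remaining assertions by soft functional-analytic arguments. For the first assertion, I would invoke the energy/isometry identity of \cite{FinHalRak07,FinPatRak04}, which states that for $f \in C_c^{[d/2]+2}(\ball)$ the solution trace satisfies the norm relation $\norm{f}^2 = \tfrac{2}{R}\norm{\bar\Wo f}_t^2$. Since the solution operator of the linear wave equation \eqref{eq:wave-fwd} and the restriction to $\partial\ball\times(0,\infty)$ are both linear, $\bar\Wo$ is linear on $C_c^{[d/2]+2}(\ball)$, so $\bar\Wo(f_1\pm f_2)=\bar\Wo f_1 \pm \bar\Wo f_2$. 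Applying the norm relation to $f_1+f_2$ and to $f_1-f_2$ and inserting the results into the real polarization identity $\ip{f_1}{f_2}=\tfrac14(\norm{f_1+f_2}^2-\norm{f_1-f_2}^2)$, together with the analogous polarization of the weighted inner product $\ip{\edot}{\edot}_t$, yields the claimed bilinear identity $\ip{f_1}{f_2}=\tfrac{2}{R}\ip{\bar\Wo f_1}{\bar\Wo f_2}_t$.

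For the continuous extension, I would take $f_1=f_2=f$ in the identity just established to obtain $\norm{\bar\Wo f}_t=\sqrt{R/2}\,\norm{f}$ for all $f\in C_c^{[d/2]+2}(\ball)$; in particular $\bar\Wo$ is a bounded linear operator of norm $\sqrt{R/2}$ defined on the subspace $C_c^{[d/2]+2}(\ball)\subseteq L^2_R(\R^\dd)$. This subspace is dense in $L^2_R(\R^\dd)$, and the target $\Y$ is complete by construction. Hence the bounded-linear-extension theorem furnishes a unique bounded linear operator $\Wo\colon L^2_R(\R^\dd)\to\Y$ that agrees with $\bar\Wo$ on $C_c^{[d/2]+2}(\ball)$ and carries the same operator norm.

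The third assertion then follows by continuity. Both $(f_1,f_2)\mapsto\ip{f_1}{f_2}$ and $(f_1,f_2)\mapsto\tfrac{2}{R}\ip{\Wo f_1}{\Wo f_2}_t$ are continuous bilinear forms on $L^2_R(\R^\dd)\times L^2_R(\R^\dd)$, the latter because $\Wo$ is bounded and $\ip{\edot}{\edot}_t$ is continuous on $\Y\times\Y$. By the first assertion these forms coincide on the dense set $C_c^{[d/2]+2}(\ball)\times C_c^{[d/2]+2}(\ball)$, and two continuous maps agreeing on a dense subset agree everywhere, giving the identity for all $f_1,f_2\in L^2_R(\R^\dd)$.

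The only genuinely substantial ingredient is the scalar norm relation behind the first assertion, which encodes the specific spherical geometry and the structure of the wave propagator and is exactly the content borrowed from \cite{FinHalRak07,FinPatRak04}; were one to reprove it from scratch, the main obstacle would be deriving the weight $t\,\rmd t$ and the constant $R/2$ from the Kirchhoff / spherical-mean representation of the solution, separately in the odd- and even-dimensional cases. Everything downstream of that identity is routine.
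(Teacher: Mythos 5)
Your proposal is correct and follows essentially the same route as the paper: invoke the isometry identity of \cite{FinHalRak07,FinPatRak04} on $C_c^{[d/2]+2}(\ball)$, extend $\bar\Wo$ by boundedness on a dense subspace of $L^2_R(\R^\dd)$ to a bounded operator into the complete space $\Y$, and obtain the isometry on all of $L^2_R(\R^\dd)$ by continuity of the inner products. The only cosmetic difference is that you quote the quadratic norm identity and recover the bilinear version by polarization, whereas the paper cites the bilinear identity directly (noting that the cited formulas concern different initial conditions and so look superficially different).
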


\begin{proof}\mbox{}
\ref{lem:iso1}: See~\cite[Equation (1.16)]{FinHalRak07} for $\dd$ even and
\cite[Equation (1.16)]{FinPatRak04} for $\dd$ odd.  (Note that
the isometry identities in \cite{FinHalRak07,FinPatRak04} are stated for  the wave equation with different initial conditions, and therefore at first glance look different from~\ref{lem:iso1}.)\\
\ref{lem:iso2}, \ref{lem:iso3}: Item~\ref{lem:iso1} implies that $\Wo$ is bounded with respect to the norms of $L^2_R(\R^\dd)$ and $\Y$ and defined on a dense subspace of $L^2_R(\R^\dd)$. Consequently it uniquely extends to a bounded operator $\Wo \colon L^2_R(\R^\dd) \to \Y$. The continuity of the inner product finally shows the isometry property on $L^2_R(\R^\dd)$.
\end{proof}

We call $\Wo$ the acoustic forward operator.
PAT is concerned with the inverse problem of estimating  $f$
from potentially noisy and  approximate knowledge of  $\Wo f$.
In this paper we use the Galerkin least squares method for that purpose.

\subsection{Application of the Galerkin method}

Let $(\X_N)_{N\in \N}$ and $(\Y_N)_{N\in \N}$  be families of subspaces
of $L^2_R(\R^\dd)$ and $\Y$, respectively, with $\dim \X_N = \dim \Y_N  < \infty$.  Further let  $\Qo_N $ denote the orthogonal projection on $\Y_N$ and suppose
$ g \in \Y$.  The  Galerkin method for solving $\Wo f = g$ defines the approximate solution  $f_N \in \X_N$ as the solution of
\begin{equation}\label{eq:galerkin}
	 \Qo_N \Wo f_N = \Qo_N g \,.
\end{equation}
In this paper we consider the  special case where $\Y_N =  \Wo \X_N$, in which case  the solution of \eqref{eq:galerkin} is referred to as \emph{Galerkin least squares method}.
The name comes from the fact that in this case the Galerkin solution can be uniquely characterized  as the minimizer of the
least squares functional over $\X_N$,
\begin{equation}\label{eq:leastsquares}
    \Phi_N(f_N) \coloneqq
    \frac{1}{2} \norm{ \Wo   f_N - g}_t^2
	\to \min_{f_N \in \X_N} \,.
\end{equation}
Because $\Phi_N$  is a quadratic functional on a finite dimensional space and $\Wo$ is injective, \eqref{eq:leastsquares}  possesses a unique solution.  Together with the  isometry property we obtain the following characterizations of the least squares Galerkin method for PAT.

\begin{theorem}[Characterizations of the Galerkin least squares method]\label{thm:leastsquares}
For $g \in \Y$ and $f_N \in  \X_N$
 the  following  are equivalent:
 \begin{enumerate}[label=(\arabic*)]
\item\label{thm:leastsquares1} $ \Qo_N \Wo f_N = \Qo_N g$;
\item\label{thm:leastsquares3} $f_N$ minimizes the least squares functional \eqref{eq:leastsquares};
\item\label{thm:leastsquares2}
For an arbitrary  basis $(\ph_N^k)_{k \in \La_N}$ of $\X_N$,
we have
\begin{equation} \label{eq:galerkinequation}
\Ao_N c_N = d_N
\end{equation}
where
\begin{itemize}
\item $c_N \coloneqq (c_{N,k})_k$ with $f_N  = \sum_{k\in \La_N} c_{N,k} \ph_N^k$;
\item $d_N \coloneqq (\ip{\Wo \ph_N^k}{g}_t)_{k\in \La_N}$;
\item $\Ao_N \coloneqq ( \tfrac{R}{2}\ip{\ph_N^k}{\ph_N^\ell}_{L^2})_{k,\ell\in \La_N}$.
\end{itemize}
\end{enumerate}
\end{theorem}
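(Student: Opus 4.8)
The plan is to route all three conditions through a single variational identity, the weak form of the normal equations,
\begin{equation}\label{eq:orthcond}
\ip{\Wo f_N - g}{\Wo \ph_N^\ell}_t = 0 \qquad \text{for all } \ell \in \La_N .
\end{equation}
Once each of \ref{thm:leastsquares1}, \ref{thm:leastsquares3} and \ref{thm:leastsquares2} is shown to be equivalent to \eqref{eq:orthcond}, the theorem follows. The only substantial ingredient is Lemma~\ref{lem:iso}, which supplies both the injectivity of $\Wo$ and the scalar isometry constant $R/2$.

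For \ref{thm:leastsquares1} $\Leftrightarrow$ \eqref{eq:orthcond} I would use that $\Y_N = \Wo\X_N = \spa\set{\Wo\ph_N^\ell \mid \ell \in \La_N}$ by linearity of $\Wo$. Since $\Qo_N$ is the orthogonal projection onto $\Y_N$, the equation $\Qo_N\Wo f_N = \Qo_N g$ is equivalent to $\Qo_N(\Wo f_N - g) = 0$, i.e.\ to $\Wo f_N - g \in \Y_N^\perp$; and a vector lies in $\Y_N^\perp$ exactly when it is $\ip{\cdot}{\cdot}_t$-orthogonal to each spanning element $\Wo\ph_N^\ell$, which is \eqref{eq:orthcond}.

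For \ref{thm:leastsquares3} $\Leftrightarrow$ \eqref{eq:orthcond} I would invoke the projection theorem: among the elements of the closed subspace $\Y_N$, the one closest to $g$ is characterized by having its residual orthogonal to $\Y_N$. Because $\Wo$ is injective by Lemma~\ref{lem:iso}, the map $f_N \mapsto \Wo f_N$ is a bijection of $\X_N$ onto $\Y_N$, so minimizing $\Phi_N$ over $\X_N$ coincides with minimizing $\tfrac12\norm{h-g}_t^2$ over $h = \Wo f_N \in \Y_N$; optimality of $f_N$ is therefore exactly $\Wo f_N - g \in \Y_N^\perp$, which by the previous paragraph is \eqref{eq:orthcond}. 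Finally, for \ref{thm:leastsquares2} $\Leftrightarrow$ \eqref{eq:orthcond} I would substitute $f_N = \sum_{k\in\La_N} c_{N,k}\ph_N^k$ into \eqref{eq:orthcond} and use bilinearity to rewrite it, for every $\ell$, as $\sum_{k} c_{N,k}\ip{\Wo\ph_N^k}{\Wo\ph_N^\ell}_t = \ip{g}{\Wo\ph_N^\ell}_t$. At this point the isometry $\ip{\Wo\ph_N^k}{\Wo\ph_N^\ell}_t = \tfrac{R}{2}\ip{\ph_N^k}{\ph_N^\ell}_{L^2}$ of Lemma~\ref{lem:iso} turns the left-hand $\Y$-Gram matrix into $\Ao_N$, while the right-hand side is precisely the entry $(d_N)_\ell$; the resulting identities are the linear system $\Ao_N c_N = d_N$.

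There is no deep obstacle, as this is a finite-dimensional least-squares characterization; the work is chiefly one of care. I expect the points requiring attention to be: keeping every pairing in the weighted inner product $\ip{\cdot}{\cdot}_t$ of $\Y$ rather than the $L^2$ product; using $\spa\set{\Wo\ph_N^\ell \mid \ell \in \La_N} = \Y_N$ so that orthogonality to the spanning set upgrades to orthogonality to all of $\Y_N$; and applying the isometry with the exact constant $R/2$ when passing from $\Y$-inner products to $L^2$-inner products. The injectivity of $\Wo$ furnished by Lemma~\ref{lem:iso} is what additionally guarantees uniqueness of the minimizer in $\X_N$ and positive definiteness, hence invertibility, of $\Ao_N$, although it is not needed for the three equivalences as such.
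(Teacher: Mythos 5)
Your proposal is correct and takes essentially the same route as the paper: the paper's proof invokes the standard Galerkin least-squares characterizations (citing Kress) for the equivalence of \ref{thm:leastsquares1}, \ref{thm:leastsquares3} and the linear system with Gram matrix $(\ip{\Wo \ph_N^k}{\Wo \ph_N^\ell}_t)_{k,\ell \in \La_N}$, and then applies the isometry of Lemma~\ref{lem:iso} to replace these entries by $\tfrac{R}{2}\ip{\ph_N^k}{\ph_N^\ell}_{L^2}$, exactly as you do. The only difference is that you write out the standard orthogonality (normal-equations) argument that the paper delegates to the reference, which is a faithful filling-in rather than a different method.
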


\begin{proof}
The equivalence  of \ref{thm:leastsquares1}  and \ref{thm:leastsquares3} is
a standard result for the Galerkin squares method  (see, for example,  \cite{Kre99}).
 Another  standard  characterization   shows the  equivalence of \ref{thm:leastsquares1}  and  \ref{thm:leastsquares2} with the system matrix $\Ao_N =( \ip{\Wo \ph_N^k}{\Wo \ph_N^\ell}_t)_{k,\ell \in \La_N}$. Now, the isometry property given in Lemma \ref{lem:iso} shows $\ip{\Wo\ph_N^k}{\Wo\ph_N^\ell}_t
=\tfrac{R}{2}\ip{\ph_N^k}{\ph_N^\ell}_{L^2})_{k,\ell \in \La_N}$ and concludes the proof. \end{proof}

In general, evaluating all matrix entries $\ip{\Wo \ph_N^k}{\Wo \ph_N^\ell}_t $ can be difficult.
For many basis functions an explicit  expression for $\Wo \ph_N^k$ is not available including the natural pixel basis,  spaces defined by linear interpolation, or spline spaces.
Hence $\Wo \ph_N^k$ has to be evaluated numerically which is
time consuming and introduces additional errors.  Even if
$\Wo \ph_N^k$  is given explicitly, then the inner products $\ip{\Wo \ph_N^k}{\Wo \ph_N^\ell}_{L^2} $ have to be computed numerically and stored.
For large $N$ this can be problematic and time consuming. In contrast, by using the isometry property in our approach we only
have to compute the inner products $\ip{\ph_N^k}{\ph_N^\ell}$.
Further, in computed tomography it is common to take $\ph_N^k$ as translates of a  single function $\ph_N^0$. In such a situation the inner products  satisfy $\ip{\ph_N^k}{\ph_N^\ell} = \ip{\ph_N^0}{\ph_N^{\ell-k}}$ and therefore only a small fraction of all inner products actually have to be  computed.

\subsection{Convergence and stability analysis}

As another consequence of the isometry property we derive linear error estimates
for the Galerkin approach to PAT. We consider noisy data where  the data $g^\delta \in \Y$
is known to satisfy
 \begin{equation} \label{eq:noise}
 \| \Wo f^0 -  g^\delta \| \leq \delta \,,
 \end{equation}
for some noise level $\delta \geq 0$ and unknown $f^0 \in L^2_R(\R^\dd)$.  For noisy data we define the
Galerkin least squares solution by
\begin{equation}\label{eq:leastsquares-delta}
	f_N^\delta  =
	\argmin \left\{ \norm{ \Wo   h - g^\delta}_t
	\mid h \in \X_N \right\} \,.
\end{equation}
We then have the following convergence and stability result.

\begin{theorem}[Convergence and stability of the Galerkin method for PAT] \label{thm:convG}
 Let $f^0 \in L^2_R(\R^\dd)$, $g^\delta \in \Y$, $\delta \geq 0$
 satisfy \eqref{eq:noise} and let $f_N^\delta$ be defined by
 \eqref{eq:leastsquares-delta}. Then, the following error estimate for the
 Galerkin method holds:
 \begin{equation} \label{eq:galerkin-est}
 \norm{f_N^\delta - f^0  } \leq
  \min \set{\norm{h-f^0} \mid h \in \X_N}
 +
 \sqrt{\frac{2}{R}}  \, \delta  \,.
 \end{equation}
 \end{theorem}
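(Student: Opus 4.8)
The plan is to transport the whole problem into the data space $\Y$ by means of the isometry of Lemma~\ref{lem:iso}, where the least squares solution becomes an orthogonal projection and the analysis reduces to elementary Hilbert space geometry. Write $P_N$ for the $L^2$ orthogonal projection of $L^2_R(\R^\dd)$ onto $\X_N$, so that $\min\set{\norm{h - f^0} \mid h \in \X_N} = \norm{P_N f^0 - f^0}$. First I would record the two facts I need about the least squares solution. By the equivalence in Theorem~\ref{thm:leastsquares}, $f_N^\delta$ satisfies $\Qo_N \Wo f_N^\delta = \Qo_N g^\delta$; since $\Wo f_N^\delta \in \Y_N = \Wo\X_N$ this says precisely $\Wo f_N^\delta = \Qo_N g^\delta$, i.e. $\Wo f_N^\delta$ is the orthogonal projection of $g^\delta$ onto $\Y_N$.

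Second, I would show that $\Wo$ intertwines the two projections, $\Wo P_N = \Qo_N \Wo$ on $L^2_R(\R^\dd)$. This is where the isometry does the work: for every $w \in \X_N$, item~\ref{lem:iso3} gives $\ip{\Wo f^0 - \Wo P_N f^0}{\Wo w}_t = \tfrac{R}{2}\ip{f^0 - P_N f^0}{w} = 0$, and since $\Wo P_N f^0 \in \Y_N$ this identifies $\Wo P_N f^0$ as $\Qo_N \Wo f^0$.

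With these in hand the estimate follows by a Pythagorean decomposition. I decompose $f_N^\delta - f^0 = (f_N^\delta - P_N f^0) + (P_N f^0 - f^0)$: the first summand lies in $\X_N$ and the second is orthogonal to $\X_N$, so $\norm{f_N^\delta - f^0}^2 = \norm{f_N^\delta - P_N f^0}^2 + \norm{P_N f^0 - f^0}^2$. For the first term I again pass to the data side via item~\ref{lem:iso3}:
\[
\norm{f_N^\delta - P_N f^0} = \sqrt{\tfrac{2}{R}}\,\norm{\Wo f_N^\delta - \Wo P_N f^0}_t = \sqrt{\tfrac{2}{R}}\,\norm{\Qo_N(g^\delta - \Wo f^0)}_t \le \sqrt{\tfrac{2}{R}}\,\delta \,,
\]
using the two facts above, that $\Qo_N$ is a norm-nonincreasing orthogonal projection, and the noise bound \eqref{eq:noise}. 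Feeding this into the Pythagorean identity and using $\sqrt{a^2+b^2}\le a+b$ for $a,b\ge 0$ yields \eqref{eq:galerkin-est}.

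The only genuinely substantive step is the intertwining $\Wo P_N = \Qo_N \Wo$; everything else is bookkeeping with the isometry constant $\sqrt{2/R}$. I expect the main temptation to avoid is a direct triangle-inequality argument bounding $\norm{\Wo f_N^\delta - \Wo f^0}_t \le \norm{\Wo f_N^\delta - g^\delta}_t + \delta$ together with the minimality of $f_N^\delta$, which would produce an extra factor $2$ in front of $\delta$ and hence a non-sharp constant; the Pythagorean splitting is precisely what delivers the clean additive $\sqrt{2/R}\,\delta$ in the stated estimate.
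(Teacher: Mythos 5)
Your proof is correct, and it reaches the sharp constant $\sqrt{2/R}$ by a route that is genuinely different from the paper's in how the noise is handled. The paper decomposes the noise $e^\delta = g^\delta - \Wo f^0$ relative to the (infinite-dimensional) range of $\Wo$: it uses that $\range(\Wo)$ is closed (itself a consequence of the isometry), pulls the range component back through the injective operator $\Wo$ to define an auxiliary element $f^\delta$ with $\Wo f^\delta = \Wo f^0 + \Po_{\range(\Wo)}(e^\delta)$, verifies $f_N^\delta = \Po_{\X_N} f^\delta$, and concludes with the triangle inequality $\norm{f_N^\delta - f^0} \leq \norm{f_N^\delta - f_N^0} + \norm{f_N^0 - f^0}$. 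You instead work only with the finite-dimensional projection $\Qo_N$ onto $\Y_N = \Wo\X_N$: the identity $\Wo f_N^\delta = \Qo_N g^\delta$, the intertwining $\Wo \Po_{\X_N} = \Qo_N \Wo$ (your one substantive step, correctly derived from item \ref{lem:iso3} of Lemma~\ref{lem:iso}), and a Pythagorean split of $f_N^\delta - f^0$ into a component in $\X_N$ and one orthogonal to $\X_N$. What your route buys: you never need closedness of $\range(\Wo)$ nor the auxiliary $f^\delta$, since closedness of $\Y_N$ is automatic from finite dimensionality, and the orthogonal splitting in fact yields the slightly stronger estimate
\begin{equation*}
\norm{f_N^\delta - f^0}
\leq
\Bigl( \bigl(\min \set{\norm{h-f^0} \mid h \in \X_N}\bigr)^2 + \tfrac{2}{R}\,\delta^2 \Bigr)^{1/2},
\end{equation*}
of which \eqref{eq:galerkin-est} is a corollary via $\sqrt{a^2+b^2} \leq a+b$. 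What the paper's route buys: it isolates the noise-free statement $f_N^0 = \Po_{\X_N} f^0$ as a standalone fact and makes explicit that the component of the noise orthogonal to $\range(\Wo)$ is invisible to the reconstruction. Incidentally, the paper's two error terms $f_N^\delta - f_N^0 \in \X_N$ and $f_N^0 - f^0 \perp \X_N$ are also mutually orthogonal, so the same quadrature-form bound is implicit there as well; the paper simply does not exploit it. Your closing observation, that a naive minimality-plus-triangle-inequality argument in the data space would incur an extra factor $2$ in front of $\delta$, is also accurate.
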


\begin{proof}
We start with the noise free case  $\delta =0$. The definition of $f_N^\delta$  and the isometry property of $\Wo$ yield
\begin{align*}
f_N^0
&=  \argmin \left\{ \norm{ \Wo   h - g^0 }_t
	\mid h \in \X_N \right\}
	\\
&=  \argmin \left\{ \norm{ \Wo   h - \Wo f^0 }_t
	\mid h \in \X_N \right\} \\
&=  \argmin \left\{\norm{   h -  f^0 }
	\mid h \in \X_N \right\}
\end{align*}
This shows $f_N^0  = \Po_{\X_N} f^0$ and yields  \eqref{eq:galerkin-est} for $\delta =0$.
Here and below we use $\Po_V$  to denote the orthogonal projection on
a closed subspace  $V \subseteq  L^2_R(\R^\dd)$.

Now consider the case of arbitrary $\delta$,  with $ g^\delta  = \Wo f^0
+ e^\delta$ where   $e^\delta \in \Y$ satisfies  $\norm{e^\delta} \leq \delta$.
Because  $\range (\Wo)$ is closed we can
write
\begin{equation*}
g^\delta =
( \Wo f^0+ \Po_{\range(\Wo)} (e^\delta) ) + \Po_{\range(\Wo)^\bot} (e^\delta)
\eqqcolon \Wo f^\delta + \Po_{\range(\Wo)^\bot} (e^\delta) \,.
\end{equation*}
Following the  case   $\delta=0$ and using that  $\Po_{\range(\Wo)^\bot} (e^\delta) \bot \range(\Wo)$ one verifies that $f_N^\delta  = \Po_{\X_N} f^\delta$.
Therefore, by the triangle inequality and the isometry property of $\Wo$
we obtain
\begin{align*}
\norm{f_N^\delta - f^0}
&\leq
\norm{f_N^\delta - f_N^0}
+ \norm{ f_N^0 - f^0} \\
&  =
\norm{\Po_{\X_n} (f^\delta - f^0)}
+  \min \set{\norm{h-f^0} \mid h \in \X_N}
\\
&\leq \sqrt{\frac{2}{R}} \,
\norm{\Wo f^\delta - \Wo f^0 }_t
+  \min \set{\norm{h-f^0} \mid h \in \X_N}  \,.
\end{align*}
Together with $\norm{\Wo f^\delta - \Wo f^0}_t  = \norm{\Po_{\range(\Wo)} (e^\delta) }_t  \leq \delta$ this concludes the proof.
\end{proof}

The error estimate  in Theorem~\ref{thm:convG}  depends on two terms: the first term
depends on the approximation properties of the space $\X_N$  and the second term on the noise level $\delta$.   As easily verified   both terms are  optimal and cannot  be improved.  The second term shows stability of our  Galerkin least squares approach. Under the reasonable  assumption that the spaces $\X_N$ satisfy the  denseness property
\begin{equation*}
	\forall f \in L^2_R(\R^\dd) \colon \quad
	\lim_{N \to \infty} \min \set{\norm{h-f} \mid h \in \X_N} = 0 \,,
 \end{equation*}
the derived error estimate further implies  convergence of the
Galerkin approach.

\section{Shift invariant spaces in computed tomography}
\label{sec:spaces}

In many tomographic and signal processing applications, natural spaces for approximating the
underlying function  are subspaces of shift invariant spaces. In this paper we  consider spaces $\V_{T,s,\ph} $
that are generated by  translated and scaled versions of a single function $\ph\in L^2(\R^\dd)$,
\begin{equation} \label{eq:XTs}
	\V_{T,s,\ph} \coloneqq
	\overline{\spa (\set{  \ph_{T,s}^k  \mid k \in \Z^\dd  }})
	\subseteq L^2(\R^\dd) \,.
\end{equation}
Here $\spa$ denotes the linear hull, $\overline{X}$ stands for the closure  with respect  to    $ \norm{\edot}_{L^2}$ of a set $X$, and
\begin{equation} \label{eq:phik}
\ph_{T,s}^k(x) \coloneqq  \frac{1}{T^{\dd/2}} \,
\ph \left( \frac{x}{T}-sk \right) \quad \text{  for $T,s >0$ and $k \in \Z^\dd$ } \,.
\end{equation}
 We have chosen the  normalization of the generating
 functions $\ph_{T,s}^k$ in such a way that   $\norm{\ph_{T,s}^k}_{L^2} = \norm{\ph}_{L^2}$ for all $T,s,k$. In this section we derive  conditions such that any $L^2$ function can be approximated
by elements in $\V_{T,s,\ph}$. Further, we present examples  of generating functions that
are relevant for (photoacoustic) computed tomography.

Any tomographic reconstruction method uses, either explicitly or implicitly,
a particular discrete reconstruction space.
This is obvious for any iterative procedure as it requires a finite dimensional representation of the forward operator that can be evaluated numerically. However, also direct methods use an underlying discrete image space. For example,
standard filtered backprojection algorithms  usually reconstruct samples of a bandlimited approximation of the unknown function.
In such a situation, the underlying discrete signal space consists of bandlimited functions. In this paper we allow more general shift invariant spaces.

The following properties of the generating function and the spaces $\V_{T,s,\ph}$
have been reported desirable for tomographic applications (see \cite{nilchian2015optimized,wang2014discrete}):
\begin{enumerate}[label=(V\arabic*)]
\item\label{V1} $\ph$ has ``small'' spatial support;
\item\label{V2} $\ph$ is rotationally invariant;
\item\label{V3} $(\ph_{T,s}^k)_{k \in \Z^\dd}$ is a Riesz basis of $\V_{T,s,\ph}$;
\item\label{V4} $\ph$ satisfies the so called partition  of unity property.
\end{enumerate}
Conditions \ref{V1} and \ref{V2} are desirable from a computational point
of view  and often help to derive  efficient reconstruction algorithms.
The properties \ref{V3} and \ref{V4} are of more fundamental nature as these conditions
imply that any $L^2$ function can be approximated arbitrarily well by  elements in
$\V_{T,s,\ph}$ as $T \to 0$  (with $s$ kept fixed; the so  called stationary case).
In \cite{nilchian2015optimized} it has been pointed out  that the properties
\ref{V1}-\ref{V4} cannot be  simultaneously fulfilled. This implies that
for taking $s$ independent of $T$, the spaces $\V_{T,s,\ph}$ have a
limited approximation capability in the sense that for a typical function $f$,
the approximation error $\min_{u \in \V_{T,s,\ph}} \norm{f-u}^2_{L^2}$
does not converge to zero as $T\to 0$ and $s$ is kept fixed.

Despite these negative results, radially symmetric basis functions are of great popularity in computed tomography (see for example, \cite{herman2015basis,lewitt1990multidimensional,lewitt1992alternatives,matej1996practical,nilchian2015optimized,wang2012investigation,wang2014discrete}).  In this paper we therefore propose to also allow the shift parameter $s$ to be variable.
Under reasonable assumptions we show that  the approximation error  converges to zero for
 $s \to 0$. This convergence in particularly holds for radially symmetric generating functions having  some
decay in the Fourier space, including generalized Kaiser-Bessel functions which are the most
popular choice in tomographic image reconstruction.

\subsection{Riesz bases of shift invariant spaces}

Recall that  the family $(\ph_{T,s}^k)_{k \in \Z^\dd}$ is called a Riesz
basis of $\V_{T,s,\ph}$ if there exist $A,B>0$ such that
\begin{equation} \label{eq:rb}
\forall c \in \ell^2(\Z^\dd)\colon \quad
A\norm{c}_{\ell^2}^2\leq
\Bigl\lVert\sum_{k\in\Z^\dd} c_k \ph_{T,s}^k\Bigr\rVert_{L^2}^2\leq B\norm{c}_{\ell^2}^2 \,,
\end{equation}
where $\norm{c}_{\ell^2}^2  \coloneqq \sum_{k\in\Z^\dd} \abs{c_k}^2$ is the squared $\ell^2$-norm of  $c=(c_k)_{k \in \Z^\dd}$.  A Riesz basis  of $\V_{T,s,\ph}$ can equivalently be defined as a linear independent family of frames  and the constants $A$ and $B$ are the
lower and upper frame bounds of  $(\ph_{T,s}^k)_{k \in \Z^\dd}$, respectively.
In the following we write $\hat \ph$ for the $d$-dimensional  Fourier transform defined by $\hat \ph (\xi) \coloneqq (2\pi)^{-d/2} \int_{\R^\dd}
\ph (x) e^{-i \inner{\xi}{x} } \rmd x$ for $\ph \in L^2(\R^\dd)\cap L^1(\R^\dd)$ and extended to $L^2(\R^\dd)$ by continuity.

The following two Lemmas are well known in the case that
$d = T=1$  (see~\cite[Theorem 3.4]{Mal09}). Due to page limitations
and because the general case is shown analogously, the  proofs of the Lemmas are omitted.

\begin{lemma}[Riesz basis property] \label{lem:riesz}
The family $(\ph_{T,s}^k)_{k\in\Z^\dd}$ is a Riesz basis of $\V_{T,s,\ph}$
with frame bounds $A$ and $B$, if and only if
\begin{equation}\label{eq:riesz}
\frac{A}{(2\pi)^\dd}\leq \frac{1}{s^\dd}\sum_{k\in\Z^\dd}|\hat{\ph}(\xi+\tfrac{2\pi}{s}k)|^2\leq \frac{B}{(2\pi)^\dd}
\quad \text{ for a.e. }  \;  \xi\in[0,\tfrac{2\pi}{s}]^\dd \,.
\end{equation}
\end{lemma}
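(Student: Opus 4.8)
The plan is to pass to the Fourier domain, where the Riesz condition decouples into a pointwise two-sided bound on the periodization (the ``bracket'') of $\abs{\hat\psi}^2$. First I would strip off the scaling by setting $\psi \coloneqq \ph_{T,s}^0$, i.e. $\psi(x) = T^{-\dd/2}\ph(x/T)$, and observing that $\ph_{T,s}^k(x) = \psi(x - Ts\,k)$. Thus the family is nothing but the lattice translates of the single function $\psi$ along $Ts\,\Z^\dd$, and a change of variables in the Fourier integral gives $\hat\psi(\xi) = T^{\dd/2}\hat\ph(T\xi)$. To avoid any circularity about convergence of $\sum_k c_k \ph_{T,s}^k$ before the upper bound is known, I would carry out the computation first for finitely supported $c$ and only pass to general $c \in \ell^2(\Z^\dd)$ at the end by density.

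For finitely supported $c$, set $F \coloneqq \sum_k c_k \ph_{T,s}^k$ and let $m(\xi) \coloneqq \sum_k c_k e^{-i Ts \inner{k}{\xi}}$ be the associated trigonometric polynomial, which is $\tfrac{2\pi}{Ts}\Z^\dd$-periodic. The translation property gives $\hat F = m\,\hat\psi$, so by Plancherel and periodization over the fundamental cell $Q \coloneqq [0,\tfrac{2\pi}{Ts}]^\dd$,
\begin{equation*}
\norm{F}_{L^2}^2 = \int_{\R^\dd} \abs{m(\xi)}^2 \abs{\hat\psi(\xi)}^2 \,\rmd\xi = \int_Q \abs{m(\xi)}^2 \, \Phi(\xi)\,\rmd\xi, \qquad \Phi(\xi) \coloneqq \sum_{n\in\Z^\dd} \bigl\lvert \hat\psi(\xi + \tfrac{2\pi}{Ts}n)\bigr\rvert^2 .
\end{equation*}
On the other hand, Parseval for the orthogonal system of exponentials $\xi\mapsto e^{-iTs\inner{k}{\xi}}$, $k\in\Z^\dd$, on $Q$ yields $\int_Q \abs{m}^2 = \abs{Q}\,\norm{c}_{\ell^2}^2$ with $\abs{Q} = (\tfrac{2\pi}{Ts})^\dd$. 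Hence the Riesz inequalities $A\norm{c}^2 \le \norm{F}^2 \le B\norm{c}^2$ are exactly
\begin{equation*}
\frac{A}{\abs{Q}} \int_Q \abs{m}^2 \;\le\; \int_Q \abs{m}^2\,\Phi \;\le\; \frac{B}{\abs{Q}} \int_Q \abs{m}^2 .
\end{equation*}

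It then remains to show that these integral inequalities, required to hold for all trigonometric polynomials $m$ (equivalently, after density, for all $m \in L^2(Q)$), are equivalent to the pointwise bound $A/\abs{Q} \le \Phi(\xi) \le B/\abs{Q}$ for a.e. $\xi \in Q$. The reverse implication is immediate by monotonicity of the integral. The forward implication is the one genuine technical point, and the main obstacle I anticipate: if $\Phi$ exceeded $B/\abs{Q}$ on a set of positive measure, I would reach a contradiction by testing against (trigonometric-polynomial) sequences $c$ whose symbol $m$ concentrates its $L^2$-mass on that set — a standard Lebesgue-point localization argument — and the lower bound is handled symmetrically; this also yields $\Phi \in L^\infty(Q)$, which in turn justifies extending the identity for $\norm{F}^2$ to all of $\ell^2(\Z^\dd)$ and hence the convergence of the defining series once the upper bound is in force. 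Finally, the substitution $\eta = T\xi$ turns $\Phi(\xi)$ into $T^\dd \sum_n \abs{\hat\ph(\eta + \tfrac{2\pi}{s}n)}^2$ and $Q$ into $[0,\tfrac{2\pi}{s}]^\dd$; dividing the bounds $A/\abs{Q} \le \Phi \le B/\abs{Q}$ first by $T^\dd$ and then by $s^\dd$ produces precisely \eqref{eq:riesz}.
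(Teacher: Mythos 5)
Your proof is correct and is essentially the argument the paper relies on: the paper's own ``proof'' is merely a citation to Mallat's Theorem 3.4 (the case $d=T=1$), whose proof is exactly this Fourier-domain periodization argument ($\hat F = m\,\hat\psi$, Plancherel, folding onto the fundamental cell, and Parseval for the symbol $m$), and your version carries out the scaling bookkeeping for general $T,s$ correctly, landing on the stated bounds after the substitution $\eta = T\xi$. The one genuinely technical step you flag --- deducing the pointwise bound on the periodization from the integral inequalities by concentrating $\abs{m}^2$ at Lebesgue points --- is the same step handled in that reference, and your treatment of it (together with the finitely-supported-$c$ reduction to avoid circularity) is sound.
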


\begin{proof}
Follows the lines of  \cite[Theorem 3.4]{Mal09}.
\end{proof}

The following  Lemma implies that for any Riesz basis $(\ph_{T,s}^k)_{k\in\Z^\dd}$ one  can construct an orthonormal basis of $\V_{T,s,\ph}$ that is again generated by translated and scaled versions $\theta_{T,s}^k(x) \coloneqq  T^{-\dd/2} \theta (x/T - sk)$
of a single function $\theta \in L^2(\R^\dd)$.

\begin{lemma}[Orthonormalization]\label{lem:on}
Let $(\ph_{T,s}^k)_{k\in\Z^\dd}$ be a Riesz basis of $\V_{T,s,\ph}$.
\begin{enumerate}
\item \label{lem:on1}$(\ph_{T,s}^k)_{k\in\Z^\dd}$  orthonormal $\iff$
$
\sum_{k\in\Z^\dd} |\hat{\ph}(\xi+\tfrac{2\pi}{s}k)|^2=\frac{s^\dd}{(2\pi)^\dd}
$ for a.e. $\xi  \in \R^\dd$.
\item \label{lem:on2}
$(\theta_{T,s}^k)_{k\in\Z^\dd}$ is an orthonormal basis of $\V_{T,s,\ph}$, where $\theta \in L^2(\R^\dd)$ is defined by
\begin{equation}\label{eq:on}
	\hat{\theta}(\xi)
	=
	\frac{s^{\dd/2} \hat{\varphi}(\xi)}{(2\pi)^{\dd/2} \sqrt{\sum_{k\in\Z^\dd}|\hat{\varphi}(\xi+\tfrac{2\pi}{s}k)|^2}}.
\end{equation}
\end{enumerate}
\end{lemma}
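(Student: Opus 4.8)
The plan is to pass everything to the Fourier domain, where the scaling and translation in \eqref{eq:phik} become a dilation of the argument together with multiplication by a unimodular factor. First I would record, directly from \eqref{eq:phik} and the convention for $\hat\ph$, that $\widehat{\ph_{T,s}^k}(\xi) = T^{\dd/2}\, e^{-i\inner{Tsk}{\xi}}\,\hat\ph(T\xi)$. Inserting this into Parseval's identity and substituting $\eta=T\xi$ gives a $T$-independent expression depending on $k,\ell$ only through $m\coloneqq k-\ell$, which I then periodize over the lattice $\tfrac{2\pi}{s}\Z^\dd$ (the exponential is $\tfrac{2\pi}{s}$-periodic precisely because $m\in\Z^\dd$):
\[
\ip{\ph_{T,s}^k}{\ph_{T,s}^\ell}_{L^2}
= \int_{\R^\dd} e^{-i\inner{sm}{\eta}}\abs{\hat\ph(\eta)}^2\,\rmd\eta
= \int_{[0,\frac{2\pi}{s}]^\dd} e^{-i\inner{sm}{\eta}}\,\Phi_\ph(\eta)\,\rmd\eta\,,
\qquad \Phi_\ph(\eta)\coloneqq\sum_{k\in\Z^\dd}\abs{\hat\ph(\eta+\tfrac{2\pi}{s}k)}^2\,.
\]
Thus the Gram entries are the torus-Fourier coefficients of the $\tfrac{2\pi}{s}$-periodic function $\Phi_\ph$. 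For item~\ref{lem:on1} I would then argue by uniqueness of the Fourier series: orthonormality means these coefficients equal $\delta_{m,0}$ for all $m\in\Z^\dd$, and since the constant function $s^\dd/(2\pi)^\dd$ has exactly these coefficients on $[0,\tfrac{2\pi}{s}]^\dd$, completeness of the trigonometric system forces $\Phi_\ph\equiv s^\dd/(2\pi)^\dd$ a.e., and conversely.

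For item~\ref{lem:on2} I would first verify orthonormality of $(\theta_{T,s}^k)$ through the criterion just proved. From \eqref{eq:on}, $\abs{\hat\theta(\xi)}^2=\tfrac{s^\dd}{(2\pi)^\dd}\,\abs{\hat\ph(\xi)}^2/\Phi_\ph(\xi)$; summing over the lattice and using that $\Phi_\ph$ is $\tfrac{2\pi}{s}$-periodic (so it factors out of the sum) collapses the numerator back to $\Phi_\ph(\xi)$ and yields $\sum_{k}\abs{\hat\theta(\xi+\tfrac{2\pi}{s}k)}^2=s^\dd/(2\pi)^\dd$, whence $(\theta_{T,s}^k)$ is orthonormal by item~\ref{lem:on1}. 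This is where the Riesz hypothesis is used: Lemma~\ref{lem:riesz} and \eqref{eq:riesz} give $0<\tfrac{s^\dd A}{(2\pi)^\dd}\le\Phi_\ph\le\tfrac{s^\dd B}{(2\pi)^\dd}$ a.e., so the division in \eqref{eq:on} is legitimate and $\Phi_\ph$ is bounded away from $0$ and $\infty$.

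The step I expect to require the most care is showing that $(\theta_{T,s}^k)$ spans the whole space $\V_{T,s,\ph}$, rather than merely being an orthonormal system in $L^2$. The idea is that \eqref{eq:on} writes $\hat\theta=r\,\hat\ph$ with the periodic multiplier $r(\xi)\coloneqq s^{\dd/2}/\bigl((2\pi)^{\dd/2}\sqrt{\Phi_\ph(\xi)}\bigr)$, which by the two-sided bound on $\Phi_\ph$ is $\tfrac{2\pi}{s}$-periodic and bounded above and below. Expanding $r$ in its Fourier series $r(\xi)=\sum_{j}a_j e^{-i\inner{sj}{\xi}}$ and transforming back shows $\theta=\sum_j a_j\,\ph(\edot-sj)$, so that $\theta_{T,s}^k\in\V_{T,s,\ph}$ for every $k$; since $1/r$ is again periodic and bounded, the reversed argument gives $\ph_{T,s}^k\in\V_{T,s,\theta}$, hence $\V_{T,s,\theta}=\V_{T,s,\ph}$. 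As an orthonormal system whose closed linear span is exactly $\V_{T,s,\ph}$, the family $(\theta_{T,s}^k)$ is an orthonormal basis of $\V_{T,s,\ph}$, which completes the proof.
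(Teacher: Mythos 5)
Your proof is correct and is essentially the paper's own approach: the paper gives no argument for this lemma beyond the citation to \cite[Theorem 3.4]{Mal09}, and your argument — Gram entries as torus Fourier coefficients of the periodization $\Phi_\ph(\xi)=\sum_{k}|\hat{\ph}(\xi+\tfrac{2\pi}{s}k)|^2$, uniqueness of Fourier series for item (a), and the bounded periodic multiplier $\hat{\theta}=r\,\hat{\ph}$ with $r,1/r\in L^\infty$ to identify $\V_{T,s,\theta}=\V_{T,s,\ph}$ for item (b) — is precisely the standard proof that citation refers to, adapted to general $d$, $T$, $s$. Nothing to fix.
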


\begin{proof}
Follows the lines of  \cite[Theorem 3.4]{Mal09}.
\end{proof}

According to Lemma~\ref{lem:on}, for  theoretical purposes one may assume  that the considered basis of $\V_{T,s,\ph}$ is already orthogonal. From a practical point of view, however, it may be more convenient to work with the original non-orthogonal basis. The function $\ph$ may have additional properties such as small support or radial symmetry which may  not be the case for $\theta$.
Also it may not be the case that $\theta$ is known analytically.

\subsection{The $L^2$-approximation error}

We  now investigate the  $L^2$-approximation error in shift invariant spaces,
\begin{equation}\label{eq:aerror-def}
    \forall f \in L^2(\R^\dd) \colon
    \quad
    \min_{u \in \V_{T,s,\ph}}  \norm{f - u}_{L^2}
    = \norm{f - \Po_{T,s} f}_{L^2} \,,
\end{equation}
as well as its asymptotic properties.  Here and in  the following
$\Po_{T,s}\colon L^2(\R^\dd)\rightarrow \V_{T,s,\ph}$  denotes the  orthogonal projection on $\V_{T,s,\ph}$.
It is given by $\Po_{T,s} f = \sum_{\la \in \La}\langle f,e_\la\rangle e_\la$, where $\kl{e_\la}_{\la\in\La}$ is any orthogonal basis of  $\V_{T,s,\ph}$.
For the stationary case $s=1$, the following Theorem has been obtained in~\cite{blu99approximation}.

\begin{theorem}[The $L^2$-approximation error]
Let\label{thm:aerror}  $\kl{\ph_{T,s}^k}_{k\in\Z^\dd}$ be a Riesz basis of $\V_{T,s,\ph}$ and define
\begin{equation}\label{eq:RR}
\EE_{\ph}(s,T\xi)
 \coloneqq
 1-\frac{|\hat{\varphi}(T\xi)|^2}{\sum_{k\in\Z^\dd}|\hat{\varphi}(T\xi+2k\pi/s)|^2}
 \quad \text{ for } \xi \in \R^\dd  \text{ and } T,s> 0 \,.
\end{equation}
Then, for every $f \in W_2^r(\R^\dd)$ with $r>d/2$ we have
\begin{equation} \label{eq:aerror}
\norm{\Po_{T,s}f-f}_{L^2}
=\left[\int_{[-\frac{\pi}{Ts},\frac{\pi}{Ts}]^\dd}\abs{\hat{f}(\xi)}^2
\EE_{\ph}(s,T\xi)  \rmd \xi \right]^{\frac{1}{2}}
+\RR_{\ph}(f,Ts) \,,
\end{equation}
where the remainder can be estimated as
\begin{equation}
\label{eq:EE}
\RR_{\ph}(f,Ts)
\leq  \norm{f}_{W^r_2}
\bkl{\frac{Ts}{\pi}}^{r}
\sqrt{\sum_{n\in\Z^\dd\setminus\{0\}} \frac{1}{\lVert n \rVert^{2r}}}
\quad \text{ for }  T, s > 0\,.
\end{equation}
\end{theorem}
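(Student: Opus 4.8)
The plan is to reduce the general error to that of a frequency-localised function, for which $\norm{\Po_{T,s}f-f}_{L^2}$ can be computed \emph{exactly}, and then to absorb the remaining high-frequency tail into the remainder $\RR_\ph$ by means of the Sobolev regularity of $f$. Throughout I set $Q\coloneqq[-\pi/(Ts),\pi/(Ts)]^\dd$ and $\Phi(\omega)\coloneqq\sum_{k\in\Z^\dd}\abs{\hat\ph(\omega+\tfrac{2\pi}{s}k)}^2$, so that $\EE_\ph(s,\omega)=1-\abs{\hat\ph(\omega)}^2/\Phi(\omega)$ and $0\le\EE_\ph\le1$; the lower Riesz bound of Lemma~\ref{lem:riesz} guarantees $\Phi>0$ a.e. Since $r>d/2$ we have $\hat f\in L^1(\R^\dd)\cap L^2(\R^\dd)$, which legitimises all the pointwise Fourier series and periodisations below. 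I decompose $f=f_0+f_1$, where $f_0$ is defined through $\hat f_0=\hat f$ on $Q$ and $\hat f_0=0$ otherwise.

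The first and decisive step is to establish the exact identity
\begin{equation*}
\norm{f_0-\Po_{T,s}f_0}_{L^2}^2=\int_Q\abs{\hat f(\xi)}^2\,\EE_\ph(s,T\xi)\,\rmd\xi .
\end{equation*}
To this end I would use the orthonormal basis $(\theta_{T,s}^k)_{k\in\Z^\dd}$ of $\V_{T,s,\ph}$ supplied by Lemma~\ref{lem:on} and evaluate $\norm{\Po_{T,s}f_0}_{L^2}^2=\sum_{k}\abs{\ip{f_0}{\theta_{T,s}^k}}^2$ by Parseval in the Fourier domain. Writing each coefficient $\ip{f_0}{\theta_{T,s}^k}$ as a Fourier coefficient of the $(2\pi/s)$-periodisation of $T^{-\dd/2}\hat f_0(\edot/T)\,\overline{\hat\theta}$, the crucial point is that $\hat f_0(\edot/T)$ is supported in $T\cdot Q=[-\pi/s,\pi/s]^\dd$, so the translates by $\tfrac{2\pi}{s}k$ do not overlap and every periodisation collapses to its single $k=0$ term. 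Inserting the normalisation \eqref{eq:on} of $\hat\theta$ then gives $\norm{\Po_{T,s}f_0}_{L^2}^2=\int_Q\abs{\hat f}^2\kl{1-\EE_\ph(s,T\xi)}\,\rmd\xi$, and subtracting this from $\norm{f_0}_{L^2}^2=\int_Q\abs{\hat f}^2\,\rmd\xi$ yields the claimed identity. Consequently the bracketed main term in \eqref{eq:aerror} is precisely $\norm{f_0-\Po_{T,s}f_0}_{L^2}$.

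It then remains to bound $\RR_\ph(f,Ts)\coloneqq\norm{f-\Po_{T,s}f}_{L^2}-\norm{f_0-\Po_{T,s}f_0}_{L^2}$. Since $\Po_{T,s}$ is an orthogonal projection, the reverse triangle inequality gives
\begin{equation*}
\abs{\RR_\ph(f,Ts)}\le\norm{\kl{f-f_0}-\Po_{T,s}\kl{f-f_0}}_{L^2}=\norm{f_1-\Po_{T,s}f_1}_{L^2}\le\norm{f_1}_{L^2},
\end{equation*}
so it suffices to estimate the tail $\norm{f_1}_{L^2}^2=\int_{\R^\dd\setminus Q}\abs{\hat f}^2\,\rmd\xi$. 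I would tile the exterior of $Q$ by the lattice translates $Q+\tfrac{2\pi}{Ts}n$, $n\in\Z^\dd\setminus\set{0}$. The elementary geometric estimate $\norm{\xi}\ge\tfrac{\pi}{Ts}\norm{n}$ for every $\xi\in Q+\tfrac{2\pi}{Ts}n$ (indeed, for each index $j$ with $n_j\neq0$ the $j$-th coordinate of such a $\xi$ has modulus at least $\tfrac{\pi}{Ts}\kl{2\abs{n_j}-1}\ge\tfrac{\pi}{Ts}\abs{n_j}$, while the indices with $n_j=0$ contribute non-negatively) yields, cell by cell,
\begin{equation*}
\int_{Q+\frac{2\pi}{Ts}n}\abs{\hat f(\xi)}^2\,\rmd\xi
\le\bkl{\tfrac{Ts}{\pi}}^{2r}\frac{1}{\norm{n}^{2r}}\int_{\R^\dd}\abs{\hat f(\xi)}^2\norm{\xi}^{2r}\,\rmd\xi
\le\bkl{\tfrac{Ts}{\pi}}^{2r}\frac{1}{\norm{n}^{2r}}\,\norm{f}_{W_2^r}^2 .
\end{equation*}
Bounding each cell integral by the full Sobolev norm and summing over $n\in\Z^\dd\setminus\set{0}$ produces exactly the factor $\sum_{n\neq0}\norm{n}^{-2r}$ and hence the stated bound \eqref{eq:EE}; the lattice sum converges precisely because $r>d/2$, the same condition that placed $\hat f$ in $L^1$.

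The main obstacle is the exact computation of the first step: one has to carry out the Fourier/periodisation bookkeeping for the scaled and shifted orthonormal system $\theta_{T,s}^k$ and verify that the band-limitation of $f_0$ removes every aliasing contribution, so that the error kernel $\EE_\ph$ emerges with the correct normalisation. The tail estimate, by contrast, is routine once the lattice tiling and the per-cell lower bound on $\norm{\xi}$ are in place.
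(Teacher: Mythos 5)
Your proof is correct, and its treatment of the remainder takes a genuinely different route from the paper's. The decisive ingredient is shared: like the paper, you obtain the exact identity $\norm{h-\Po_{T,s}h}_{L^2}^2=\int\abs{\hat h(\xi)}^2\,\EE_\ph(s,T\xi)\,\rmd\xi$ for functions $h$ whose Fourier transform lives in a single cell of the lattice $\tfrac{2\pi}{Ts}\Z^\dd$, via the orthonormalized generator of Lemma~\ref{lem:on} and Parseval for Fourier series; your periodization-collapse argument on the central cell is exactly the paper's computation specialized to $n=0$. The difference lies in the decomposition and in how the high frequencies are absorbed. The paper splits $f=\sum_{n\in\Z^\dd}f_n$ over \emph{all} cells $Q_n=\tfrac{2\pi}{Ts}n+[-\tfrac{\pi}{Ts},\tfrac{\pi}{Ts}]^\dd$, applies the exact identity to every $f_n$ (which requires the extra phase-factor step $e^{iTs\inner{(\xi-2\pi n/(Ts))}{k}}=e^{iTs\inner{\xi}{k}}$), and then combines the triangle inequality with the Cauchy--Schwarz inequality for sums. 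You split $f=f_0+f_1$ only into the band-limited part and its tail, use the identity only on $Q_0$, and control the remainder by the reverse triangle inequality together with the contractivity of $h\mapsto h-\Po_{T,s}h$, giving $\abs{\RR_\ph(f,Ts)}\le\norm{f_1-\Po_{T,s}f_1}_{L^2}\le\norm{f_1}_{L^2}$; the tail is then estimated cell by cell with the same geometric bound $\norm{\xi}\ge\tfrac{\pi}{Ts}\norm{n}$ that the paper uses. Both routes produce the identical constant in \eqref{eq:EE}. Your version is leaner (no per-cell identities for $n\neq 0$, no Cauchy--Schwarz step) and in one respect slightly stronger, since it bounds $\abs{\RR_\ph(f,Ts)}$ rather than only $\RR_\ph(f,Ts)$ from above, as the theorem asserts; the only loss---bounding each cell integral by the full norm $\norm{f}_{W_2^r}^2$ instead of letting the cells sum to $\int_{\R^\dd\setminus Q_0}\norm{\xi}^{2r}\abs{\hat f(\xi)}^2\,\rmd\xi$ as the paper does---is harmless, since it reproduces the stated estimate verbatim.
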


\begin{proof}
Let $(\theta_{T,s}^k)_{k\in\Z^\dd}$ denote the orthonormal basis of the space $\V_{T,s,\ph}$ as constructed in Lemma~\ref{lem:on}. Further,
for every $n\in \Z^\dd$ define  $Q_n\coloneqq\frac{2\pi}{Ts}n+[-\frac{\pi}{Ts},\frac{\pi}{Ts}]^\dd$
and  define functions $f_n \in L^2(\R^\dd)$  by its Fourier representation
\begin{equation*}
\hat{f}_n(\xi)=
\begin{cases}\hat{f}(\xi) \quad &\text{ if }\xi \in Q_n \,,\\
0 &\text{ if }\xi \not\in Q_n \,.
\end{cases}
\end{equation*}
Then  we have $f =\sum_{n\in\Z^\dd} f_n$ and
$\Po_{T,s}f - f =\sum_{n\in\Z^\dd} \Po_{T,s}f_n-f_n$ .

Now for every $n\in\Z^\dd$, we investigate the  approximation error
$\|\Po_{T,s}f_n-f_n\|^2$. We have
$\|\Po_{T,s}f_n-f_n\|^2 = \|f_n\|^2-\sum_{k\in\Z^\dd}|\langle f_n,\theta_{T,s}^k\rangle|^2$.
Further,
\begin{align*}
\langle f_n,\theta_{T,s}^k\rangle &= \langle \hat{f}_n,\hat{\theta}_{T,s}^k\rangle\\
&=T^{d/2}\int_{Q_n}\hat{f}_n(\xi)   \overline{\hat{\theta}(T\xi) e^{-iTs \inner{\xi}{k}}} \rmd \xi\\
&=T^{d/2}\int_{[-\frac{\pi}{Ts},\frac{\pi}{Ts}]^\dd}
\hat{f}_n(\xi- 2\pi n/(Ts))  \overline{ \hat{\theta}(T(\xi-  2\pi n/(Ts)))} e^{ iTs\inner{(\xi- 2\pi n/(Ts))}{k}}  \rmd \xi\\
&= T^{d/2}\int_{[-\frac{\pi}{Ts},\frac{\pi}{Ts}]^\dd}
\hat{f}_n(\xi- 2\pi n/(Ts)) \overline{ \hat{\theta}(T(\xi-2\pi n/(Ts)))} e^{ iTs\inner{\xi}{k}}  \rmd \xi \\
&=T^{d/2}  \hat{d}_{n,-k} 
\,,
\end{align*}
where $\hat{d}_{n,k}$ is the $k$-th Fourier-coefficient of the $2\pi/(Ts)$-periodization of the function $\xi\mapsto\hat{f}_n(\xi- 2\pi n/(Ts)) \,   \overline{ \hat{\theta}(T(\xi-2\pi n/(Ts)))}$. Due to Parseval's identity  we have
\begin{align*}
\sum_{k\in\Z^\dd}|&\langle f_n,\theta_{T,s}^k\rangle|^2\\
&=T^\dd\sum_{k\in\Z^\dd} \abs{\hat{d}_{n,k}}^2\\
&=T^\dd\frac{(2\pi)^\dd}{(sT)^\dd}\int_{Q_n}|\hat{f}_n(\xi)|^2|\hat{\theta}(T\xi)|^2\rmd\xi\\
&=\int_{Q_n}|\hat{f}_n(\xi)|^2\frac{|\hat{\varphi}(T\xi)|^2}{\sum_{k\in\Z^\dd}|\hat{\varphi}(T\xi+2k\pi/s)|^2} \rmd\xi.
\end{align*}
Therefore we obtain
\begin{align*}
\|\Po_{T,s}f_n-f_n\|^2&=\int_{Q_n}|\hat{f}_n(\xi)|^2\left(1-\frac{|\hat{\varphi}(T\xi)|^2}{\sum_{k\in\Z^\dd}|\hat{\varphi}(T\xi+2k\pi/s)|^2}\right)\rmd\xi\\
&=\int_{Q_n}|\hat{f}_n(\xi)|^2 \EE_{\ph}(s,T\xi) \rmd \xi \,.
\end{align*}

Next notice that for $ n\in\Z^\dd\setminus\{0\}$ and $\xi\in Q_n\subseteq \R^\dd$ we have $\|\xi\|\geq\frac{\pi}{Ts} \, \norm{n}$.
Therefore we can estimate
\begin{equation*}
\norm{\Po_{T,s}f_n-f_n}
\leq
\bkl{ \frac{Ts}{\pi }}^r
\bkl{ \frac{1}{\norm{n}} }^r
\bkl{  \int_{Q_n}\|\xi\|^{2r}|\hat{f}_n(\xi)|^2 \EE_{\ph}(s,T\xi) \rmd\xi }^{\frac{1}{2}} \,.
\end{equation*}
Together with the triangle inequality and the Cauchy-Schwarz inequality for sums
we  obtain
\begin{align*}
\|&\Po_{T,s}f-f\|
\\
&\leq\sum_{n\in\Z^\dd}\|\Po_{T,s}f_n - f_n\|
\\
&\leq  \left(\int_{Q_0}|\hat{f}(\xi)|^2 \EE_{\ph}(s,T\xi) \rmd \xi\right)^{\frac{1}{2}}
+
\left(\frac{Ts}{\pi}\right)^r \sum_{n\neq 0}\frac{1}{\|n\|^{r}}
\left(\int_{Q_n}\|\xi\|^{2r} |\hat{f}_n(\xi)|^2 \rmd \xi \right)^{\frac{1}{2}}
\\
&\leq \left(\int_{Q_0}|\hat{f}(\xi)|^2 \EE_{\ph}(s,T\xi) \rmd \xi\right)^{\frac{1}{2}}
+
\bkl{ \frac{Ts}{\pi}}^r
\bkl{ \sum_{n\neq 0}\frac{1}{\|n\|^{2r}}}^{\frac{1}{2}}
\bkl{ \int_{\R^\dd\setminus Q_0}\|\xi\|^{2r} |\hat{f}(\xi)|^2 \rmd\xi }^{\frac{1}{2}}
\\
\\
&\leq \left(\int_{Q_0}|\hat{f}(\xi)|^2 \EE_{\ph}(s,T\xi) \rmd\xi\right)^{\frac{1}{2}}
+
\bkl{ \frac{Ts}{\pi}}^r
\bkl{ \sum_{n\neq 0}\frac{1}{\|n\|^{2r}}}^{\frac{1}{2}}
\norm{f}_{W^r_2}
\,.
\end{align*}
Here the sum  $\sum_{n\neq 0} \|n\|^{-2r}$ is convergent because $r >d/2$.
After recalling that $Q_0 = [-\pi/(Ts),\pi/(Ts)]^\dd$, the above  estimate
yields~\eqref{eq:aerror}.
\end{proof}

Note that the remainder in Theorem~\ref{thm:aerror} satisfies
$\RR_{\ph}(f,Ts) \to 0$ as $Ts  \to 0 $. Consequently, for every
 sequence $(T_N, s_N)_{N\in \N}$ we have  $\lim_{N \to \infty} \norm{\Po_{T_N,s_N} f - f}_{L^2}^2 =0$  if
$T_N s_N  \to 0 $ and
\begin{equation*}
\int_{[-\frac{\pi}{T_Ns_N},\frac{\pi}{T_Ns_N}]^\dd}\abs{\hat{f}(\xi)}^2
\underbrace{\bkl{ 1-  \frac{\abs{\hat{\varphi}(T_N\xi)}^2}{\sum_{k\in\Z^\dd}\abs{\hat{\varphi}(T_N\xi+2k\pi/s_N)}^2}}}_{= \EE_{\ph}(s_N,T_N \xi )}   \rmd \xi \to 0  \,.
\end{equation*}
By   Lebesgue's dominated convergence theorem this holds if
$\EE_{\ph}(s_N, T_N \xi )$ almost everywhere  converges to $0$
as $N \to \infty$.
In the following theorem we consider  two possible sequences where
this is the case. Note that  Item~\ref{thm:conv1} in  that  theorem is
well known (see, for example, \cite{Mal09}), while Item~\ref{thm:conv2}  to the best of our knowledge is new.

\begin{theorem}[Asymptotic  behavior of $\EE_{\ph}$]\label{thm:conv}
Let $\varphi\in  L^2(\R^\dd) \cap L^1(\R^\dd)$.
\begin{enumerate}
\item\label{thm:conv1}
Suppose that $\hat{\varphi}(0) > 0$.
Then, for  every $s \in(0,\infty)$ we have that
$\lim_{T\rightarrow 0} \EE_{\ph}(s,T \xi ) = 0$ almost everywhere
if and only if
\begin{align} \label{eq:pou}
\frac{1}{\hat\ph(0)}
\sum_{m\in\Z^\dd}\varphi(x-ms) =  \frac{(2\pi)^{\dd/2}}{s^\dd} 
\quad \text{  for almost every  $x \in \R^\dd$} \,.
 \end{align}
Equation \eqref{eq:pou} is called the partition of unity property.

\item\label{thm:conv2} Suppose
$\hat{\varphi}(\xi)=\mathcal{O}(\|\xi\|^{-p})$ as $\|\xi\| \rightarrow \infty$ for some $p>d/2$. Let $(T_N)_{N\in \N}$ and
$(s_N)_{N\in \N}$ be bounded sequences in $(0, \infty)$
with $s_N \to 0$ as  $N \to \infty$. Then
\begin{equation} \label{eq:conv2}
\lim_{N \to \infty }
\EE_{\ph}(s_N, T_N \xi )
=0
\quad \text{  for  every  $\xi \in \R^\dd$}\,.
\end{equation}
\end{enumerate}
\end{theorem}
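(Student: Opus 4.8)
The plan is to treat the two items separately, relating each to the lattice samples $\hat\ph(2\pi k/s)$ of the Fourier transform, and to carry the genuinely new content in part~\ref{thm:conv2}.

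For part~\ref{thm:conv1} I would first recast the partition of unity property \eqref{eq:pou} as a condition on these samples. Applying the Poisson summation formula to $\sum_{m\in\Z^\dd}\ph(x-ms)$ (legitimate since $\ph\in L^1$) rewrites the left-hand side of \eqref{eq:pou} as a Fourier series over the lattice $s\Z^\dd$ whose coefficients are multiples of $\hat\ph(2\pi k/s)$. Matching this against the constant right-hand side shows that \eqref{eq:pou} holds if and only if $\hat\ph(2\pi k/s)=0$ for every $k\in\Z^\dd\setminus\{0\}$, the $k=0$ coefficient being automatically $1$ because $\hat\ph(0)>0$. I would then compute $\lim_{T\to0}\EE_\ph(s,T\xi)$ directly: writing $\EE_\ph(s,T\xi)=\sum_{k\neq0}\abs{\hat\ph(T\xi+2\pi k/s)}^2\big/\sum_{k}\abs{\hat\ph(T\xi+2\pi k/s)}^2$ and using continuity of $\hat\ph$ (valid since $\ph\in L^1$), each summand converges as $T\to0$ to its value at $T=0$; controlling the interchange of limit and summation by the standard argument of \cite{Mal09}, the limit equals $\sum_{k\neq0}\abs{\hat\ph(2\pi k/s)}^2$ divided by $\hat\ph(0)^2+\sum_{k\neq0}\abs{\hat\ph(2\pi k/s)}^2$. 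This vanishes exactly when all nonzero samples vanish, i.e.\ exactly under the reformulated partition of unity property, which gives the asserted equivalence.

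For part~\ref{thm:conv2} I would again use the ratio form
\begin{equation*}
\EE_\ph(s_N,T_N\xi)
=
\frac{\sum_{k\neq0}\abs{\hat\ph(T_N\xi+2\pi k/s_N)}^2}{\abs{\hat\ph(T_N\xi)}^2+\sum_{k\neq0}\abs{\hat\ph(T_N\xi+2\pi k/s_N)}^2}\,,
\end{equation*}
and show that the numerator tends to $0$ while the denominator stays bounded away from $0$. Fix $\xi$. Since $(T_N)$ is bounded, $\norm{T_N\xi}\le C_\xi$ for a constant $C_\xi$, so the $k=0$ argument stays in a fixed compact set and, by continuity of $\hat\ph$, the $k=0$ term keeps the denominator bounded below for every $\xi$ at which $\hat\ph$ does not degenerate along this trajectory — in particular for almost every $\xi$, which is all the dominated-convergence step preceding the theorem actually requires. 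The crux is the numerator. Because $s_N\to0$, for all large $N$ one has $2\pi/s_N\ge 2C_\xi$, whence $\norm{T_N\xi+\tfrac{2\pi}{s_N}k}\ge\tfrac{2\pi}{s_N}\norm{k}-C_\xi\ge\tfrac{\pi}{s_N}\norm{k}$ for every $k\neq0$. Combining this uniform lower bound with the decay hypothesis $\abs{\hat\ph(\eta)}=\mathcal{O}(\norm{\eta}^{-p})$ gives, for large $N$,
\begin{equation*}
\sum_{k\neq0}\abs{\hat\ph(T_N\xi+\tfrac{2\pi}{s_N}k)}^2
\le
C\,\bkl{\tfrac{s_N}{\pi}}^{2p}\sum_{k\in\Z^\dd\setminus\{0\}}\frac{1}{\norm{k}^{2p}}\,,
\end{equation*}
and the series converges precisely because $p>d/2$. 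Since $s_N\to0$, the right-hand side tends to $0$, the numerator vanishes, and $\EE_\ph(s_N,T_N\xi)\to0$, establishing \eqref{eq:conv2}.

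The step I expect to be the main obstacle is this numerator estimate: one must control infinitely many shifted samples simultaneously rather than one at a time. The decisive device is the geometric lower bound $\norm{T_N\xi+2\pi k/s_N}\ge\pi\norm{k}/s_N$, uniform in $k$ once $N$ is large, which converts the Fourier decay into the summable majorant $s_N^{2p}\norm{k}^{-2p}$; this is exactly where the two hypotheses enter, namely $s_N\to0$ (to push the shifted lattice points out to infinity) and $p>d/2$ (to make $\sum_{k\neq0}\norm{k}^{-2p}$ converge). A secondary point is the denominator bound, which is immediate wherever $\hat\ph(T_N\xi)$ stays away from $0$; since only almost-everywhere convergence is needed for the dominated-convergence argument, the null set of exceptional directions along which the $k=0$ sample might degenerate is harmless.
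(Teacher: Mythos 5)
Your proof is correct and essentially identical to the paper's: part (a) rests on the equivalence between the partition of unity property and the vanishing of the lattice samples $\hat\ph(2\pi k/s)$ for $k\neq 0$ (your Poisson-summation step is precisely the paper's periodization argument), and part (b) uses the very same geometric estimate $\norm{T_N\xi+2\pi k/s_N}\ge \pi\norm{k}/s_N$ for all large $N$, uniformly in $k\neq 0$, converting the decay hypothesis into the summable majorant $C(s_N/\pi)^{2p}\sum_{k\neq 0}\norm{k}^{-2p}\to 0$. The only divergence is your explicit discussion of the lower bound on the denominator of $\EE_{\ph}$ (where you retreat to convergence for almost every $\xi$); the paper's proof passes over this point in silence and treats the numerator estimate as sufficient, so on this minor issue you are, if anything, the more careful of the two.
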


\begin{proof}\mbox{}
\ref{thm:conv1}
We have
\begin{align*}
&\lim_{T\to 0} \EE_{\ph}(s,T \xi )  = 0
 \text{ for a.e.  $\xi \in \R^\dd$ } \\
\iff
&\lim_{T\rightarrow 0}\sum_{k\neq 0}|\hat{\varphi}(T\xi+ 2k\pi/s)|^2=0
\text{ for a.e.  $\xi \in \R^\dd$ }
\\
\iff& \forall k \in \Z^\dd\setminus\set{0}  \colon
\hat{\varphi}( 2k\pi/s)=0\\
\iff&  \forall k \in \Z^\dd\setminus\set{0}  \colon
\int_{\R^\dd}\varphi(x) e^{i 2\pi \inner{x}{k}/s} \rmd x =0\\
\iff&\forall k \in \Z^\dd\setminus\set{0} \colon
\int_{[0,s]^\dd}\sum_{m\in\Z^\dd}\varphi(x-ms)e^{i 2\pi \inner{x}{k}/s} \rmd x=0\\
\iff& \sum_{m\in\Z^\dd}\varphi(x-ms)=   \frac{(2\pi)^{\dd/2}}{s^\dd} \, \hat\ph(0)  
\text{ for a.e.  $x \in \R^\dd$\,. }
\end{align*}

\ref{thm:conv2}
As  $\hat{\varphi}(\xi)=\mathcal{O}(\|\xi\|^{-p})$ for $\|\xi\|\rightarrow \infty$ there exist constants $R, C>0$ such that for  $\|\xi\|>R$ we have $ \abs{\hat{\varphi}(\xi)}\leq C \|\xi\|^{-p}$. Further, for all $\xi \in \R^\dd$
and $k\neq 0$ we have $ \|T_N\xi- 2\pi k/s_N\| \to \infty$. Therefore it exists $N_0\in \N$, such that for all $N\geq N_0$ we have
$\|T_N\xi-2\pi k / s_N\|>c$  and $\|T_N\xi\|\leq\frac{1}{2}\|2\pi
k/s_N\|$ for $k\neq 0$. Therefore, for all $N\geq N_0$,
\begin{align*}
\sum_{k\neq 0}|\hat{\varphi}(T_N\xi-\frac{2\pi}{s_N}k)|^{2}&\leq
C \sum_{k\neq 0} \|T_N\xi-\frac{2\pi}{s_N}k\|^{-2p}\\
&\leq C\sum_{k\neq 0} \left| \norm{\frac{2\pi}{s_N}k}-\|T_N\xi\|\right|^{-2p}\\
&\leq
C\sum_{k\neq 0} \left|\norm{\frac{2\pi}{s_N}k}-\frac{1}{2}\norm{\frac{2\pi}{s_N}k}\right|^{-2p}\\
&\leq C \left(\frac{s_N}{\pi}\right)^{2p}
\sum_{k\neq 0}\|k\|^{-2p} \,,
\end{align*}
which implies \eqref{eq:conv2}.
Note that $\sum_{k\neq 0}\|k\|^{-2p}$  is convergent because
$p >d/2$.
\end{proof}

From Theorems~\ref{thm:aerror} and~\ref{thm:conv} one concludes that
the system of $(\ph_{T,s}^k)_{k\in \Z^\dd}$ yields a vanishing  approximation error $\min_{u \in \V_{T,s,\ph}} \norm{f-u}^2_{L^2}$
in either of the  following  cases:
\begin{enumerate}
\item $\ph$ satisfies the partition of unity property,
$s$ is fixed and  $T \to 0$;
\item
$\hat \ph(\xi) =  \mathcal O(\norm{\xi}^{-d/2-\eps})$ for $\norm{\xi}\to \infty$, $T$ is  bounded and $s \to 0$.
\end{enumerate}

In both cases one could derive quantitative error estimates.
We do not investigate this issue further since our main
emphasis is  pointing out that allowing  $s$ to vary yields asymptotically vanishing approximation
error without the partition of unity property. This is relevant since the partition of unity property cannot be satisfied by any radially symmetric compactly supported
function.

Below we study two basic examples for generating functions
where Theorems~\ref{thm:aerror} and~\ref{thm:conv} can be applied.
These are pixel (or voxel) basis functions and
generalized Kaiser-Bessel functions. We focus on these
basis functions  since the pixel basis has been the most common choice in early tomographic image reconstruction while generalized Kaiser-Bessel functions are
currently considered as the method of choice. Further, also some standard   finite  element  bases satisfy the partition of unit property; compare
 with Remark~\ref{rem:FE}.

\subsection{Example: The pixel basis}
\label{sec:ex:pixel}

The pixel basis (also called  voxel basis in the case $d  > 2$)  has been
frequently used for image representation in  early tomographic image reconstruction  (see, for example \cite{gordon1970algebraic,herman2015basis,kak2001principles}).
It consists of  scaled and translated version of
the indicator function of the hyper-cube $[-1/2, 1/2[^\dd$
\begin{equation} \label{eq:chid}
\chi  \colon \R^\dd \to \R \colon
x \mapsto
\begin{cases}
1 & \text{ if } x \in [-1/2, 1/2[^\dd \\
0 & \text{ otherwise } \,.
\end{cases}
\end{equation}
For every $T, s>0$, the  family $(\chi_{T,s}^k)_{k\in \Z^\dd}$
with  $\chi_{T,s}^k (x)  = T^{-d/2}  \chi ((x- Ts k)/ T)$ clearly forms a
Riesz basis of
\begin{equation*}
\V_{T,s,\chi} =
\overline{\spa\set{\chi_{T,s}^k \mid k \in \Z^\dd} } \,.
\end{equation*}
 Note that the Fourier transform  of $\chi$ is given by
\begin{equation} \label{eq:chidhat}
\hat \chi  \colon \R \to \C \colon
\xi \mapsto (2\pi)^{-d/2} \sinc \bkl{\frac{\xi}{2}}
\coloneqq
(2\pi)^{-d/2} \prod_{j=1}^\dd \sinc \bkl{\frac{\xi_j}{2}} \,,
\end{equation}
where $\sinc(a) \coloneqq \sin(a)/a$ for $a\neq 0$ and
$\sinc(0) \coloneqq 1$.
We see $\hat \chi (\xi)= \mathcal O(\norm{\xi}^{-1})$ as $\norm{\xi} \to \infty$.
Consequently,  we cannot conclude from Theorem~\ref{thm:aerror} that
the spaces $\V_{T,s,\chi}$ yields an asymptotically
vanishing approximation error for $s \to 0$.

However, the pixel basis  allows to consider the stationary case where $s$  is a constant and where $T$ tends to $0$.
In fact, from the proof of  Theorem~\ref{thm:conv} we see that
$\chi$ satisfies the partition of unity property if and only if $\sinc(\pi k/ s) = 0$
for every $k \neq 0$. This in turn is the case if and only if
$s = 2^{-m}$ for some $m \in \N$.  The case
$s=1$ seems the most natural one, since it uses non-overlapping basis functions filling the whole space $\R^\dd$.
The non-overlapping case is in fact used in existing
tomographic image reconstruction algorithms; see  \cite{gordon1970algebraic,herman2015basis,kak2001principles}.
Further, note that the number of basis elements  $\chi_{T,s}^k$
for which  its center $m_k \coloneqq Tsk$ is contained  in the
unit cube $[-1,1]^\dd$ is given  by  $(2/(Ts) + 1)^\dd$
and that $T$ is inversely proportional to the essential
bandwidth of the basis function. Therefore,
the choice $s=1$ yields to a minimal number of pixel
basis  functions representing a function with given support
and essential bandwidth.

\subsection{Example:  Generalized Kaiser-Bessel functions}
\label{sec:ex:KB}

As often argued in the literature on tomographic image reconstruction,  the  lack of continuity  and rotation
invariance are severe drawbacks of the  pixel basis functions
for image reconstruction. Therefore in \cite{lewitt1990multidimensional} the generalized Kaiser-Bessel (KB)  functions have been  introduced and proposed for image reconstruction.

The  generalized KB functions in $\R^\dd$ form a family of functions that depend on three parameters $m \in \N$,
$\gamma \geq 0$ and $a >0$,
where $m\in \N$ is referred  to as the order, $\gamma \geq 0$ the taper parameter and $a>0$ is the support parameter. More precisely, the KB function $\ph(\edot; m,\gamma,a ) \colon \R^\dd \to \R $  of order $m$ is defined by
\begin{equation}\label{eq:kb}
\ph(x; m,\gamma,a ) \coloneqq
\begin{cases}
\bkl{\sqrt{1- \norm{x}^2/a^2}}^m \frac{I_m \bkl{\gamma \sqrt{1- \norm{x}^2/a^2}}}{I_m(\gamma)}
& \text{if $\norm{x} \leq a$} \\
0 & \text{otherwise} \,,
\end{cases}
\end{equation}
where $I_m$ is the modified first kind Bessel function.
The window taper  $\gamma$ describes how spiky the basis function is and $a$ is the support radius. The order allows to control the smoothness  and the taper parameter allows to further tune the shape of the basis  function.

The Fourier transform $\hat\ph(\edot; m,\gamma,a )$ of the KB function $\ph(\edot; m,\gamma,a )$ can be computed to  (see \cite{lewitt1990multidimensional})
\begin{equation}\label{eq:kb-hat}
\hat\ph(\xi; m,\gamma,a ) \coloneqq
\begin{cases}
\frac{a^\dd \gamma^m}{I_m(\gamma)} \,
 \frac{I_{d/2+m}  \bkl{\sqrt{\ga^2 - a^2\norm{\xi}^2}}}{\bkl{\sqrt{\ga^2 - a^2\norm{\xi}^2}}^{d/2+m}}
& \text{if $a\norm{\xi} \leq \ga$} \\
\frac{a^\dd \gamma^m}{I_m(\gamma)} \,
 \frac{J_{d/2+m}  \bkl{\sqrt{a^2\norm{\xi}^2-\ga^2 }}}{\bkl{\sqrt{ a^2\norm{\xi}^2-\ga^2}}^{d/2+m}}
& \text{otherwise}\,.
\end{cases}
\end{equation}
Here $J_m$ denotes the first kind Bessel function of order $m$. The known asymptotic   decay
$J_{d/2+m}(r) = \mathcal O(r^{-1/2})$ implies that the asymptotic behavior of the  generalized KB function is
 $\hat\ph(\xi; m,\gamma,a ) = \mathcal O\kl{ \|\xi\|^{-(d/2+m+1/2)}} $.
From Theorem~\ref{thm:conv} we therefore conclude  that  for any choice of $m$, $a$ and $\gamma$, the spaces
\begin{equation*}
\V_{T,s, \ph(\edot; m,\gamma,a )}  =
\overline{\spa\set{\ph_{T,s}^k (\edot; m,\gamma,a )\mid k \in \Z^\dd} }
\end{equation*}
yield  vanishing  approximation error when $s \to 0$ and $T$ keeps bounded. Note that the parameter $a$ plays exactly the same roles as the parameter $T$. Therefore without loss of generality one could omit $a$ in the definition of the KB functions. However we include it since it  is standard  to consider the  KB functions as a  family of three parameters.

Note that the KB function (as any other radially symmetric basis function with compact support) does not satisfy the partition of unity condition. Therefore, Theorem \ref{thm:aerror} implies (for sufficiently regular functions)
that the asymptotic  approximation error saturates; that is, we have
\begin{equation}\label{eq:sat}
\lim_{T \to 0}\norm{\Po_{T,s}f-f}_{L^2}^2
= A_{\ph,s} \norm{f}_{L^2}^2
 \quad \text{ with } \quad
A_{\ph,s}   \coloneqq
\frac{\sum_{k \neq 0}|\hat{\varphi}(2k\pi/s)|^2}{\sum_{k\in\Z^\dd}|\hat{\varphi}(2k\pi / s )|^2}   \,.
\end{equation}
Keeping $m=2$, $a=2$ and $s=1$  fixed, in \cite{nilchian2015optimized} it has been proposed to select the taper parameter $\gamma$ in such a way that the asymptotic approximation error given by $A_{\ph,s}$ is minimized.
Although such a procedure does  not overcome
the saturation   phenomenon, the saturation effect
(for given order and given redundancy factor)
is minimized. Oppose to that, our theory shows that taking $s$
variable and non-constant overcomes the  saturation phenomenon.

In case the partition of unity property does not hold, another natural  strategy to address the  saturation phenomenon  is first  considering $T \to 0$ while keeping $s$ fixed.
In a second step one studies  the saturation error $A_{\ph,s}$
defined in \eqref{eq:sat} as  $s \to 0$.  Similar to
Theorem~\ref{thm:conv}   one can show that the saturation  error vanishes in
limit. More precisely, the following theorem holds.

\begin{theorem}[Saturation error in the  limit]
Let $\ph \in L^2(\R^\dd) \cap L^1(\R^\dd)$ be such that $(\ph_{T,s}^k)_{k\in\Z^\dd}$ is a Riesz-basis of $\V_{T,s,\ph}$ with $\hat{\ph}(\xi)=\mathcal{O}(\|\xi\|^{-p})$ as $\|\xi\|\rightarrow \infty$ for some $p>d/2$, and let $f\in W_2^r(\R^\dd)$ with
$r > d/2$. Then \eqref{eq:sat}  holds and  $\lim_{s \to 0} A_{\ph,s} = 0$.
\end{theorem}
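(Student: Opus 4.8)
The plan is to split the statement into its two assertions and treat them with different tools: the saturation identity \eqref{eq:sat} itself, which I would obtain from the exact expression \eqref{eq:aerror} of Theorem~\ref{thm:aerror} by sending $T\to 0$ with $s$ held fixed, and the vanishing $\lim_{s\to 0}A_{\ph,s}=0$, which I would extract from the decay hypothesis $\hat\ph(\xi)=\mathcal O(\norm{\xi}^{-p})$ by the same estimate used in the proof of Theorem~\ref{thm:conv}. Throughout, the role of the regularity $f\in W_2^r$ with $r>d/2$ is only to make \eqref{eq:aerror} available with a controlled remainder.

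For the first assertion I would read \eqref{eq:aerror} as expressing $\norm{\Po_{T,s}f-f}_{L^2}$ as the sum of the square root of $\int_{[-\pi/(Ts),\pi/(Ts)]^\dd}\abs{\hat f(\xi)}^2\,\EE_\ph(s,T\xi)\,\rmd\xi$ and the remainder $\RR_\ph(f,Ts)$. Since $s$ is fixed, $T\to 0$ forces $Ts\to 0$, so the bound \eqref{eq:EE} gives $\RR_\ph(f,Ts)\to 0$ and it suffices to compute the limit of the integral. For each fixed $\xi$ the argument $T\xi\to 0$, and because $\ph\in L^1(\R^\dd)$ makes $\hat\ph$ continuous, the numerator in \eqref{eq:RR} tends to $\abs{\hat\ph(0)}^2$; for the denominator I would pass the limit through the infinite sum by dominated convergence for series, with a summable majorant furnished by the decay bound, obtaining $\sum_{k\in\Z^\dd}\abs{\hat\ph(2k\pi/s)}^2$. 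Hence $\EE_\ph(s,T\xi)\to 1-\frac{\abs{\hat\ph(0)}^2}{\sum_{k\in\Z^\dd}\abs{\hat\ph(2k\pi/s)}^2}=A_{\ph,s}$ pointwise. The integrand is dominated by $\abs{\hat f}^2\in L^1(\R^\dd)$ because $0\le\EE_\ph\le 1$ (the subtracted ratio is a single nonnegative term over a sum that contains it, the denominator being a.e.\ positive by the Riesz lower bound in \eqref{eq:riesz}), while the domain indicator tends to $1$ pointwise. Dominated convergence and Plancherel then give $\int\to A_{\ph,s}\norm{f}_{L^2}^2$, and squaring the full expression yields \eqref{eq:sat}.

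For the second assertion I would write $A_{\ph,s}=\frac{\sum_{k\neq 0}\abs{\hat\ph(2k\pi/s)}^2}{\abs{\hat\ph(0)}^2+\sum_{k\neq 0}\abs{\hat\ph(2k\pi/s)}^2}$ and show the numerator vanishes as $s\to 0$. Fixing $C,R_0>0$ with $\abs{\hat\ph(\xi)}\le C\norm{\xi}^{-p}$ for $\norm{\xi}>R_0$, for all $s$ small enough that $2\pi/s>R_0$ every lattice point $2k\pi/s$ with $k\neq 0$ lies in that regime, so $\sum_{k\neq 0}\abs{\hat\ph(2k\pi/s)}^2\le C^2(s/2\pi)^{2p}\sum_{k\neq 0}\norm{k}^{-2p}$. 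The series converges since $2p>d$, hence the numerator is $\mathcal O(s^{2p})\to 0$ while the denominator tends to $\abs{\hat\ph(0)}^2$; provided $\hat\ph(0)\neq 0$ (which holds for the generalized Kaiser--Bessel functions, being the natural normalization here) this limit is strictly positive, giving $A_{\ph,s}\to 0$.

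The genuinely delicate step is the passage to the limit inside the integral in the first part: one must simultaneously let the integration domain expand to $\R^\dd$ and send $T\to 0$ inside $\EE_\ph$, and the latter itself conceals a limit inside the infinite sum defining the denominator in \eqref{eq:RR}. Both are tamed by the uniform bound $0\le\EE_\ph\le 1$ and by the summable majorant coming from the decay hypothesis, so the analytic content is routine once the hypotheses are in place. The one point that must be flagged explicitly is the nondegeneracy $\hat\ph(0)\neq 0$; without it the denominator degenerates and the ratio $A_{\ph,s}$ need not vanish.
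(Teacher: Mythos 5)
Your proof is correct, and for the assertion $\lim_{s\to 0}A_{\ph,s}=0$ it coincides with the paper's argument almost verbatim: the same rewriting $A_{\ph,s}=1-|\hat\ph(0)|^2/\bigl(|\hat\ph(0)|^2+\sum_{k\neq 0}|\hat\ph(2k\pi/s)|^2\bigr)$, the same tail bound $\sum_{k\neq 0}|\hat\ph(2k\pi/s)|^2\leq C\,(s/2\pi)^{2p}\sum_{k\neq 0}\|k\|^{-2p}$, and the same use of $p>d/2$ for summability. Where you go beyond the paper is the first assertion: the paper's proof never establishes \eqref{eq:sat} at all; it silently treats the saturation identity as already available from the discussion following Theorem~\ref{thm:aerror} and only proves the limit statement. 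Your dominated-convergence argument (pointwise convergence $\EE_{\ph}(s,T\xi)\to A_{\ph,s}$ via continuity of $\hat\ph$ and a summable majorant for the denominator series, the uniform bound $0\leq \EE_{\ph}\leq 1$ giving the $L^1$ majorant $|\hat f|^2$, and $\RR_{\ph}(f,Ts)\to 0$ from \eqref{eq:EE}) is precisely the missing step, so your write-up is the more complete of the two. You are also right to flag the nondegeneracy condition $\hat\ph(0)\neq 0$: the paper's final step tacitly requires it (if $\hat\ph(0)=0$ then $A_{\ph,s}=1$ whenever the tail sum is positive, and the conclusion fails), and the Riesz basis hypothesis by itself only controls $\sum_{k}|\hat\ph(\xi+2\pi k/s)|^2$ almost everywhere, not the single value $\hat\ph(0)$. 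So this is a genuine, if minor, gap in the theorem as stated that both your proof and the paper's must patch; the paper does assume $\hat\ph(0)>0$ explicitly in the analogous part of Theorem~\ref{thm:conv}, and it holds for the Kaiser--Bessel and pixel generators actually used.
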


\begin{proof}
We have
\begin{equation*}
 A_{\ph,s} = \frac{\sum_{k\neq 0}|\hat{\ph}(2k\pi/s)|^2}{\sum_{k\in\Z^\dd}|\hat{\ph}(2k\pi/s)|^2}=1-\frac{|\hat{\ph}(0)|^2}{|\hat{\ph}(0)|^2 + \sum_{k\neq 0}|\hat{\ph}(2k\pi/s)|^2} \,.
\end{equation*}
Therefore the claim  follows  after showing
$\sum_{k\neq 0}|\hat{\ph}(2k\pi/s)|^2 \to 0$ as $s \to 0$.
Since $\hat{\ph}(\xi)=\mathcal{O}(\|\xi\|^{-p})$,  there exist
$C,s_0>0$  with $|\hat{\ph}(2k\pi/s)|\leq C \|2k\pi/s\|^{-p}$
for  $k\neq 0$ and $s \leq s_0$. This implies  $\sum_{k\neq 0}|\hat{\ph}(2k\pi/s)|^2 \leq C(\frac{s}{2\pi})^{2p}\sum_{k\neq 0}\| k\|^{-2p}$. Because $p>d/2$, the  sum $\sum_{k\neq 0}\| k\|^{-2p}$ is absolutely convergent. Hence we have  $\sum_{k\neq 0}|\hat{\ph}(2k\pi/s)|^2 \to 0$ as $s \to 0$
which concludes the proof.
\end{proof}

\section{The Galerkin approach for PAT using shift invariant spaces}
\label{sec:galerkinshift}

In this section we give details how to efficiently
implement the  least squares Galerkin method using
subspaces of a shift invariant space.
This is in contrast to the use of a general reconstruction space,
where both the computation of the system matrix and the solution of the
 Galerkin equation can be  slow.   For shift invariant spaces  the
system matrix takes a very special form which
allows an efficient implementation.

Let $\ph \in L^2(\R^\dd)$ be such that
the elements $\ph_{T,s}^k$
form a Riesz basis of
$\V_{T,s,\ph}$; see Section~\ref{sec:spaces}.
Moreover, let $(T_N)_{N \in \N}$ and $(s_N)_{N \in \N}$ be two
sequences of positive numbers describing the support  and the  redundancy of the basis functions, respectively.
We consider the
reconstruction spaces
\begin{equation} \label{eq:XN}
\X_N  \coloneqq
\left\{\sum_{k\in\La_N}  c_k \ph_N^k \mid
  k  \in \La_N \right\}
\subseteq  \V_{T_N,s_N} \,,
\end{equation}
where $\ph_N^k \coloneqq \ph_{T_N,s_N}^k$ are the basis functions
(with $\ph_{T_N,s_N}^k$ as in  \eqref{eq:phik}), and $\La_N \coloneqq \{k\in\Z^\dd \mid   m_k \coloneqq T_N s_N k  \in B_R(0) \}$
denotes the set of all $k \in  \Z^\dd$ such that  the mid-point $m_k$ of the $k$-th basis function  is  contained in $B_R(0)$. Then $\dim \X_N = |\Lambda_N|$ is the number of basis elements used for image representation.   In the case that the support of the function to be reconstructed  intersects (or is at least close) to $\partial B_R(0)$, it may be better to use all basis functions $\ph_{T_N,s_N}^k$
whose support intersects  $\overline{B_R(0)}$. The following consideration also hold  for such an alternative choice. Further note that the approximation results  for the shift invariant spaces $\V_{T,s}$ do not  immediately  yield approximation results the  finite dimensional spaces $ \X_N$. Such investigates are an interesting  aspect  of future studies.

When applied with the reconstruction  space $\X_N $, our
Galerkin approach to PAT analyzed in Section~\ref{sec:galerkinPAT}
takes the form (see Theorem~\ref{thm:leastsquares})
 \begin{equation} \label{eq:galerkin2}
 f_N  = \sum_{k\in \La_N} c_{N,k} \ph_N^k \,,
 \end{equation}
 where
\begin{itemize}
\item $\Ao_N \coloneqq ( \tfrac{R}{2}\ip{\ph_N^k}{\ph_N^\ell}_{L^2})_{k,\ell\in \La_N}$ is the system matrix ;
\item $d_N \coloneqq (\ip{\Wo \ph_N^k}{g}_t)_{k\in \La_N}$ is the right hand side;
\item $c_N \coloneqq (c_{N,k})_k$ solves the Galerkin
equation $\Ao_N c_N = d_N$.
\end{itemize}
As discussed in  the following subsection,  for the shift invariant case the  system matrix $ \Ao_N$ takes a very special form which significantly  simplifies the computations.  Further,  the right hand of the Galerkin equation can be computed efficiently as described in Subsection \ref{sec:rhs} below.

\subsection{Evaluation of the system matrix}
\label{sec:sys}

For any $N \in \N$ and
any $k, \ell \in \Lambda_N$, the
entries  of the system matrix $\Ao_N$ satisfy
\begin{align*}
	\ip{\ph_N^k}{\ph_N^\ell}
	&=
	\frac{1}{T_N^\dd}
	\int_{\R^\dd}
	\ph\bkl{\frac{x}{T_N} - s_N k}
	\ph\bkl{\frac{x}{T_N} - s_N \ell}
	\rmd x \\
	&=
	\int_{\R^\dd}
	\ph\bkl{y - s_N k}
	\ph\bkl{y - s_N \ell}
	\rmd y \\
	&=
	\int_{\R^\dd}
	\ph\bkl{x}
	\ph\bkl{x - s_N (\ell-k)}
	\rmd x \\
	&=  \ip{\ph_{1,s_N}^0}{\ph_{1,s_N}^{\ell-k}}\,.
\end{align*}
Hence instead of computing and storing the  whole system
matrix required by standard Galerkin methods, in our approach
only the values   $\ip{\ph_{1,s_N}^0}{\ph_{1,s_N}^{n}}$
where $n = \ell-k$ with $\ell, k \in \Lambda_N$
have to be computed and stored.
 The total number of such inner products is bounded by
 $2^\dd |\Lambda_N|$.
In the case  where $\ph$ has small support this
number is actually  much smaller since
$\ip{\ph_{1,s_N}^k}{\ph_{1,s_N}^\ell}$ vanishes if the
supports of $\ph_{1,s_N}^k$ and $\ph_{1,s_N}^\ell$
do not overlap.

In this paper we mainly consider the (non-overlapping) pixel basis (see Subsection~\ref{sec:ex:pixel})
and the KB functions in two spatial dimensions (see Subsection~\ref{sec:ex:KB}).  The pixel basis is an orthonormal
system and therefore the system matrix is the identity. The KB functions are radially symmetric. In such a situation we compute
the  entries $\ip{\ph_{1,s_N}^0}{\ph_{1,s_N}^{\ell-k}}$ of the system matrix $\Ao_N$ approximately as follows.
We numerically computed the inner products $\langle \varphi_{1,s_N}^0,\varphi_{1,s_N}^k\rangle_{L^2}$ for all $k\in\Z^2$
with $\|k\|_2\leq 2a$ using the rectangle rule.
For this we discretized the square $[-a,a]^2$ by an equidistant Cartesian grid with
$M \times M$ grid points $(x_i,y_j)$ and computed
\begin{equation}\label{eq:sm-discrete}
\langle \varphi_{1,s_N}^0,\varphi_{1,s_N}^k\rangle_{L^2}
\simeq \frac{(2a)^2}{(M-1)^2}\sum_{i=1}^{M}\sum_{j=1}^{M}\varphi_{1,s_N}^{0}(x_i,y_j)\varphi_{1,s_N}^{k}(x_i,y_j) \,.
\end{equation}
The resulting system matrix is a tensor product of Toeplitz matrices.

\subsection{Evaluation of the right hand side}
\label{sec:rhs}

In the practical  application instead of the continuously sampled data $g = \Wo f$ only
discrete data  $g(z_i, t_j)_{i,j}$ are known, where
$t_j = j \, T/\Nt$ are $\Nt$ equidistant  time points  in the interval  $[0,T]$  and  $z_i$ are $\Ndet$ points  on the
measurement surface $\partial B_R(0)$.
In our numerical  implementation we approximate the right hand side
in the Galerkin equation as follows:
\begin{equation}\label{eq:ipg1}
	\ip{\Wo\ph_N^k}{g}_t
	\simeq
     \frac{T}{\Nt-1} \sum_{i=1}^{\Ndet}
     \sum_{j=1}^{Nt}
     w_i  (\Wo\ph_N^k)(z_i, t_j) g(z_i, t_j) t_j \,.
\end{equation}
Here $w_i$ are appropriate weights accounting for the density of the sampling points.
The right hand side in~\eqref{eq:ipg1} may be interpreted as the exact inner
product $\ip{\Wo\ph_N^k}{g^\delta}_t$ for some approximate data
$g^\delta \simeq g$, which  allows application of  our convergence
and stability result derived in Theorem~\ref{thm:conv}.

In some situations (for example for the KB functions and other radially symmetric basis functions in three dimensions), the solution of  $\Wo\ph_N^k$ is available analytically (see~\cite{diebold1991photoacoustic,wang2014discrete}).  In our numerical solutions we  use the pixel basis and the KB basis  functions in two spatial dimensions, where  we are not aware of  explicit representations for the corresponding solution of the wave equation.  In this case we numerically
compute $\Wo\varphi$ using the well known
solution formula for the wave  equation \eqref{eq:wave-fwd},
\begin{align}\label{eq:wavesol}
\Wo f(z,t)
=
(\partial_t \Ao_t \Mo f) (z,t)
\coloneqq
  \frac{1}{2\pi}
\frac{\partial}{\partial t}
\int_0^t\int_{\sph^1} \frac{r f(z +  r \omega)}{\sqrt{t^2-r^2}}
\rmd s(\omega) \rmd r \,.
\end{align}
Here
\begin{align*}
	&\forall (z,r) \in \partial B_R(0) \times (0, \infty) \colon
	&&
	\Mo f(z,r)
	\coloneqq\frac{1}{2\pi}\int_{\sph^1}
	f(z +  r \omega) \rmd s(\omega)  \,,\\
	&\forall (z,t) \in \partial B_R(0) \times (0, \infty) \colon
	&&
	\Ao_t g(z,t)
	\coloneqq
	\int_0^t \frac{r g(z,r)}{\sqrt{t^2-r^2}} \, \rmd r \,,
\end{align*}
denote the spherical means transform of a function $f \colon \R^2 \to \R$  with support in  $B_R(0)$,  and the Abel transform of a function
$g \colon \partial B_R(0) \times (0, \infty) \to \R$ in the second variable, respectively.
The solution formula  \eqref{eq:wavesol} is used to numerically compute $\Wo\ph_N^k$ required for evaluating the
right hand side of the Galerkin  equation
as outlined in the following.

\begin{itemize}

\item
For a  symmetric basis function of the form $\varphi(x) = \bar \ph(\norm x)$ the corresponding solution of the wave equation also is radially symmetric. Hence in order to approximate
$\Wo\varphi_N^k$ we  numerically approximate $\Wo \varphi_{1,s_N}^0((r_n,0), t_j)$
for $\Nrad$ equidistant  radii $r_n \in[0,2R]$ and
using a numerical approximation of $\Wo$
by discretizing   the spherical Radon transform as well
as the Abel transform in \eqref{eq:wavesol}.
As a next step, for any basis functions $\varphi_N^k$,
we approximately compute
\begin{equation*}
	\Wo\varphi_N^k(z_i,t_j)
	=
	\Wo\varphi_{1,s_N}^{0}((\|z_i - k\|,0), t_j)
\end{equation*}
at any detector points $z_i \in \partial B_R(0)$
 and discrete time points $t_j$  by replacing the right
hand side with the piecewise linear interpolation in the first argument
using the known values $\Wo\varphi_{1,s_N}^0((r_n,0), t_j)$.

\item In the case of the pixel  basis, the spherical means
$\Mo \chi_N^k$ have been computed analytically and
evaluated  at  the discretization points $(z_i,t_j)$.
Subsequently, the wave data $\Wo\varphi_N^k(z_i,t_j)$
are computed  by numerically evaluating the Abel transform in~\eqref{eq:wavesol}.
 \end{itemize}

\begin{remark}[Implementation\label{rem:FE} for finite element bases] 
Above we have demonstrated  how the Galerkin approach  can be 
implemented efficiently if $\X_N$ is generated by a radially symmetric function.
In fact,  an  efficient implementation of the Galerkin method for PAT can be obtained  for an arbitrary generating element.
In this case the entries of the system  matrix are  computed  similar to~\eqref{eq:sm-discrete}.
Due to the lack of symmetry of the basis functions,  the solution formula \eqref{eq:wavesol}  cannot be used
to accelerate the computation of $\Wo \ph_N^k$; still this would require  the separate  evaluation \eqref{eq:wavesol} for each basis function.
In the non-radially symmetric case, however, one 
can exploit again that $\ph_N^k(x) = \ph((x - skT)/T)$ are translates of a 
single function  $\ph_{T,1}^0$.
For that  purpose, one first numerically computes the  solution  $p(x,t)$  of the wave equation \eqref{eq:wave-fwd}
with initial data $\ph_{T,1}^0$. In a second step one uses interpolation to approximately find $(\Wo \ph_N^k) (z_i,t_j) = p(z_i -  skT, t_j)$.
%The precise computational cost depends on the  numerical solver for wave equation, the discretization  and the corresponding
%interpolation.  For example using Using Fourier techniques $p$ can be  computed in  $\mathcal O(M^3  \log M)$  FLOPS on $M \times  M$
%discretization.

Such an approach can, for example, be used for a bilinear finite elements basis that consists of scaled and translated versions of the basis function $\ph \colon \R^2\rightarrow\R$  defined by
\begin{equation}
\ph(x)  \coloneqq
\begin{cases}
(1-\abs{x_1})(1-\abs{x_2}) &(x_1,x_2)\in [-1,1]^2  \\
0  &\text{else} \,. \end{cases}
\end{equation}
The corresponding finite element basis using the shift parameter $s=1/2$
can easily be seen to satisfy the partition of unity property. Note that in this case the
entries of the system  matrix even can be computed analytically. Exploring the use of finite elements further is  beyond the scope of this paper.  However, we think that a precise comparison of different basis elements
(in combination with our Galerkin approach as well as in combination with related approaches) is an interesting and important line of future research.
\end{remark}

\section{Numerical studies}
\label{sec:num}

In this section we present results of our numerical studies  for
our Galerkin least squares approach,  where the approximation  space
$\X_N$ is taken as the subspace  of  a shift  invariant space. We further compare our results with related approaches in the
literature.   We restrict ourselves to the case of two spatial dimensions and take $R=1$ for the radius of the  measurement circle.

For all presented  numerical results, the function $f$ is taken a superposition of  indicator functions as shown in top left
 image in Figure~\ref{fig:rec1}.
The  corresponding discrete  data
\begin{equation} \label{eq:ddatad}
g(z_i, t_j)  \simeq (\Wo f)(z_i, t_j)
\quad
\text{ for   $i = 1, \dots \Ndet$
and $j = 1, \dots , \Nt$} \,,
\end{equation}
where  $z_i = (\cos (i 2\pi/\Ndet ), \sin (i 2\pi/\Ndet ))$ denote the equidistant detector locations and $t_j = j T/\Nt$ the discrete time points, have been computed numerically by implementing \eqref{eq:wavesol}.  For that purpose
we discretized the spherical Radon transform as well  as the Abel transform in \eqref{eq:wavesol}.
We take $T=3$ as the final measurement time,
$\Ndet = 100$, $\Nt  = 376$ for the discretization of the
data and $\Nx = 300$ for discretizing the function.
Note that the data are computed in a way that is completely
different from the Galerkin system which avoids any
inverse  crime.

\subsection{Reconstruction results  using Kaiser-Bessel Functions}

We first investigate the case where $\ph =  \ph(\edot; m,a,\gamma)$
is a  KB function. The parameters $m$ and $\gamma$
determine the shape and smoothness of the KB function, whereas $a$
determines its support. It is therefore reasonable to fix $m$ and $\gamma$.
Here we choose the fixed parameters  $m=1$ and $\gamma=2$. Further,
$a$ determines the support of the KB function, which  is also controlled by the parameter $T$. Therefore also this parameter can be fixed; without loss of
generality we take $a=2$. This effects that for $s=1$ the functions
$\ph_N^k$ show sufficient overlap.
Since the total number of basis functions which are centered in the square $[-1,1]^2$ is equal to $N^2$ with $N = 2 /(sT)+1$ it is reasonable to consider combinations of the parameters $s$ and  $T$  where the product $sT$ remains constant.

\begin{figure}[thb!]
\centering
\includegraphics[width=0.45\textwidth, height=0.36\textwidth]{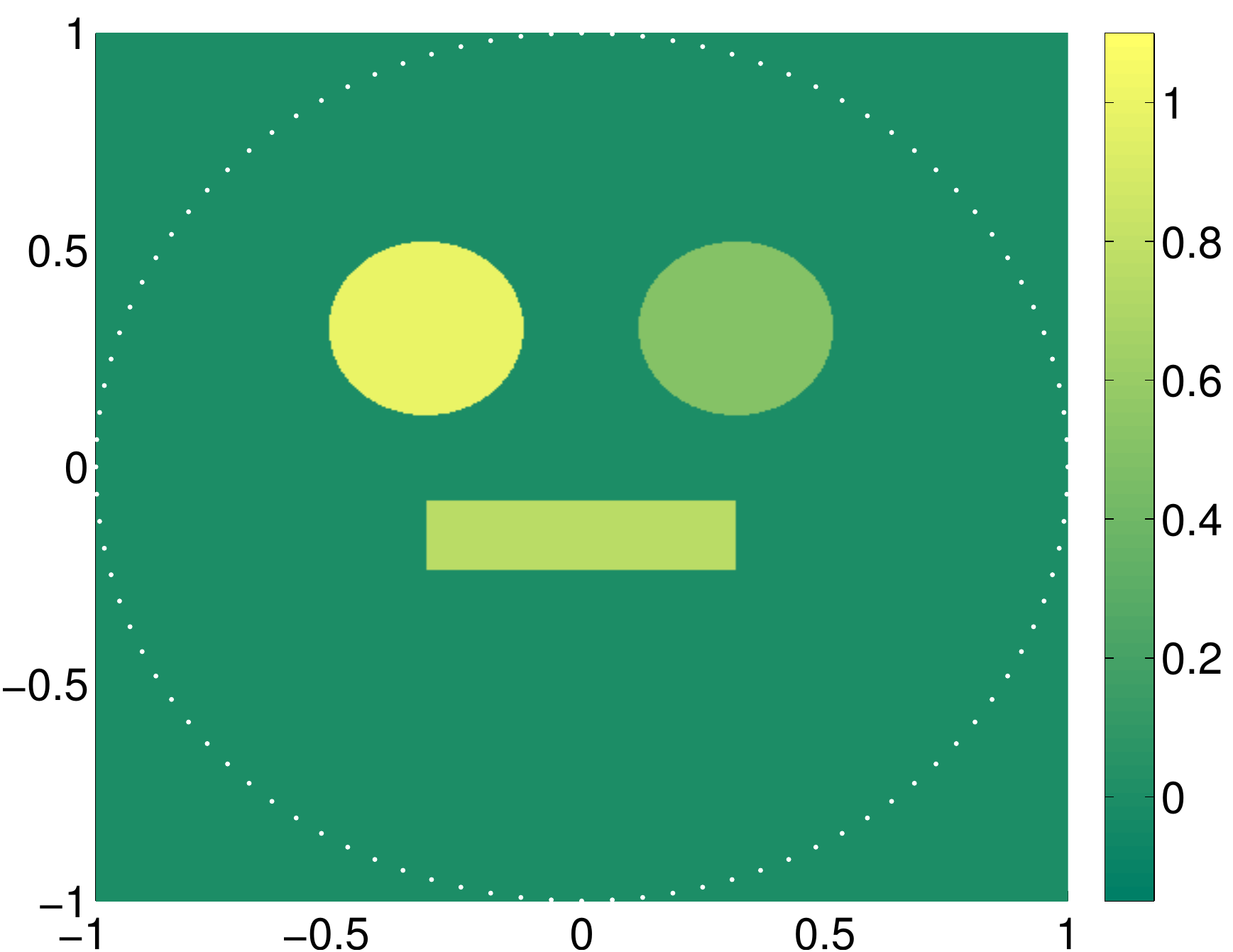}\quad
\includegraphics[width=0.45\textwidth, height=0.36\textwidth]{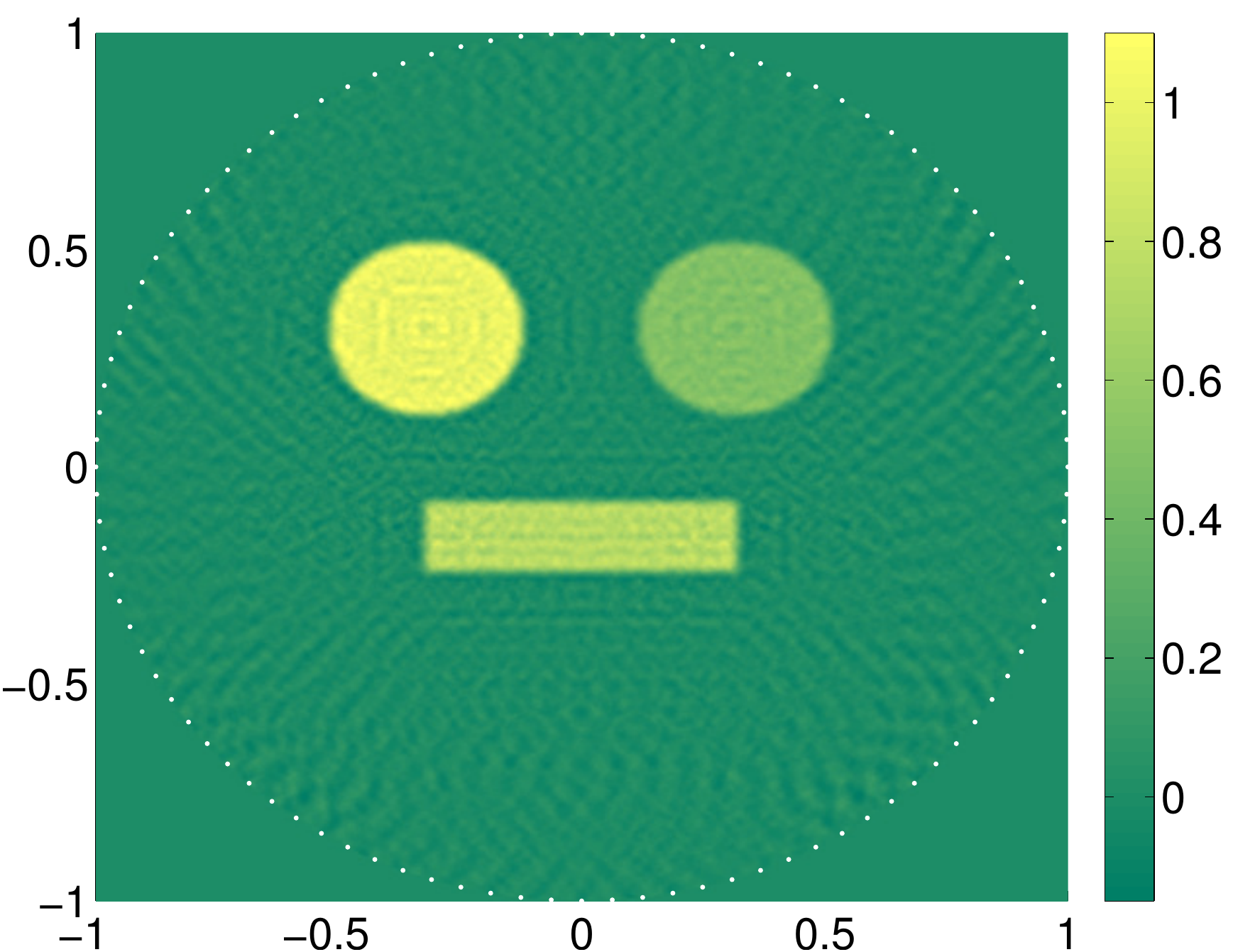}
\\[0.5em]
\includegraphics[width=0.45\textwidth, height=0.36\textwidth]{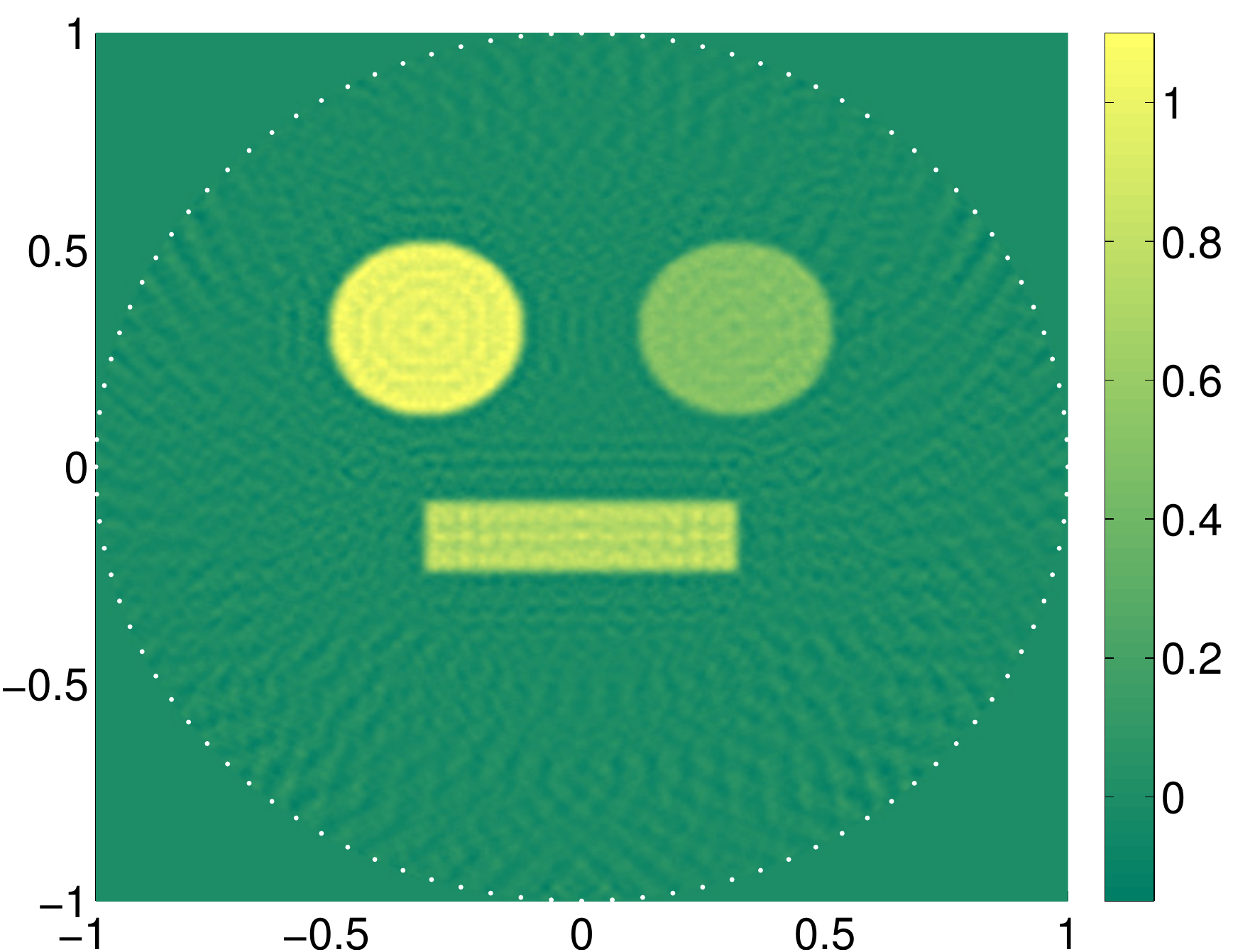}\quad
\includegraphics[width=0.45\textwidth, height=0.36\textwidth]{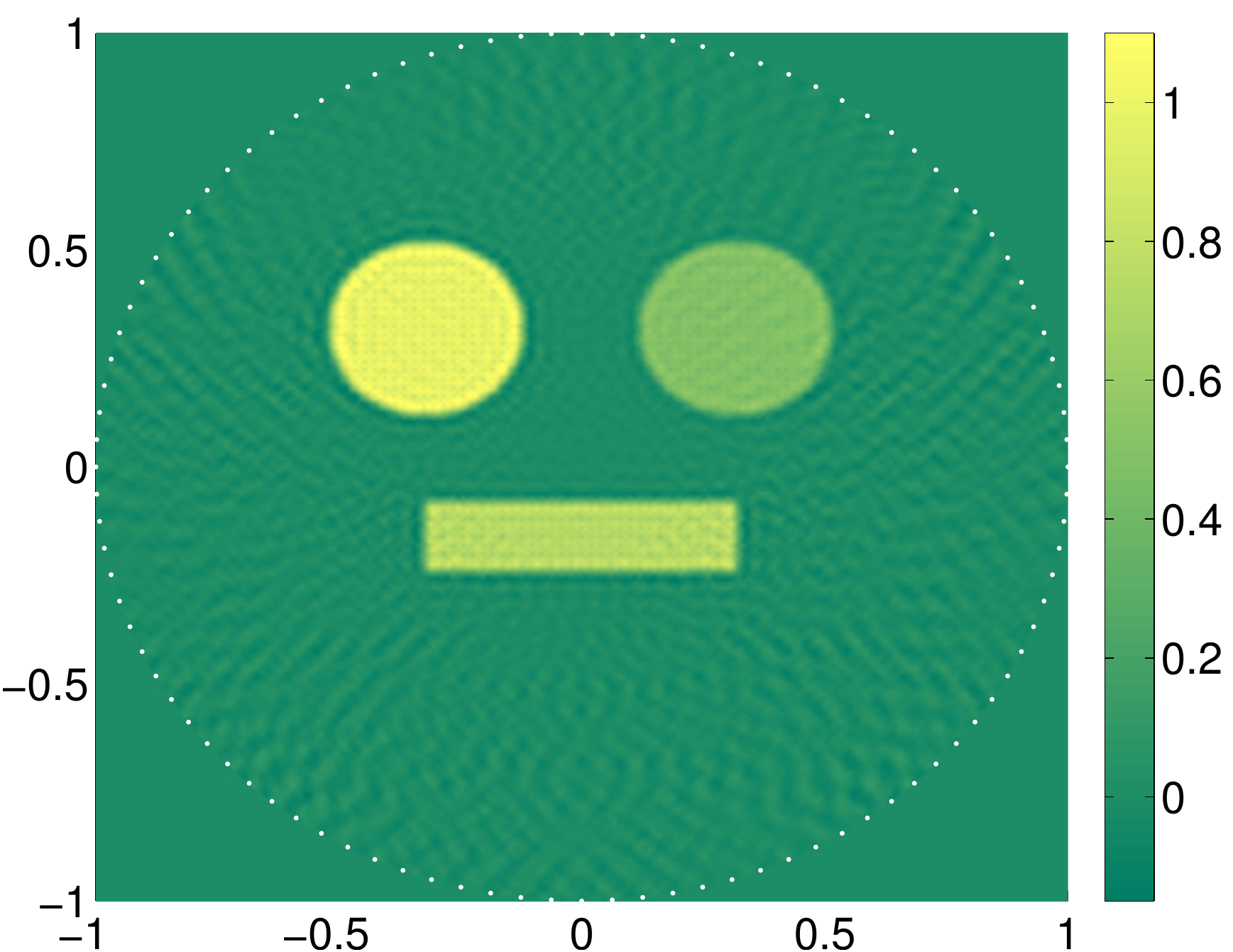}
\caption{\textsc{Reconstruction results\label{fig:rec1}  using the proposed KB Galerkin approach.} Top left:   Phantom $f$.
Top right: Reconstruction with $s=0.8081$, $T=0.025$.
Bottom left: Reconstruction with $s=1.0101$, $T=0.02$.
Bottom right: Reconstruction with $s=1.3636$ and $T=0.0148$.}
\end{figure}

The proposed KB Galerkin approach for the inverse PAT problem consists in  solving the Galerkin equation~\eqref{eq:galerkin2}. Therefore the  system matrix and the right hand side are computed in \MATLAB
as described in Section~\ref{sec:galerkinshift} and the direct solver {\tt mldivide} is used for numerically computing the solution of  \eqref{eq:galerkin2}.
Figure~\ref{fig:rec1} shows reconstructions using the  KB Galerkin reconstruction
for $N=100$  and step size parameters $s=0.8081$,
$s = 1.0101$ and $s=1.3636$, respectively. One  notices
that actually all considered step size parameters  yield
quite good results.

% tabelle hat fasches s/T
%\begin{table}[thb!]%L^2
%\centering
%\begin{tabular}{c | c c | c c }
%\toprule
%\multicolumn{1}{c}{ } & \multicolumn{2}{c}{$N=50$} & \multicolumn{2}{c}{$N=100$} \\
%\midrule
%$s$& $e_N(s,f)$&$T = T(s,N)$& $e_N(s,f)$ &$T = T(s,N)$ \\
%\midrule
%1.4 &0.0412&0.0286&0.0336 &0.0143\\
%\midrule
%1.35 &0.0379&0.0296& 0.0299 &0.0148\\
%\midrule
%1.3 &0.0351&0.0308&0.0298 &0.0154 \\
%\midrule
%1.25 &0.0352&0.0320&0.0303 &0.0160 \\
%\midrule
%1.2 &0.0369&0.0333&0.0307 &0.0167 \\
%\midrule
%1.15 &0.0401&0.0348&0.0320 &0.0174 \\
%\midrule
%1.1 &0.0439 &0.0364 &0.0345 &0.0182 \\
%\midrule
%1.05 &0.0427 &0.0381 &0.0367&0.0190 \\
%\midrule
%1 & 0.0428 &0.0400   &0.0384&0.0200 \\
%\midrule
%0.95& 0.0403 &0.0421 &0.0335&0.0211 \\
%\midrule
%0.9& 0.0391  &0.0444 &0.0392&0.0222 \\
%\midrule
%0.85& 0.0412 &0.0471 &0.0366&0.0235 \\
%\midrule
%0.8& 0.0432  &0.0500 &0.0322&0.0250 \\
%\midrule
%0.75& 0.0464 &0.0533 &0.0365&0.0267 \\
%\bottomrule
%\end{tabular}
%\caption{\textsc{\label{tab:error} Relative $L^2$-reconstruction errors} For different choices of $s$ the reconstruction error  $e_N(s,f)$ with is evaluated for $N=50$ and $N=100$. Recall that   $T(s,N) = 2/(s(N-1))$ determines the  size of the KB basis functions.}
%\end{table}

\begin{table}[thb!]%L^2
\centering
\begin{tabular}{c c c | c c c }
\toprule
 \multicolumn{3}{c}{$N=50$} & \multicolumn{3}{c}{$N=100$} \\
\midrule
$s$ & $T$ & $e_N(s,f)$ &  $s$ & $T$ & $e_N(s,f)$  \\
\midrule
1.4286    &0.0286& 0.0412  &1.4141 &0.0143& 0.0336 \\
\midrule
1.3776  &0.0296& 0.0379  &1.3636 &0.0148& 0.0299 \\
\midrule
1.3265 &0.0308& 0.0351   &1.3131 &0.0154& 0.0298  \\
\midrule
1.2755  &0.0320& 0.0352   &1.2626 &0.0160&  0.0303  \\
\midrule
1.2245   &0.0333&  0.0369   &1.2121 & 0.0167&  0.0307  \\
\midrule
1.1735 &0.0348&  0.0401    &1.1616  &0.0174&  0.0320    \\
\midrule
1.1224  &0.0364&  0.0439   &1.1111  &0.0182& 0.0345  \\
\midrule
1.0714  &0.0381& 0.0427    &1.0606 & 0.0190&  0.0367   \\
\midrule
1.0204  &0.0400&  0.0428     &1.0101& 0.0200&  0.0384   \\
\midrule
0.9694 &0.0421& 0.0403    &0.9596  & 0.0211& 0.0335   \\
\midrule
0.9184 &0.0444& 0.0391    &0.9091&0.0222&  0.0392  \\
\midrule
0.8673  &0.0471& 0.0412  &0.8586 &0.0235&  0.0366    \\
\midrule
0.8163   &0.0500& 0.0432  &0.8081&0.0250& 0.0322   \\
\midrule
0.7653    &0.0533& 0.0464    &0.7576 & 0.0267&  0.0365    \\
\bottomrule
\end{tabular}
\caption{\textsc{\label{tab:error} Relative $L^2$-reconstruction errors} For different choices of $s$ the reconstruction error  $e_N(s,f)$ with is evaluated for $N=50$ and $N=100$. Recall that $s$ is the step size and $T = 2/(s(N-1))$ determines the  size of the KB basis functions.}
\end{table}

\subsection{Parameter selection for the KB  functions}

Choosing  optimal parameters seems a difficult issue.
In the following we numerically investigate the optimal choice of the  parameters $s$ and $T$ for a fixed number of basis functions $N^2$ with $N=50$ and $N=100$, respectively.  For that purpose we compute the $L^2$-reconstruction  error
 \begin{equation} \label{eq:err-nsf}
	e_N(s,f)  \coloneqq
	\frac{\sum_{i=1}^N\sum_{j=1}^N
	|f_N(x_i,y_j)-f(x_i,y_j)|^2}{\sum_{i=1}^N\sum_{j=1}^N |f(x_i,y_j)|^2}.
\end{equation}
for different choices of  $s$ and $T$ satisfying  the side condition
$sT = 2/(N-1)$.
Here $f_N \coloneqq\sum_{k\in\Lambda_N} c_{N,k}\varphi_N^k$  is the Galerkin reconstruction
given by \eqref{eq:galerkin2} and the evaluation points $(x_i,y_j)$  for evaluating the error in
\eqref{eq:err-nsf}
are taken as  the elements on $\{s_N T_N k \  \mid \ k\in\Z^2\}\cap[-1,1]^2$.
In Table~\ref{tab:error} we show these relative $L^2$ reconstruction errors.
From  Table~\ref{tab:error} one finds that for the considered function optimal  choices for the step size parameter  are
$s=1.3265$ for $N=50$ and  $s = 1.3131$ for $N=100$.

\begin{figure}[thb!]
\centering
\includegraphics[width=0.45\textwidth]{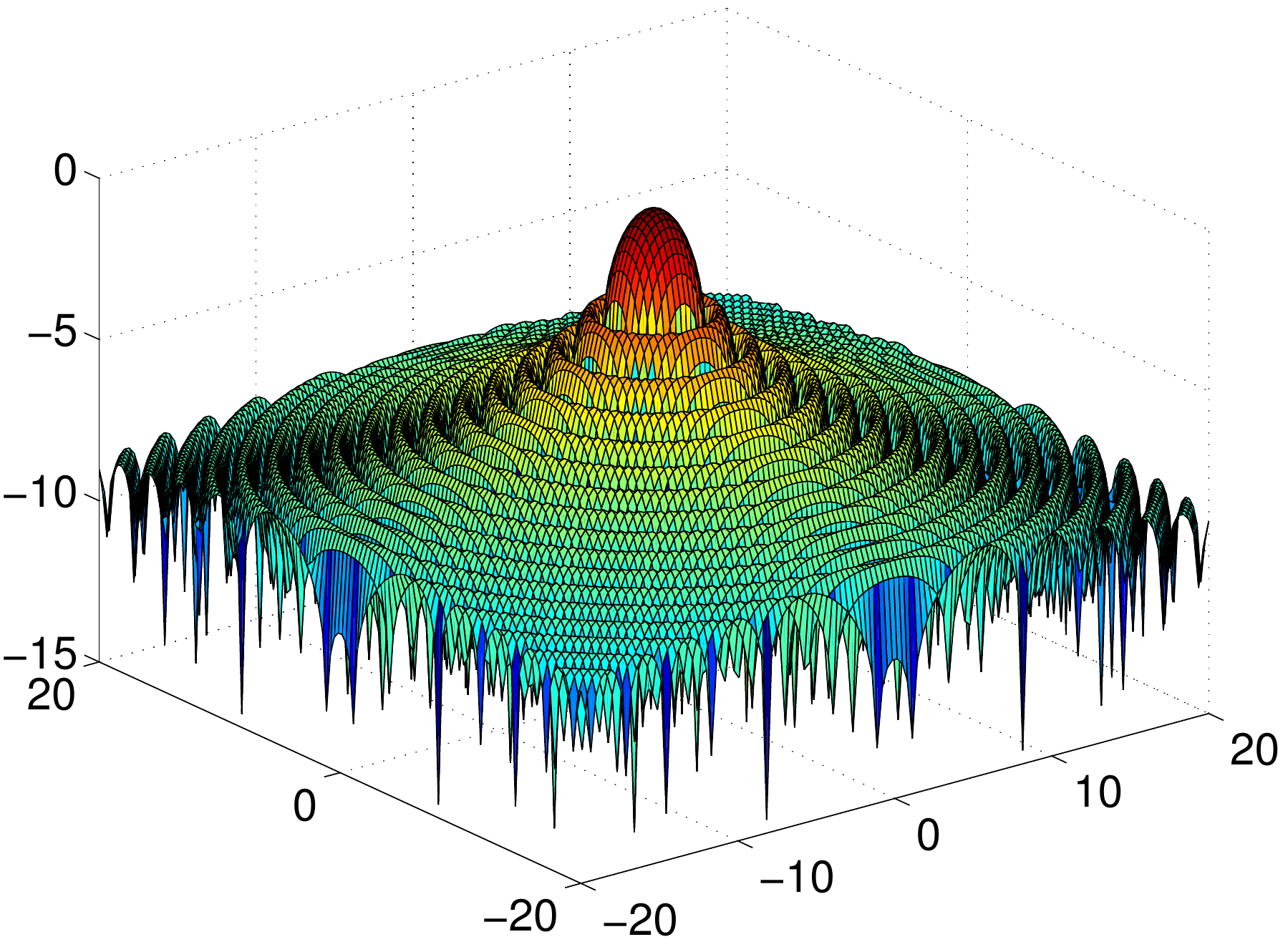} \quad
\includegraphics[width=0.45\textwidth]{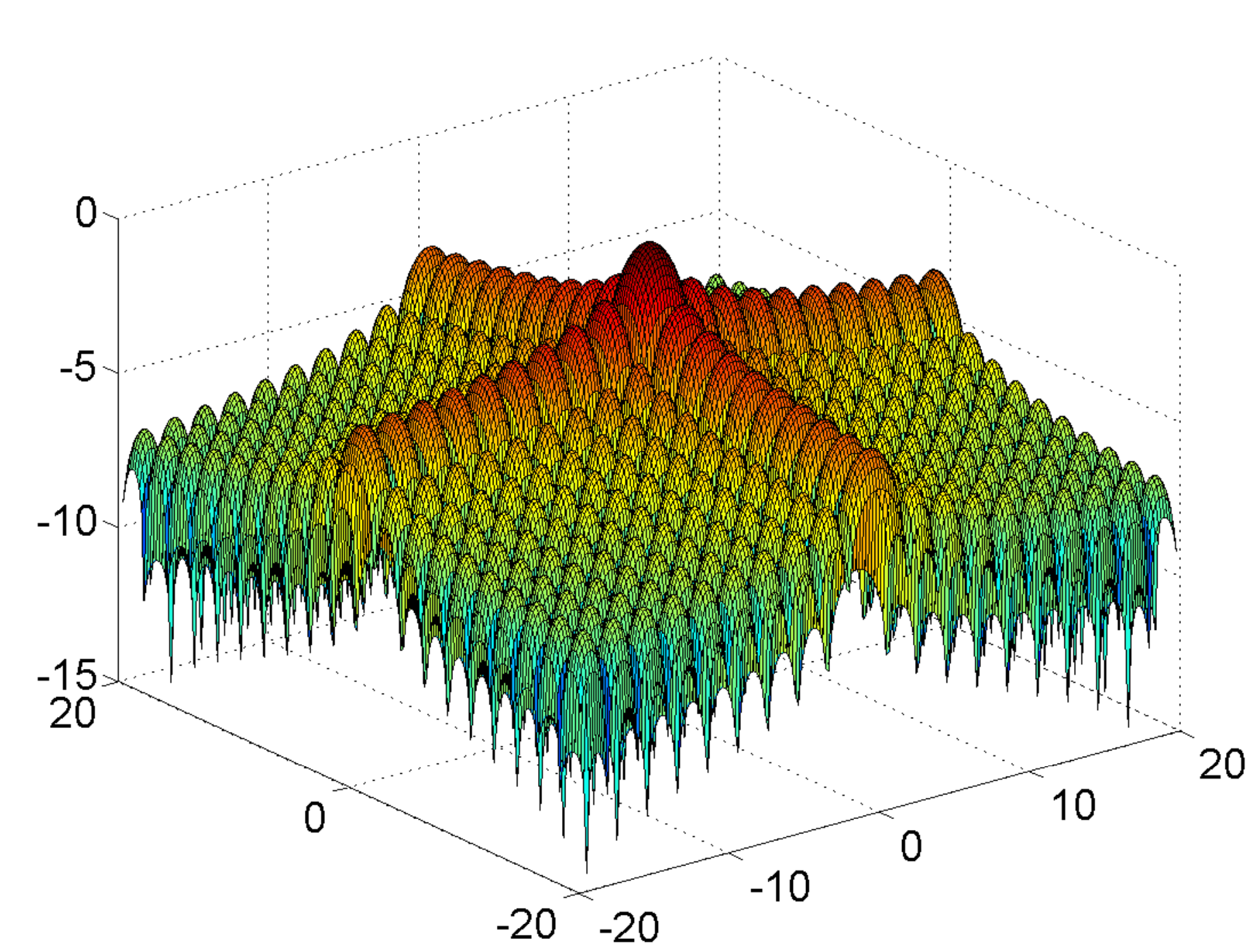}
\caption{\textsc{Logarithmic\label{fig:psi} plots of the Fourier transforms of the basis functions.}
Left: Fourier transform $\log( \abs{\hat{\varphi}(\edot  ; 1,2,2)})$ of the
KB basis function. Right: Fourier transform  $\log(\abs{\sinc(\edot)})$ of the pixel basis function
$\chi$.}
\end{figure}

From Table~\ref{tab:error} one  notices an irregular behavior of the
reconstruction error in dependance on $s$  and $N$.
To better understand this  issue recall  that  for a given basis function
$\varphi = \varphi(\edot ; m,a,\gamma)$  the
best the $L^2$-approximation error
 using functions $\ph_{T,s}^k$ is given by (see Theorem~\ref{thm:aerror})
\begin{equation*}
\norm{\Po_{T,s}f-f}_{L^2}
=
\int_{\bigl[-\frac{\pi}{Ts},\frac{\pi}{Ts}\bigr]^2}\abs{\hat{f}(\xi)}^2
\bkl{ 1-  \frac{\abs{\hat{\varphi}(T\xi)}^2}{\sum_{k\in\Z^\dd}\abs{\hat{\varphi}(T\xi+2k\pi/s)}^2}}   \,,
\end{equation*}
where it is  assumed  that the Fourier transform of $f$ is sufficiently small outside
$[-\pi/(Ts),\pi/(Ts)]^2$. Hence  for fixed  $N$  a ``good'' choice of $s$
 should be made at least in such a way that
\begin{equation*}
S_{\ph}(s,N ,\xi )
\coloneqq
 \frac{\sum_{k \neq 0} \abs{\hat{\varphi} \bigl( \frac{2}{s (N-1)} \xi - \frac{2k\pi}{s} \bigr) }^2}
 {\abs{\hat{\varphi}\bigl(\frac{2}{s (N-1)} \xi  \bigr)}^{2}}
\text{is ``small'' for  } \norm{\xi} \leq \frac{\pi (N-1)}{2} \,.
\end{equation*}
(We have taken $T  = 2/(s (N-1))$ and $\hat f$ is supposed to be unknown.)
Figure~\ref{fig:psi} shows that absolute value of the radially symmetric Fourier
transform of the basis function $\varphi(\edot ; 1,2,2)$ in a logarithmic plot.
This shows a complicated dependence of $S_{\ph}(s, N ,\xi )$ on $s$, $N$  and $\xi$ and
indicates that a simple universally valid answer how to optimally chose parameters seems difficult.
We further note that $S_{\ph}(s,N ,\xi )$ does not contain error due to frequency content
outside  $[-\pi/(Ts),\pi/(Ts)]^2$.  Nevertheless, theoretical error estimates
in combination  with numerical studies can give precise guidelines for selecting good parameter
for the practical applications.
The quality of the reconstruction depends on the parameters of the KB function $m,\gamma, a$ as well as on $s$ and $T$ (note that $T$ has a similar role as $a$).
In the paper \cite{nilchian2015optimized} the authors studied optimizing the parameter
$\gamma$ (in the limit $T \to 0$) while the parameters $s=1$, $a=2$ and $m=2$ have been kept fixed.  For that purpose they choose the parameter $\gamma$ in
$\varphi(\cdot \ ; 2,2,\gamma)$ such that the limiting residual error $S_{\ph}(1, 0 ,\xi ) = \sum_{k \neq 0} |\hat{\varphi}(2k\pi)|^2$ (that is independent of $\xi$) becomes minimal. As we argued  above the drawback of such an approach is that taking $s$ fixed does not yield vanishing asymptotic error as
$N \to \infty$.  Allowing $s$ to depend on $N$ overcomes this issue but makes the parameter selection more complicated.

\begin{figure}[thb!]
\centering
\includegraphics[width=0.45\textwidth, height=0.36\textwidth]{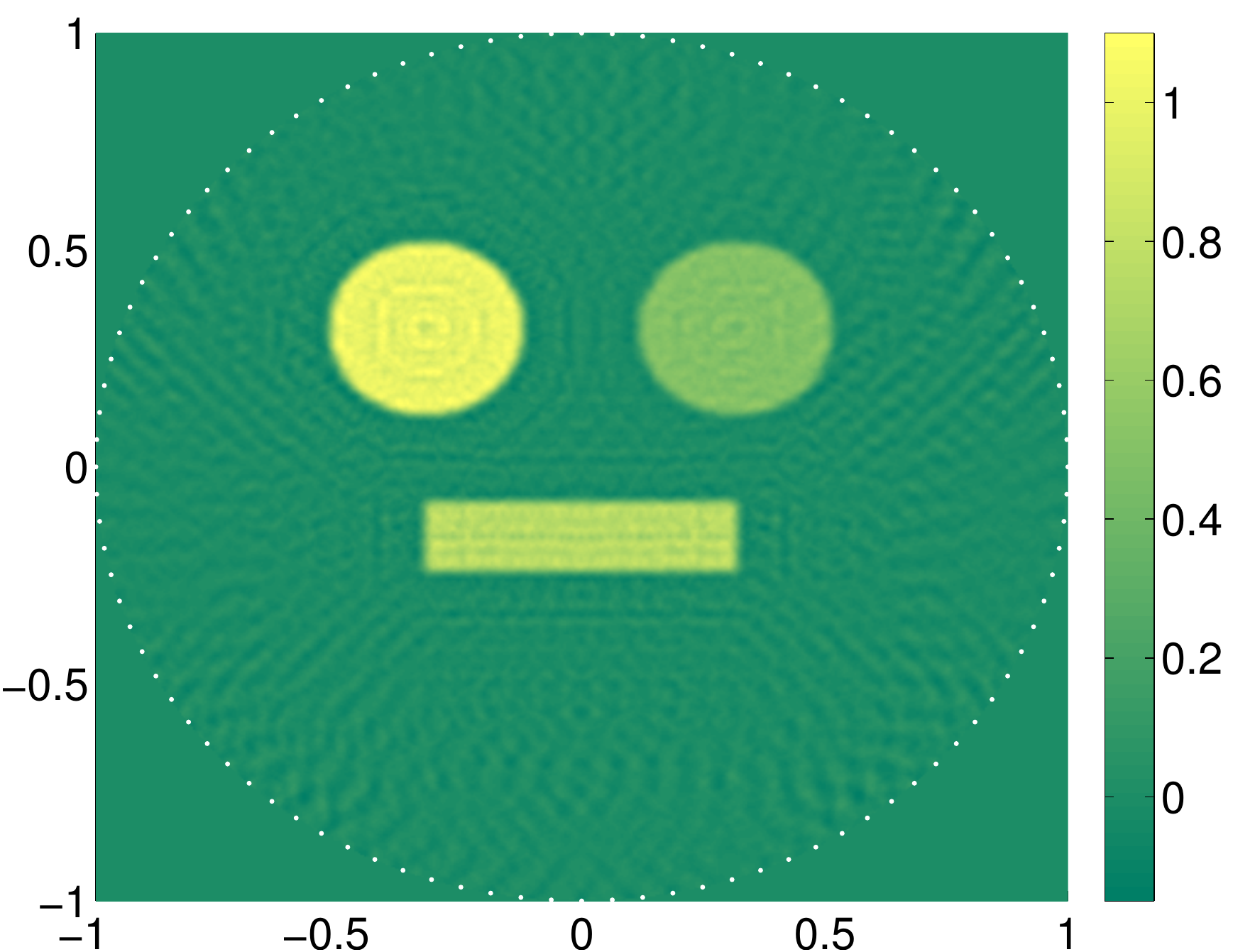}\quad
\includegraphics[width=0.45\textwidth, height=0.36\textwidth]{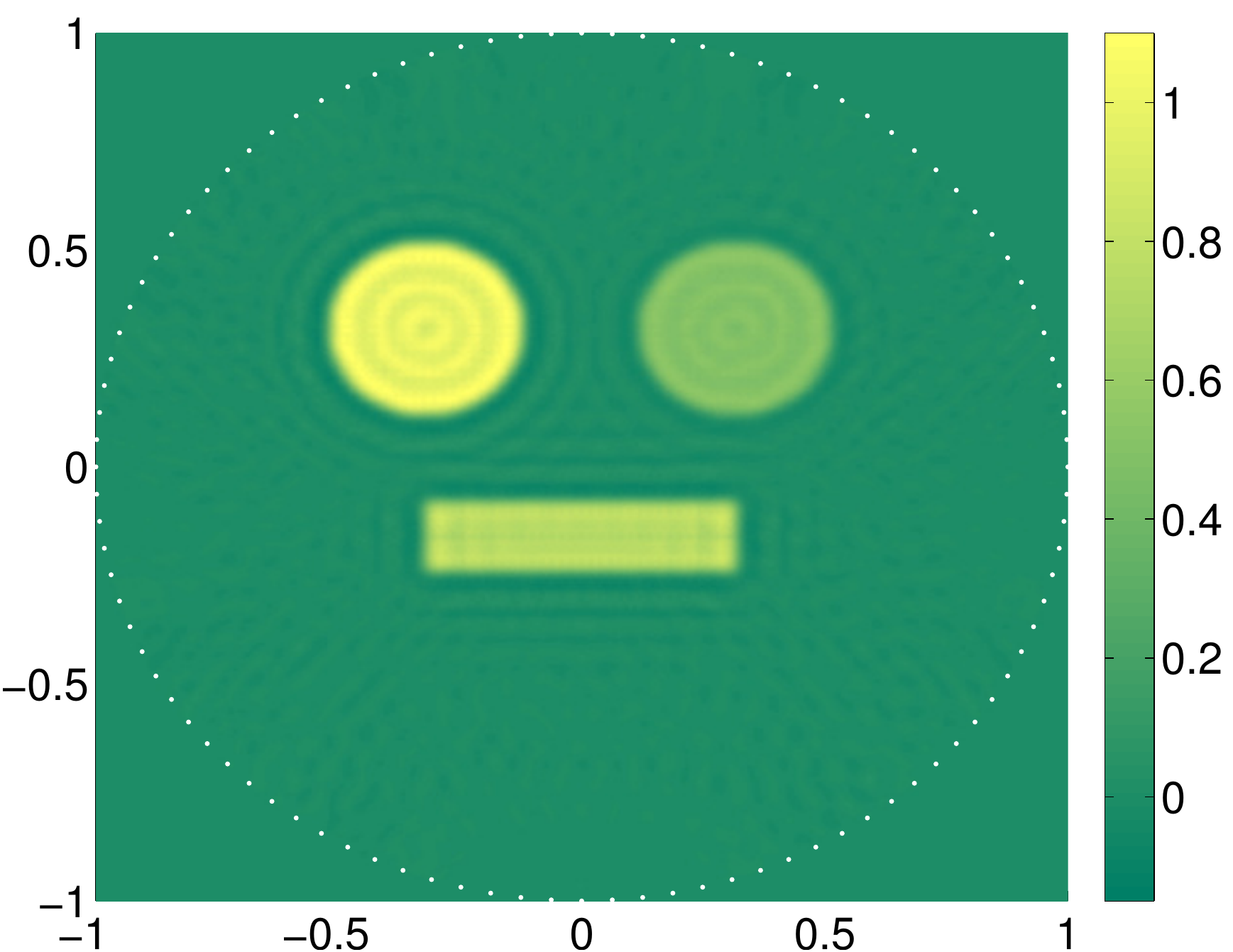}\\[0.5em]
\includegraphics[width=0.45\textwidth, height=0.36\textwidth]{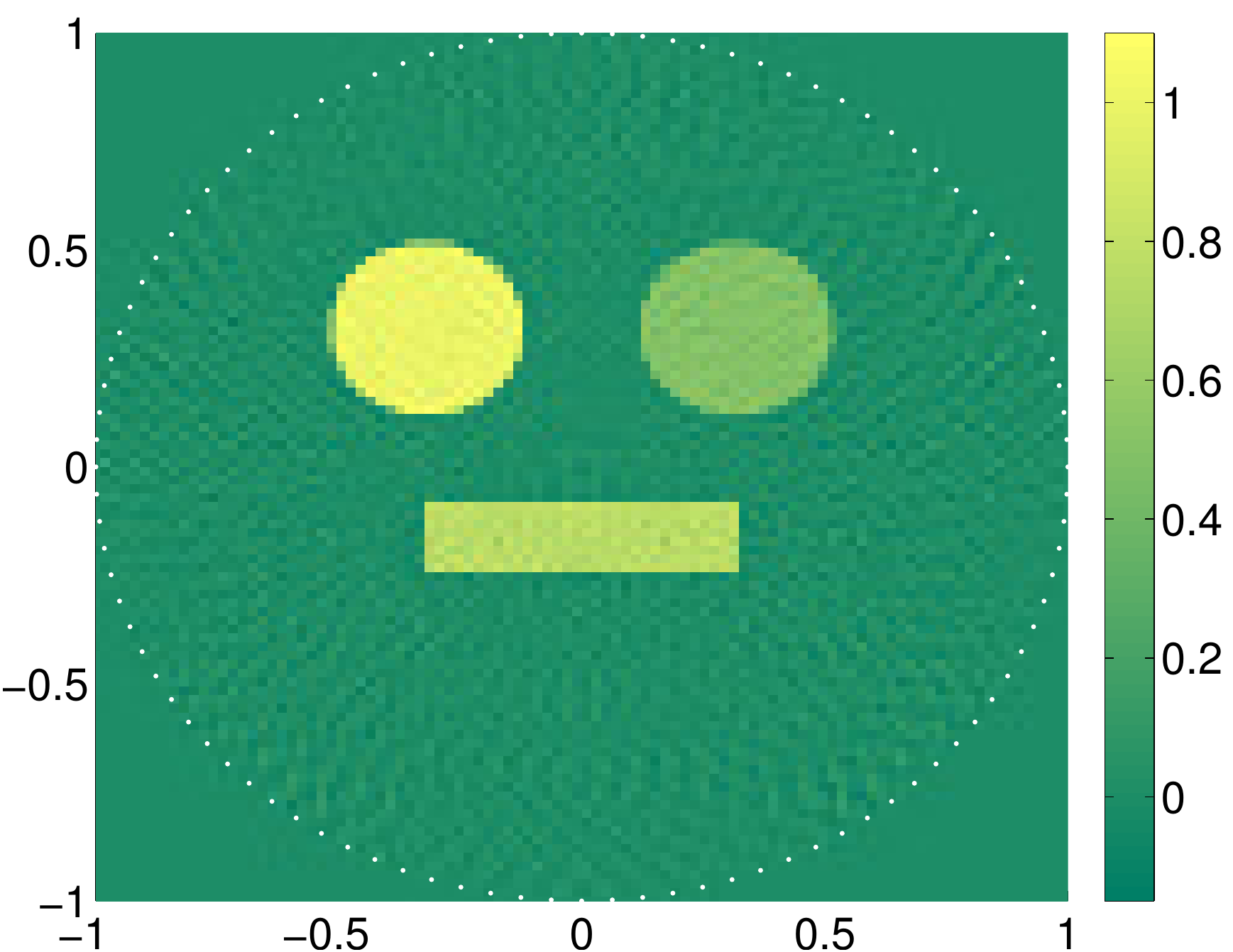}\quad
\includegraphics[width=0.45\textwidth, height=0.36\textwidth]{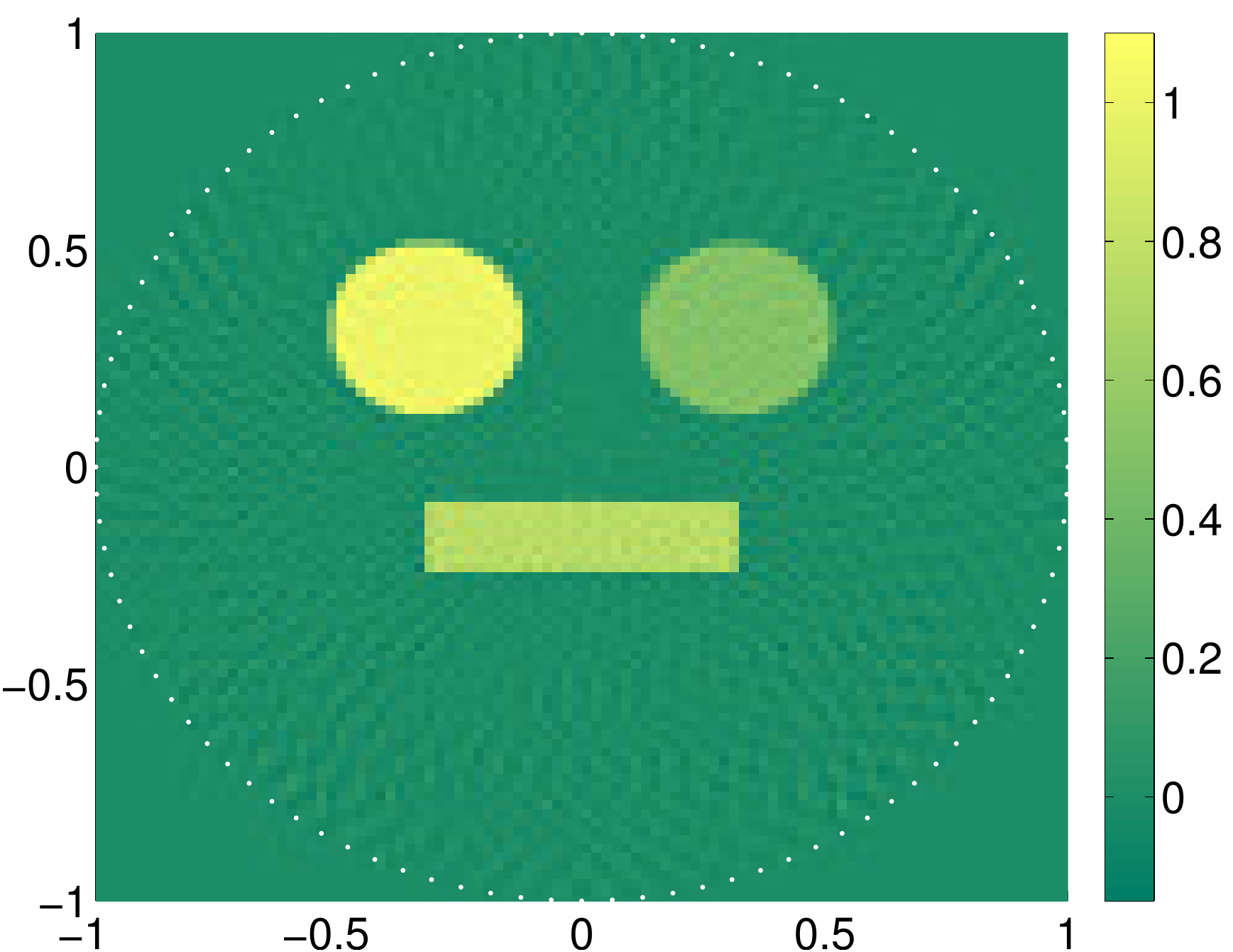}
\caption{\textsc{Comparison\label{fig:comparison} of reconstruction methods.}
Top left:   KB Galerkin approach using 40 CG iterations.
Top right: Fully discrete KB reconstruction using  40 CG iterations.
Bottom left: FBP algorithm.
Bottom right: Galerkin reconstruction using pixel basis.}
\end{figure}

\subsection{Comparison with state of the art reconstruction methods}

We compare  our  Galerkin approach using KB functions with other state of the
art approaches for PAT image reconstruction.  We used the same phantom as above
and the same wave data $\Wo f$ for all reconstruction methods. We selected $100 \times 100$ basis functions. For the KB Galerkin approach we use the generating function $\ph(\edot; 1,2,2)$
with step size parameter $s = 0.8081$ and correspondingly
$T = 0.025$.

The KB Galerkin-least squares approach is compared to the following methods:

\begin{itemize}

\item {\bfseries Discrete-discrete KB imaging model \cite{wang2014discrete}.}
We compare our method  also to the DD (discrete-discrete)
image reconstruction approach  using KB functions proposed in \cite{wang2014discrete}. There the same basis functions  for approximating the unknown function are used,
$f_N  = \sum_{k\in \La_N} c_{N,k} \ph_N^k$.
Opposed to our Galerkin approach, for  recovering  the coefficients
in  the basis expansion one forces
$\Wo f_N$ to exactly interpolate the discrete data values
$g (x_i,t_j)$. This  is equivalently
characterized  as the minimizer of
following discrete data least squares functional over $\X_N$,
\begin{equation}\label{eq:leastfully}
    \frac{1}{2} \norm{ \Bo_N  c_N - g_N}^2
	\to \min_{c_N}
\end{equation}
where  $\Bo_N := (\Wo\varphi_N^k(x_i,t_j))_{i,k}$
and $g_N \coloneqq (g (x_i,t_j))_{i,j}$.
Note that in~\cite{wang2014discrete} it has been proposed
to add an additional regularization term to~\eqref{eq:leastfully},
which we do not consider here.

\item {\bfseries Filtered backprojection (FBP) algorithm.}
For the  filtered  backprojection algorithm we implemented the explicit inversion formula
\begin{multline} \label{eq:inv}
	f(x) =\frac{2}{R}   \left(  \Wo^*t\Wo f \right)(x)
	\\=
	-\frac{1}{\pi}\int_{\partial D_1}\int_{|x-p|}^{\infty}\frac{\partial_t  (t\Wo f(p,t))}{\sqrt{t^2-|x-p|^2}}\rmd t \rmd s(p) \quad \text{ for all } x \in B_R(0) \,.
\end{multline}
The inversion formula has been derived in \cite{FinPatRak04}  for odd spatial dimension  and in  \cite{FinHalRak07} for even dimension. The inversion formula \eqref{eq:inv} can be
efficiently implemented in the form of a  filtered backprojection algorithm requiring $\mathcal{O} (N^3)$ floating operations, where $N \times N$ is the number of reconstruction points, see~\cite{BurBauGruHalPal07,FinHalRak07}.
For a fair comparison, the number of reconstruction points in the filtered backprojection algorithm  is taken equal to the
number of  basis functions in the KB Galerkin approach.

\item {\bfseries Galerkin reconstruction using the pixel basis.}

Here reconstruction space is generated by
$100 \times 100$ basis  functions given by piecewise constant functions on a square of length $2/100$ (see Section~\ref{sec:ex:pixel}). Since the pixel basis forms an orthonormal system it holds $\Ao_N = \Io_N$. The right hand side of the matrix equation is  computed as described in  Section~\ref{sec:galerkinshift}.
\end{itemize}

\begin{figure}[thb!]
\centering
\includegraphics[width=0.45\textwidth, height=0.36\textwidth]{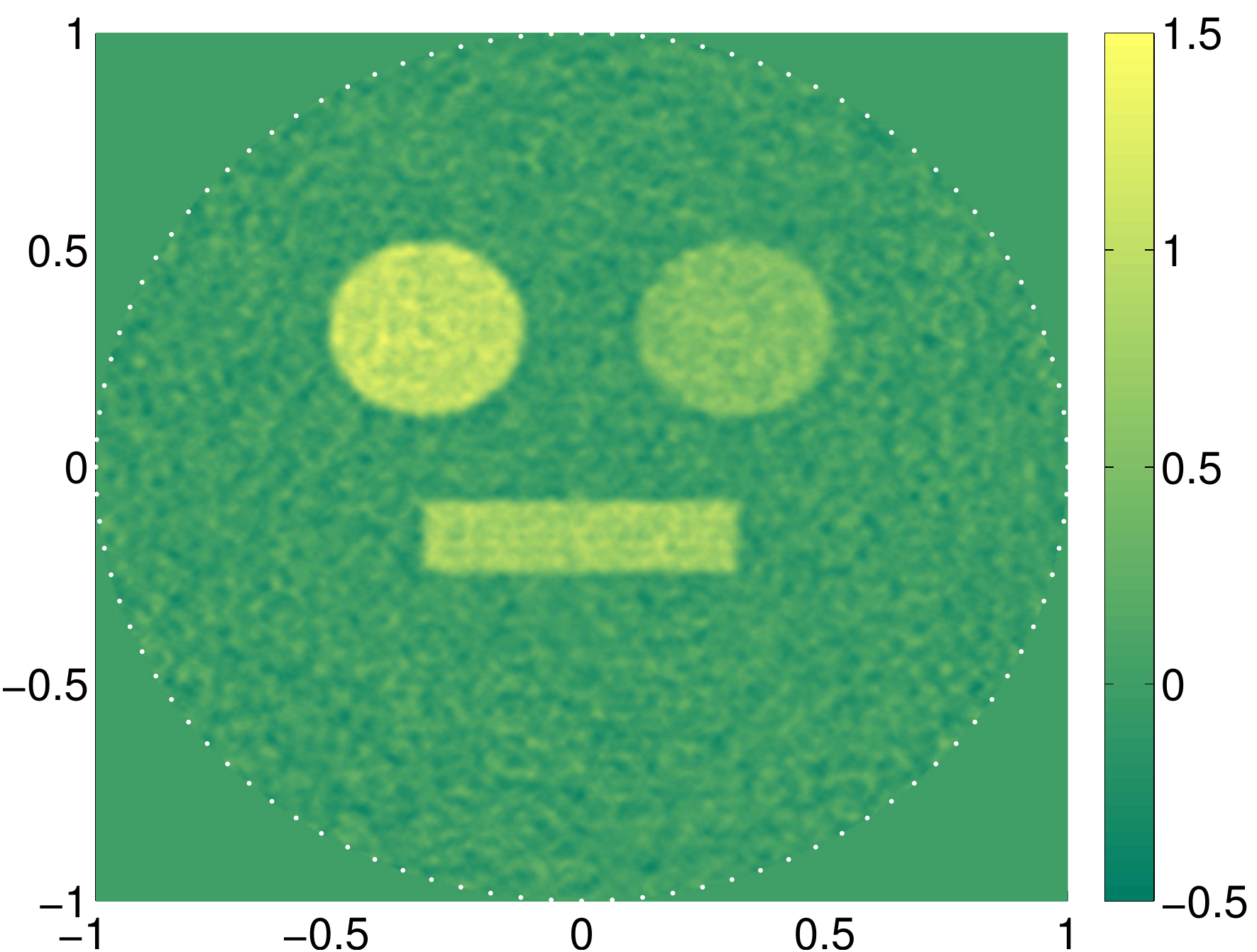}\quad
\includegraphics[width=0.45\textwidth, height=0.36\textwidth]{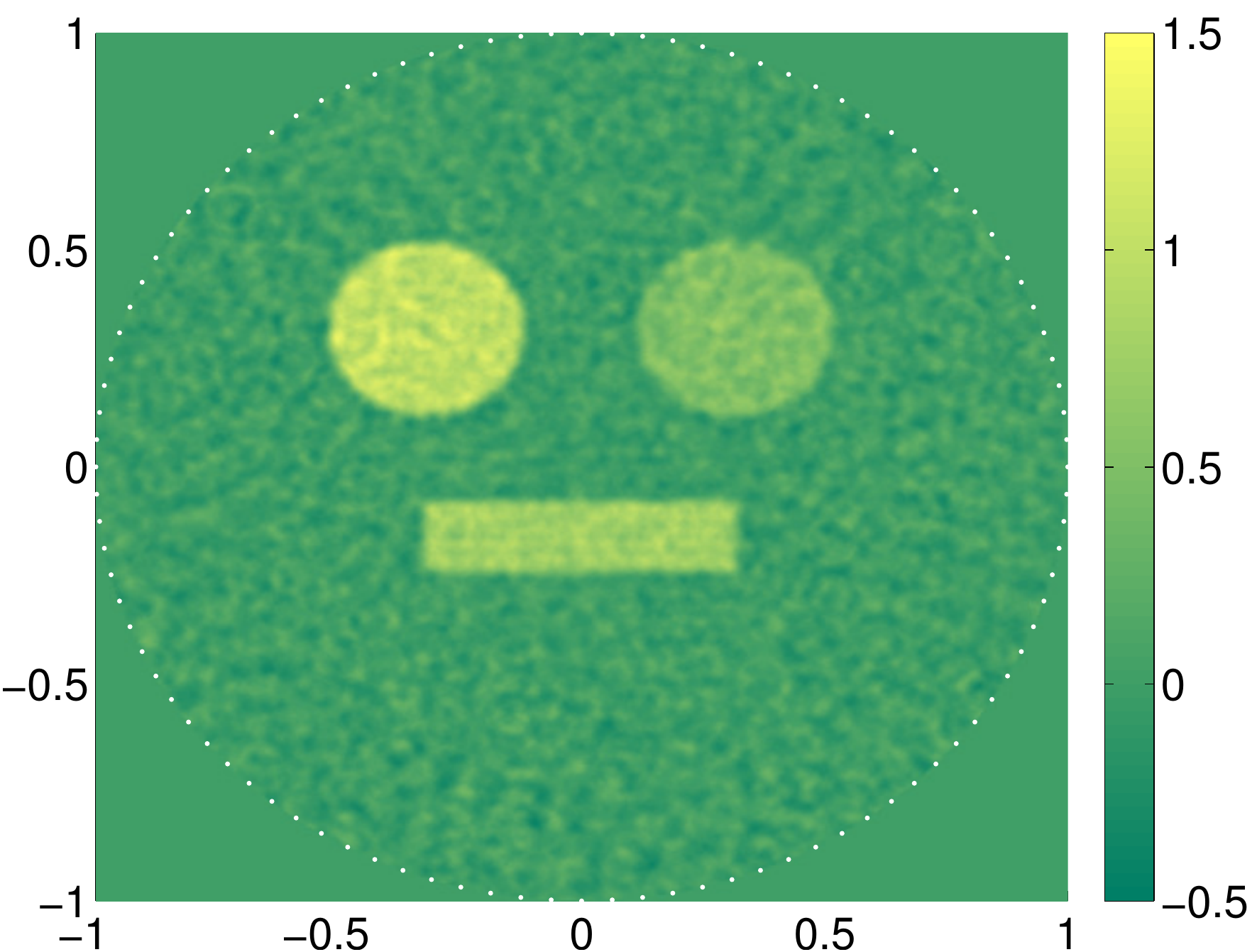}\\[0.5em]
\includegraphics[width=0.45\textwidth, height=0.36\textwidth]{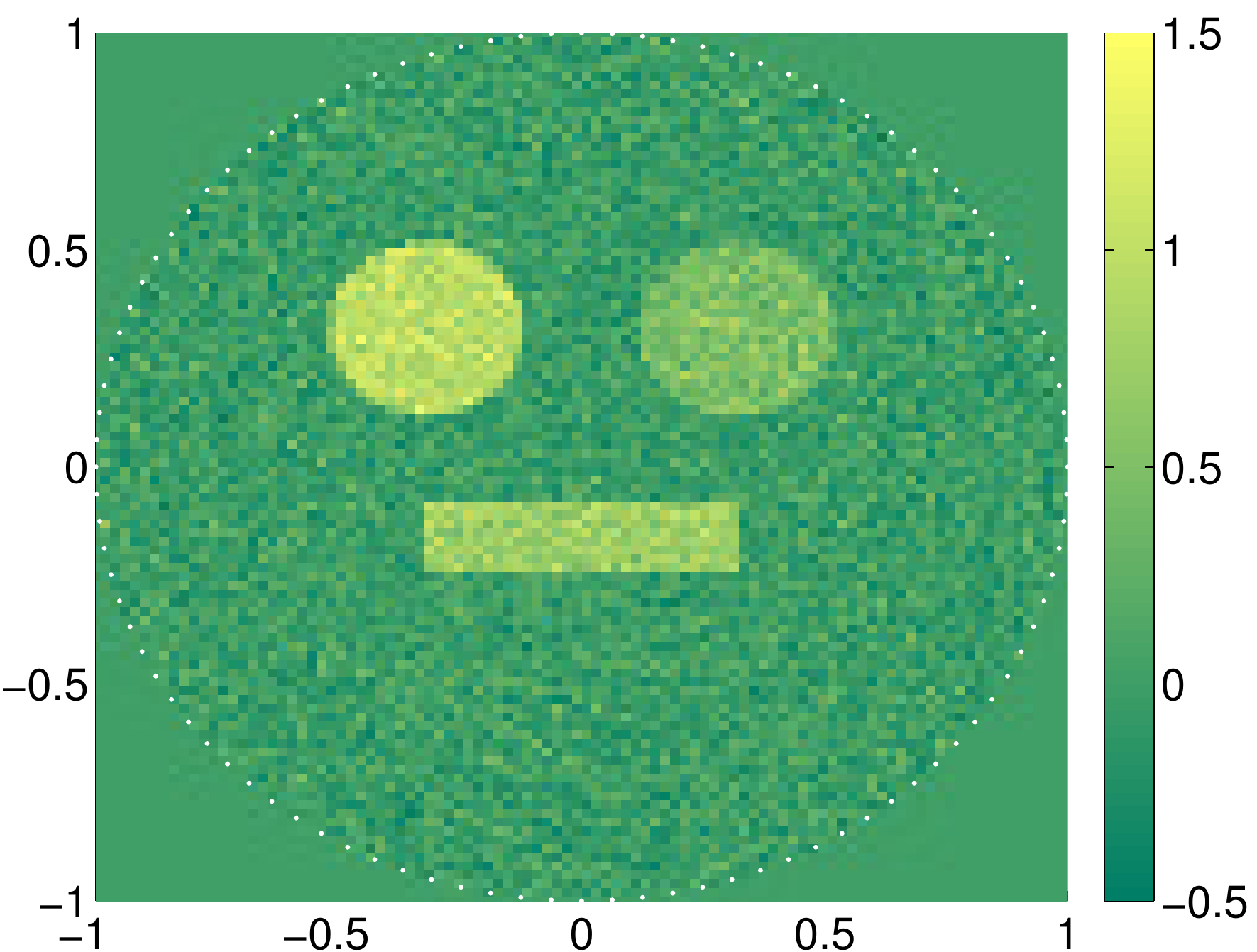}\quad
\includegraphics[width=0.45\textwidth, height=0.36\textwidth]{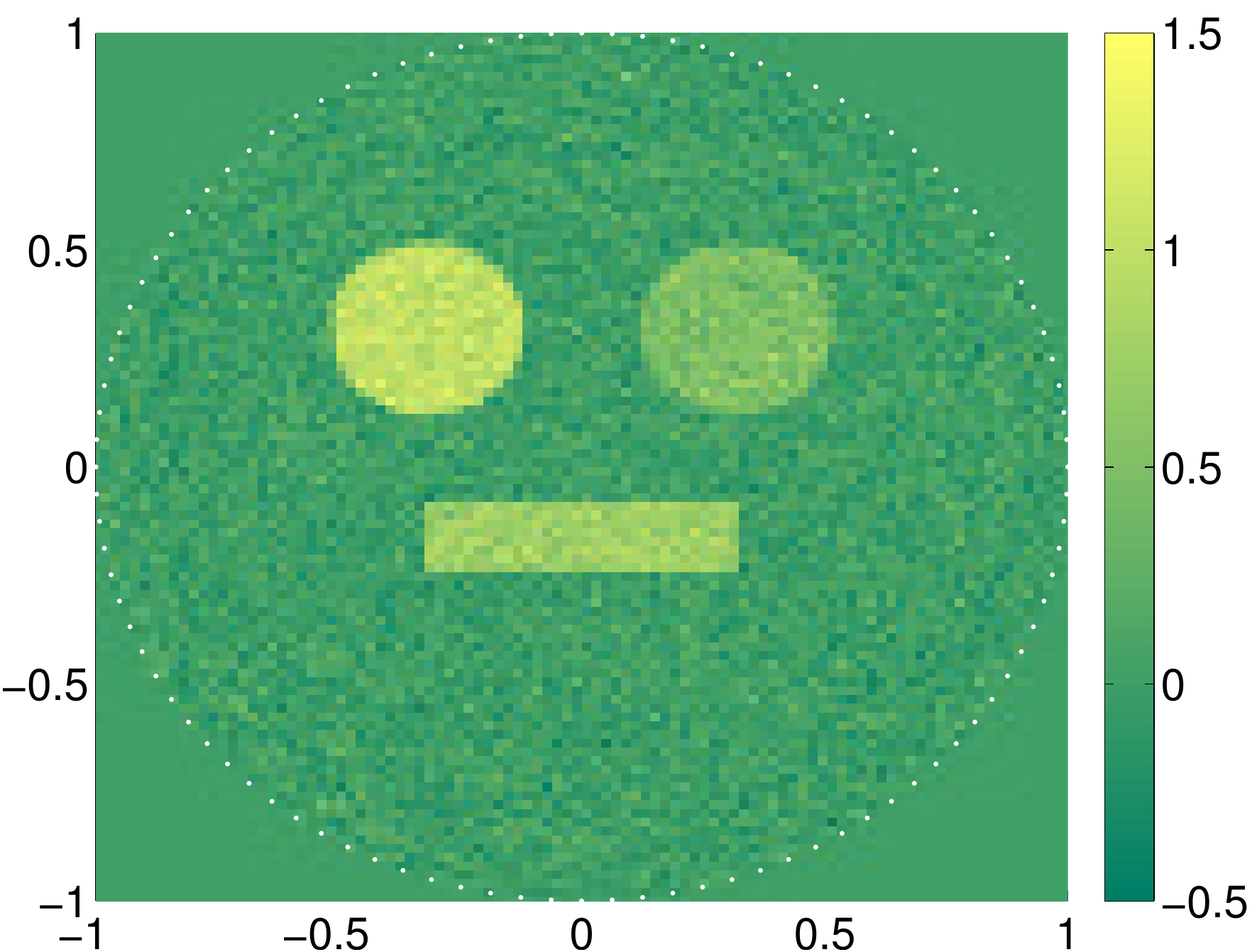}
\caption{\textsc{Comparison\label{fig:comparisonnoise} of reconstruction methods for data with  5\% noise}.
Top left:   KB Galerkin approach using 40 CG iterations.
Top right: Fully discrete KB reconstruction using  40 CG iterations.
Bottom left: FBP algorithm.
Bottom right: Galerkin reconstruction using pixel basis.}
\end{figure}

The minimizer of the  optimization problem \eqref{eq:leastfully}  is given as the solution of the normal equation
 $\Bo_N^\trans \Bo_N c =\Bo_N^\trans g_N$.
The matrix $\Bo_N^\trans \Bo_N$ is less structured and less sparse
 than our Galerkin matrix  $\Ao_N$. We observed that the direct solver in
 \MATLAB was much slower than for the Galerkin method (more than a minute compared to a fraction of a second) and therefore we decided to use iterative methods for  its solution. In particular we found the CG algorithm to perform good, which has been used
 for the results shown below. For better comparison we also  computed the KB Galerkin solution using  the CG method.
Iteratively addressing the arising equations has the advantage
that  they  are applicable  for three dimension image  reconstruction as well.

%
%Galerkin KB innere produkte       1.9517
%Galerkin KB Lösung \              0.3103
%Galerkin KB Lösung cg 40          0.1103
%model based Normalengleichung     76.1723
%model based Lösung cg 40          5.0150
%Formel                            0.0751
%Galerkin pix innere produkte      2.0417

In Figure \ref{fig:comparison} we show reconstruction results with the above methods applied to the simulated data obtained on a standard desktop PC.    Computing the right   hand side in the Galerkin equation   took about $1.95$ seconds for the KB functions and 2.04 for the pixel basis. The solution of
the KB Galerkin equation took $0.31$ seconds with the  direct \MATLAB solver and   $0.11$ seconds using $40$ steps of the CG equation.  The solution of the  discrete equation took  about $76.17$ seconds with the  direct \MATLAB solver and   $5.01$ seconds using $40$ steps of the CG equation.   The used filtered
 backprojection algorithm took about $0.08$ seconds. One observes that
 computing the right hand side is currently the most time consuming part in
 the Galerkin approach.  Since we have to compute $N^2$ inner products and each inner product  consist of a sum over $\Ndet \Nt$ components, the numerical effort   of that step is  $\mathcal O(N^4)$
 if we take $\Ndet =  \mathcal O(N)$ and   $\Nt  =  \mathcal O(N)$.   By exploiting the  special
 structure of the  basis  functions and the wave operator we believe that
 it might be possible to derive  $\mathcal O(N^3)$ algorithm
 for evaluating the  right hand side. In such a situation   we would reach the computational  performance of the FPB algorithm with more flexibility and a potentially better accuracy.
 Further note  that  the matrix $\Bo_N^\trans \Bo_N$ in the DD approach is not sparse
 which explains why the CG method for the Galerkin approach is faster than the CG method for the DD approach.  In three spatial dimension, both the DD approach  (see \cite{wang2014discrete})
 and the Galerkin approach  yields to a sparse system matrix and therefore both have
 similar and good numerical efficiency in this case.

In order  to investigate the stability of the
above algorithms with respect to  noise we repeated the above computation after Gaussian white noise with variance equal  to  $5\%$ of the $L^2$-norm of the data. The results are shown in Figure~\ref{fig:comparisonnoise}.  Table~\ref{tab:comparison} summarizes the $L^2$-reconstruction
for different noise level and different reconstruction errors.    We see that the methods using
the KB functions  perform best in terms of the $L^2$-reconstruction error.  Note that the early
stopping of the CG methods has a regularization effect. This partially explains the  smaller reconstruction
error  of the method using the CG iteration. We emphasize  that we did not  select the number of iterations to minimize the reconstruction error. The KB Galerkin using the  direct solver in \MATLAB also gives
quite small error, which indicates that early stopping is not a very important issue in terms of
the stability. Note that for noisy data all results can be improved by incorporating
regularization (see, for example,  \cite{Hal11b} for the FBP algorithm and  \cite{wang2014discrete}
for the DD approach). 

\begin{table}[thb!]%L^2
\centering
\begin{tabular}{c |c c  c c c }
\toprule
 noise  ($\%$)      & Galerkin &  Galerkin (CG) &   DD approach  (CG) & FBP   & Pixel  \\
\midrule
$0$                      & 0.0323  & 0.0306                &   0.0314   &  0.0347           &  0.0283 \\
\midrule
$2.5$                   & 0.0830  & 0.0748                &   0.0783   &  0.2064            &  0.1249 \\
\midrule
$5$                      & 0.1411  &  0.1272               &   0.1140   &  0.3897            &  0.2092  \\
\bottomrule
\end{tabular}
\caption{\textsc{Relative\label{tab:comparison} $L^2$-reconstruction errors for $N = 100$, $s=0.8081$
using different reconstruction methods and different noise levels.} The Galerkin, the Galerkin (CG) and
DD-approach (CG) we use the the KB basis function $\ph(\edot; 1,2,2)$. For the methods using the CG algorithm
40 iterative steps  have been performed.}
\end{table}

\section{Conclusion and outlook}
\label{sec:conclusion}

In this paper we studied  (least-squares) Galerkin methods
for photoacoustic tomography with spherical geometry (and arbitrary dimension). We implemented our  Galerkin approach  for two spatial dimensional and presented  numerical results demonstrating
that yields accurate results. The considered approach yields  to solution  of  the Galerkin equation $ \Ao_N c_N =  b_N$,
where the system matrix $\Ao_N$ has size $N^2 \times N^2$ with $N^2$ denoting the number of basis elements.
For a general reconstruction space, the  system matrix to be
computed and stored is dense and unstructured. In this paper we showed that by using the isometry property of~\cite{FinHalRak07,FinPatRak04} in combination with  translation invariant reconstruction spaces, the system matrix is
sparse and has simple  structure. This can be used to easily set up the
Galerkin equation and  efficiently solve the Galerkin equation.
This is in contrast to  existing  model based approaches for two-dimensional PAT,
that do not  yield to a sparse  system matrix and numerical solvers for the arising equation (such as the CG algorithm) are numerically more expensive.

There are several possible interesting extensions and modifications
of our image reconstruction approach.  One intended line of research is the extension of  our algorithm to three spatial  dimension.
For that purpose we believe  that it is most promising to use iterative methods (such as the  CG algorithm) for solving the Galerkin equation. One advantage in this case is that the system matrix is not required
to be explicitly stored. For that purpose we will further derive more efficient
ways how to evaluate the right hand side in the Galerkin equation which is, at least for the presented algorithm in  two spatial dimensions,  the most time consuming part.
Another  practically  important extension of our framework is to incorporate
finite detector size, finite bandwidth of the  detection system and allowing
incomplete data.   In such  cases it will be necessary to include additional
regularization  to stabilize the reconstruction process.
We intend to apply our algorithm to experimental data and to study the
optimal parameter choices in such a situation. Finally it would be
interesting to extend our approach to more general measurement surfaces.

\subsection*{Acknowledgement} The authors thank the reviewers for careful reading and helpful comments on the manuscript.


\begin{thebibliography}{10}

\bibitem{AgrKuc07}
{\sc M.~Agranovsky and P.~Kuchment}, {\em Uniqueness of reconstruction and an
  inversion procedure for thermoacoustic and photoacoustic tomography with
  variable sound speed}, Inverse Probl., 23 (2007), pp.~2089--2102.

\bibitem{ansorg2013summability}
{\sc M.~Ansorg, F.~Filbir, W.~R. Madych, and R.~Seyfried}, {\em Summability
  kernels for circular and spherical mean data}, Inverse Probl., 29 (2013),
  p.~015002.

\bibitem{arridge2016adjoint}
{\sc S.~R. Arridge, M.~M. Betcke, B.~T. Cox, F.~Lucka, and B.~E. Treeby}, {\em
  On the adjoint operator in photoacoustic tomography}, Inverse Probl., 32
  (2016), p.~115012 (19pp).

\bibitem{beard2011biomedical}
{\sc P.~Beard}, {\em Biomedical photoacoustic imaging}, Interface focus, 1
  (2011), pp.~602--631.

\bibitem{belhachmi2016direct}
{\sc Z.~Belhachmi, T.~Glatz, and O.~Scherzer}, {\em A direct method for
  photoacoustic tomography with inhomogeneous sound speed}, Inverse Probl., 32
  (2016), p.~045005.

\bibitem{blu99approximation}
{\sc T.~Blu and M.~Unser}, {\em Approximation error for quasi-interpolators and
  (multi-)wavelet expansions}, Appl. Comput. Harmon. Anal., 6 (1999),
  pp.~219--251.

\bibitem{BurBauGruHalPal07}
{\sc P.~Burgholzer, J.~Bauer-Marschallinger, H.~Gr{\"u}n, M.~Haltmeier, and
  G.~Paltauf}, {\em Temporal back-projection algorithms for photoacoustic
  tomography with integrating line detectors}, Inverse Probl., 23 (2007),
  pp.~S65--S80.

\bibitem{burgholzer2007exact}
{\sc P.~Burgholzer, G.~J. Matt, M.~Haltmeier, and G.~Paltauf}, {\em Exact and
  approximate imaging methods for photoacoustic tomography using an arbitrary
  detection surface}, Phys. Rev. E, 75 (2007), p.~046706.

\bibitem{DeaBueNtzRaz12}
{\sc X.~L. Dean-Ben, A.~Buehler, V.~Ntziachristos, and D.~Razansky}, {\em
  Accurate model-based reconstruction algorithm for three-dimensional
  optoacoustic tomography}, IEEE Trans. Med. Imag., 31 (2012), pp.~1922--1928.

\bibitem{diebold1991photoacoustic}
{\sc G.~J. Diebold, T.~Sun, and M.~I. Khan}, {\em Photoacoustic monopole
  radiation in one, two, and three dimensions}, Phys. Rev. Lett., 67 (1991),
  p.~3384.

\bibitem{FinHalRak07}
{\sc D.~Finch, M.~Haltmeier, and Rakesh}, {\em Inversion of spherical means and
  the wave equation in even dimensions}, SIAM J. Appl. Math., 68 (2007),
  pp.~392--412.

\bibitem{FinPatRak04}
{\sc D.~Finch, S.~K. Patch, and Rakesh}, {\em Determining a function from its
  mean values over a family of spheres}, SIAM J. Math. Anal., 35 (2004),
  pp.~1213--1240.

\bibitem{gordon1970algebraic}
{\sc R.~Gordon, R.~Bender, and G.~T. Herman}, {\em Algebraic reconstruction
  techniques {(ART)} for three-dimensional electron microscopy and x-ray
  photography}, Journal Theor. Biol., 29 (1970), pp.~471--481.

\bibitem{Hal11b}
{\sc M.~Haltmeier}, {\em A mollification approach for inverting the spherical
  mean {R}adon transform}, SIAM J. Appl. Math., 71 (2011), pp.~1637--1652.

\bibitem{haltmeier13inversion}
{\sc M.~Haltmeier}, {\em Inversion of circular means and the wave equation on
  convex planar domains}, Comput. Math. Appl., 65 (2013), pp.~1025--1036.

\bibitem{haltmeier14universal}
{\sc M.~Haltmeier}, {\em Universal inversion formulas for recovering a function
  from spherical means}, SIAM J. Math. Anal., 46 (2014), pp.~214--232.

\bibitem{haltmeier2016iterative}
{\sc M.~Haltmeier and L.~V. Nguyen}, {\em Iterative methods for photoacoustic
  tomography with variable sound speed}.
\newblock arXiv:1611.07563, 2016.

\bibitem{HalPer15a}
{\sc M.~Haltmeier and S.~{Pereverzyev Jr.}}, {\em Recovering a function from
  circular means or wave data on the boundary of parabolic domains}, SIAM J.
  Imaging Sci., 8 (2015), pp.~592--610.

\bibitem{HalPer15b}
{\sc M.~Haltmeier and S.~{Pereverzyev Jr.}}, {\em The universal back-projection
  formula for spherical means and the wave equation on certain quadric
  hypersurfaces}, J. Math. Anal. Appl., 429 (2015), pp.~366--382.

\bibitem{HalSchBurNusPal07}
{\sc M.~Haltmeier, O.~Scherzer, P.~Burgholzer, R.~Nuster, and G.~Paltauf}, {\em
  Thermo\-acoustic tomography and the circular {R}adon transform: exact
  inversion formula}, Math. Mod. Meth. Appl. Sci., 17 (2007), pp.~635--655.

\bibitem{HalSchuSch05}
{\sc M.~Haltmeier, T.~Schuster, and O.~Scherzer}, {\em Filtered backprojection
  for thermoacoustic computed tomography in spherical geometry}, Math. Meth.
  Appl. Sci., 28 (2005), pp.~1919--1937.

\bibitem{haltmeier2010spatial}
{\sc M.~Haltmeier and G.~Zangerl}, {\em Spatial resolution in photoacoustic
  tomography: Effects of detector size and detector bandwidth}, Inverse Probl.,
  26 (2010), p.~125002.

\bibitem{herman2015basis}
{\sc G.~T. Herman}, {\em Basis functions in image reconstruction from
  projections: A tutorial introduction}, Sens. and Imaging, 16 (2015),
  pp.~1--21.

\bibitem{Hristova2008}
{\sc Y.~Hristova, P.~Kuchment, and L.~Nguyen}, {\em Reconstruction and time
  reversal in thermoacoustic tomography in acoustically homogeneous and
  inhomogeneous media}, Inverse Probl., 24 (2008), p.~055006 (25pp).

\bibitem{kak2001principles}
{\sc A.~C. Kak and M.~Slaney}, {\em Principles of Computerized Tomographic
  Imaging}, vol.~33 of Classics in Applied Mathematics, Society for Industrial
  and Applied Mathematics (SIAM), Philadelphia, PA, 2001.

\bibitem{Kre99}
{\sc R.~Kress}, {\em Linear Integral Equations}, Springer Verlag, Berlin, 1999.
\newblock second edition.

\bibitem{KruKisReiKruMil03}
{\sc R.~A. Kruger, W.~L. Kiser, D.~R. Reinecke, G.~A. Kruger, and K.~D.
  Miller}, {\em Thermoacoustic molecular imaging of small animals}, Mol.
  Imaging, 2 (2003), pp.~113--123.

\bibitem{Kuc14}
{\sc P.~Kuchment}, {\em The {R}adon transform and medical imaging}, SIAM,
  Philadelphia, 2014.

\bibitem{kuchment2011mathematics}
{\sc P.~Kuchment and L.~Kunyansky}, {\em Mathematics of photoacoustic and
  thermoacoustic tomography}, in Handbook of Mathematical Methods in Imaging,
  Springer, 2011, pp.~817--865.

\bibitem{Kun07a}
{\sc L.~A. Kunyansky}, {\em Explicit inversion formulae for the spherical mean
  {R}adon transform}, Inverse Probl., 23 (2007), pp.~373--383.

\bibitem{Kun07b}
{\sc L.~A. Kunyansky}, {\em A series solution and a fast algorithm for the
  inversion of the spherical mean {R}adon transform}, Inverse Probl., 23
  (2007), pp.~S11--S20.

\bibitem{kunyansky2015inversion}
{\sc L.~A. Kunyansky}, {\em Inversion of the spherical means transform in
  corner-like domains by reduction to the classical {R}adon transform}, Inverse
  Probl., 31 (2015).

\bibitem{lewitt1990multidimensional}
{\sc R.~M. Lewitt}, {\em Multidimensional digital image representations using
  generalized kaiser--bessel window functions}, J. Opt. Soc. Am. A, 7 (1990),
  pp.~1834--1846.

\bibitem{lewitt1992alternatives}
{\sc R.~M. Lewitt}, {\em Alternatives to voxels for image representation in
  iterative reconstruction algorithms}, Phys. Med. Biol., 37 (1992), p.~705.

\bibitem{Lou96}
{\sc A.~K. Louis}, {\em Approximate inverse for linear and some nonlinear
  problems}, Inverse Probl., 12 (1996), pp.~175--190.

\bibitem{LouMaa90}
{\sc A.~K. Louis and P.~Maass}, {\em A mollifier method for linear operator
  equations of the first kind}, Inverse Probl., 6 (1990), pp.~427--440.

\bibitem{LouSchu96}
{\sc A.~K. Louis and T.~Schuster}, {\em A novel filter design technique in 2{D}
  computerized tomography}, Inverse Probl., 12 (1996), pp.~685--696.

\bibitem{Mal09}
{\sc S.~Mallat}, {\em A wavelet tour of signal processing: The sparse way},
  Elsevier/Academic Press, Amsterdam, third~ed., 2009.

\bibitem{matej1996practical}
{\sc S.~Matej and R.~M. Lewitt}, {\em Practical considerations for 3-d image
  reconstruction using spherically symmetric volume elements}, IEEE Trans. Med.
  Imag., 15 (1996), pp.~68--78.

\bibitem{Nat01}
{\sc F.~Natterer}, {\em The Mathematics of Computerized Tomography}, vol.~32 of
  Classics in Applied Mathematics, SIAM, Philadelphia, 2001.

\bibitem{natterer2012photo}
{\sc F.~Natterer}, {\em Photo-acoustic inversion in convex domains}, Inverse
  Probl. Imaging, 6 (2012), pp.~315--320.

\bibitem{nguyen2009family}
{\sc L.~V. Nguyen}, {\em A family of inversion formulas for thermoacoustic
  tomography}, Inverse Probl., 3 (2009), pp.~649--675.

\bibitem{nguyen2016dissipative}
{\sc L.~V. Nguyen and L.~A. Kunyansky}, {\em A dissipative time reversal
  technique for photoacoustic tomography in a cavity}, SIAM J. Imaging Sci., 9
  (2016), pp.~748--769.

\bibitem{nilchian2015optimized}
{\sc M.~Nilchian, J.~P. Ward, C.~Vonesch, and M.~Unser}, {\em Optimized
  kaiser--bessel window functions for computed tomography}, {IEEE} Trans. Image
  Process., 24 (2015), pp.~3826--3833.

\bibitem{ntziachristos2005looking}
{\sc V.~Ntziachristos, J.~Ripoll, L.~V. Wang, and R.~Weissleder}, {\em Looking
  and listening to light: the evolution of whole-body photonic imaging}, Nat.
  Biotechnol., 23 (2005), pp.~313--320.

\bibitem{palamodov2012uniform}
{\sc V.~P. Palamodov}, {\em A uniform reconstruction formula in integral
  geometry}, Inverse Probl., 28 (2012), p.~065014.

\bibitem{PalNusHalBur07b}
{\sc G.~Paltauf, R.~Nuster, M.~Haltmeier, and P.~Burgholzer}, {\em Experimental
  evaluation of reconstruction algorithms for limited view photoacoustic
  tomography with line detectors}, Inverse Probl., 23 (2007), pp.~S81--S94.

\bibitem{PalViaPraJac02}
{\sc G.~Paltauf, J.~A. Viator, S.~A. Prahl, and S.~L. Jacques}, {\em Iterative
  reconstruction algorithm for optoacoustic imaging}, J. Opt. Soc. Am., 112
  (2002), pp.~1536--1544.

\bibitem{rieder2000approximate}
{\sc A.~Rieder and T.~Schuster}, {\em The approximate inverse in action with an
  application to computerized tomography}, SIAM J. Num. Anal., 37 (2000),
  pp.~1909--1929.

\bibitem{rieder2004approximate}
{\sc A.~Rieder and T.~Schuster}, {\em The approximate inverse in action {III}:
  {3D}-{D}oppler tomography}, Num. Math., 97 (2004), pp.~353--378.

\bibitem{RoiEtAl14}
{\sc H.~Roitner, M.~Haltmeier, R.~Nuster, D.~P. O'Leary, T.~Berer, G.~Paltauf,
  H.~Gr{\"u}n, and P.~Burgholzer}, {\em Deblurring algorithms accounting for
  the finite detector size in photoacoustic tomography}, J. Biomed. Opt., 19
  (2014), p.~056011.

\bibitem{RosNtzRaz13}
{\sc A.~Rosenthal, V.~Ntziachristos, and D.~Razansky}, {\em Acoustic inversion
  in optoacoustic tomography: A review}, Curr. Med. Imaging Rev., 9 (2013),
  p.~318.

\bibitem{salman14inversion}
{\sc Y.~Salman}, {\em An inversion formula for the spherical mean transform
  with data on an ellipsoid in two and three dimensions}, J. Math. Anal. Appl.,
  420 (2014), pp.~612--620.

\bibitem{Treeby10}
{\sc B.~E. Treeby and B.~T. Cox}, {\em k-wave: Matlab toolbox for the
  simulation and reconstruction of photoacoustic wave-fields}, J. Biomed. Opt.,
  15 (2010), p.~021314.

\bibitem{wang2011imaging}
{\sc K.~Wang, S.~A. Ermilov, R.~Su, H.~Brecht, A.~A. Oraevsky, and M.~A.
  Anastasio}, {\em An imaging model incorporating ultrasonic transducer
  properties for three-dimensional optoacoustic tomography}, IEEE Trans. Med.
  Imag., 30 (2011), pp.~203--214.

\bibitem{wang2014discrete}
{\sc K.~Wang, R.~W. Schoonover, R.~Su, A.~Oraevsky, and M.~A. Anastasio}, {\em
  Discrete imaging models for three-dimensional optoacoustic tomography using
  radially symmetric expansion functions}, IEEE Trans. Med. Imag., 33 (2014),
  pp.~1180--1193.

\bibitem{wang2012investigation}
{\sc K.~Wang, R.~Su, A.~A. Oraevsky, and M.~A. Anastasio}, {\em Investigation
  of iterative image reconstruction in three-dimensional optoacoustic
  tomography}, Phys. Med. Biol., 57 (2012), p.~5399.

\bibitem{wang2012photoacoustic}
{\sc L.~V. Wang and S.~Hu}, {\em Photoacoustic tomography: in vivo imaging from
  organelles to organs}, Science, 335 (2012), pp.~1458--1462.

\bibitem{xu2002timedomain}
{\sc M.~Xu and L.~V. Wang}, {\em Time-domain reconstruction for thermoacoustic
  tomography in a spherical geometry}, IEEE Trans. Med. Imag., 21 (2002),
  pp.~814--822.

\bibitem{XuWan03}
{\sc M.~Xu and L.~V. Wang}, {\em Analytic explanation of spatial resolution
  related to bandwidth and detector aperture size in thermoacoustic or
  photoacoustic reconstruction}, Phys. Rev. E, 67 (2003), pp.~0566051--05660515
  (electronic).

\bibitem{xu2005universal}
{\sc M.~Xu and L.~V. Wang}, {\em Universal back-projection algorithm for
  photoacoustic computed tomography}, Phys. Rev. E, 71 (2005), p.~016706.

\bibitem{zhang2009effects}
{\sc J.~Zhang, M.~A. Anastasio, P.~J. La~Rivi{\`e}re, and L.~V. Wang}, {\em
  Effects of different imaging models on least-squares image reconstruction
  accuracy in photoacoustic tomography}, IEEE Trans. Med. Imag., 28 (2009),
  pp.~1781--1790.

\end{thebibliography}
\end{document}